\definecolor{CPG}{rgb}{0,0.6,0}
\newtheorem{theorem}{Theorem}[section]
\newtheorem{lemma}[theorem]{Lemma}
\newtheorem{proposition}[theorem]{Proposition}
\newtheorem{corollary}[theorem]{Corollary}
\newtheorem{definition}{Definition}[section]
\newtheorem{asp}{Assumption}
\theoremstyle{remark}
\newtheorem{remark}{Remark}[section]
\numberwithin{equation}{section}
\g@addto@macro\normalsize{%
  \setlength\abovedisplayskip{5pt}
  \setlength\belowdisplayskip{5pt}
  \setlength\abovedisplayshortskip{5pt}
  \setlength\belowdisplayshortskip{5pt}
}
\newcommand{\R}{\mathbb{R}}
\newcommand{\N}{\mathbb{N}}
\newcommand{\cA}{\mathcal{A}}
\newcommand{\cB}{\mathcal{B}}
\newcommand{\cD}{\mathcal{D}}
\newcommand{\cE}{\mathcal{E}}
\newcommand{\cI}{\mathcal{I}}
\newcommand{\cH}{\mathcal{H}}
\newcommand{\cN}{\mathcal{N}}
\newcommand{\cM}{\mathcal{M}}
\newcommand{\mbf}[1]{\mathbf{#1}}
\newcommand{\dual}[2]{\langle#1,#2\rangle}
\newcommand{\abs}[1]{\left|#1\right|}
\newcommand{\lot}[2]{\underset{#1}{\mathrm{l.o.t.}}\{#2\}}
\title{
Attractors for  Second Order in Time  Non-Conservative Dynamics with Nonlinear Damping.}
\author{
	I.~Lasiecka\footnote{University of Memphis, Memphis, Tennessee} \\ email \href{mailto:lasiecka@memphis.edu}{lasiecka@memphis.edu} \and J.H.~Rodrigues\footnote{Okinawa Institute of Science and Technology, Japan} \\ email \href{mailto:jose-rodrigues@oist.jp}{jose-rodrigues@oist.jp}\and M.~Roy\footnote{North Carolina State University, Raleigh, North Carolina} \\ email \href{mailto:mroy5@ncsu.edu}{mroy5@ncsu.edu}
}
\date{\today
}
\theoremstyle{remark}
\begin{document}

\maketitle

\begin{abstract}
A long-time behavior of solutions to a nonlinear plate model subject to non-conservative  and  non-dissipative effects and nonlinear damping is considered. The model under study is a prototype for a suspension bridge under the effects of unstable flow of gas. To counteract the unwanted oscillations a damping mechanism of a nonlinear nature is applied.  From the point of view of nonlinear PDEs, we are dealing with a non-dissipative and nonlinear   second order in time dynamical system of hyperbolic nature  subjected to nonlinear damping. One of the first goals is to establish {\it ultimate  dissipativity}  of all  solutions, which will imply  an  existence of a  {\it weak attractor}. The combined effects of non-dissipative forcing with nonlinear damping-leading to an overdamping-give rise to  major challenges in proving an existence of an absorbing set. Known methods based on equipartition of the energy do not suffice. A rather general novel methodology based on ``barrier's'' method  will be developed to address this and related problems. Ultimately, it will be shown that a weak attractor becomes strong, and the nonlinear PDE system has a coherent finite-dimensional asymptotic behavior. 
\end{abstract}
{\bf Keywords:}  Long time behavior, nonlinear plates dynamics, suspension bridges,  non-dissipative dynamical systems,  global and  finite dimensional  attractors. \hfill \break
	
\noindent{\bf 2010 MSC codes};  34B41, 34D05, 34D40, 34D45, 34H05, 35B35, 35B37, 35B40, 35B37, 74H40.  

\section{Introduction.}
{\bf  Physical Motivation}: One of the major problems in engineering and applied mathematics is the stability of civil structures such as bridges and buildings. 
Wind or flow of gas  can trigger sustained oscillations  which can lead  to a loss  of structural integrity. 
 This phenomenon is often referred to as ``flutter'' \cite{AGAR, dowell,dowell1}. The importance of this problem cannot be overstated, and examples are ubiquitous. The collapse of the Tacoma bridge in 1940 is just one prominent and eloquent example. The effects of torsional motion due to the fluttering effects caused by a strong wind led to a formidable collapse of the structure. The analysis of the models of suspension bridges and their real-life applications dates back to Timoshenko's work \cite{Tim43}. An analysis of the collapse of the Tacoma Narrows bridge in 1940,  \cite{GanderKwok} and references therein, brought up front a  realization that a reliable model for suspension bridges should be nonlinear with a  sufficient degree of freedom to exhibit torsional oscillations \cite{McKenna}. The first models introduced were based on one dimensional representation \cite{LM90,CM97,CJM91}, with the standard Dirichlet/hinged boundary conditions. However, these models fail to represent torsional oscillations in  the structures under consideration. 
%Numerical results discussing the nature of the collapse can be found in \cite{GP11}, also see \cite{CFN19,GWP18} for related results. 
References  \cite{Gazzola16,Gazzola15} were the first  ones to introduce the modeling of suspension bridges with hinged-free boundary conditions. These boundary conditions can be considered as the most relevant for suspension bridges as 
highlighted by B. Rayleigh in \textit{The Theory of Sound}: ``The problem of a rectangular plate, whose edges are free, is one of great difficulty, and has for the most part resisted attack.'' This statement captures the challenges associated with free boundary conditions, as further discussed in \cite{GanderKwok}. 

%As mentioned earlier, in  \cite{BGLW22} plate equation which takes into account  a) the impact of axial force on the vibration of hinged bars; see \cite{Euler-Bernoulli} % this introduces the (alpha, delta) term 
%b) effect of non-dissipative force caused by strong wind is considered.
\ifdefined\xxxxxx
Hence, the problem of modeling and controlling the phenomenon has attracted a lot of attention  in the literature which was followed by the study of more accurate models. There are several different models \cite{DHMN03_9GAZ, FGIM24_GAZ10,GG_GAZ12,GAZ_14,PZ08_GAZ27} in the literature, depending on the physical phenomena involved. We also refer to \cite{BGLW22} for a recent survey of results with relevant mathematical analysis. 
\fi

Starting with \cite{Rocard}, and followed by a series of highly influential papers and books \cite{Gazzola015, Gazzola17, Gazzola16, Gazzola15, GAZ_14}, it has been agreed that a relevant model for a suspension bridge is governed by a nonlinear thin plate with hinged/free boundary conditions. This agrees with numerical and experimental studies which  provide an evidence for a correct description of vibrations resulting from external sources [such as a flow of gas, wind] \cite{DHMN03_9GAZ,GG_GAZ12,GAZ_14}.

\ifdefined\xxxx
One of the main forms of long-span bridge is a suspension bridge. Torsional forces exert strong pressure on the structure and cause fatigue of the material.
% At present, almost all bridges that are longer than one thousand meters are mainly suspension bridges. With increasing span, the suspension bridges tend to oscillate more, and the geometric nonlinear characteristics of the system become more prominent. 
This eventually affects the dynamical stability, causing safety concerns. In order to accurately grasp the degradation and evolution law of the performance of a suspension bridge over its entire life cycle, it is necessary to accurately understand and describe its dynamical response. 
\fi

An essential point  is that the model under consideration should account for the effects of {\it unstable gas}. These are precisely the effects that are reinforced by the {\it non-linearity } of the system and the coupling between vertical and torsional displacements \cite{McKenna,Radu} which often leads to  a  degradation of the material. \hfill \break

 \noindent{\bf PDE model:} The model that motivates the present study is de facto a coupled model that involves a linearised  Euler equation around unstable equilibrium  and non-linear plate equations interacting at the interface \cite{dowell,dowell2,ChuLas10}. However, due to  Huygen's principle, it is possible to decouple the flow of gas from structural dynamics, which then results in a nonlinear elastic model influenced by gas flowing in a given  direction-say  $y$-causing structural stress. Such a model is derived in \cite{boutet} as valid for a sufficiently long time [related to the speed of propagation]. This motivates the present work where structural dynamics is perturbed by a  {\it non-conservative} and  {\it non-dissipative} feedback  force-see  $\beta \ne 0 $ in the model below. In fact, this term, seemingly very simple,  is the game changer in mathematical  analysis, rendering the system nonconservative, non-disssipative without a  gradient structure \cite{FerrGaz16}.  \\
Let $l>0$ and define $\Omega = (0,\pi)\times(-l,l)$.  The boundary of $\Omega$ consists of two parts defined on the same submanifold $\Gamma$ consisting of  $\Gamma_0$ and $\Gamma_1$  where $\Gamma_0 = [0,\pi] \times \{-l,l\} $ and $\Gamma_1 = \{0,\pi \}\times [-l,l] $. The norm in $L^2(\Omega)$ will be denoted by $\|\cdot\|_0$. For $T>0$, we consider the following extensible suspension--bridge type equation
\begin{equation}
    u_{tt} + \Delta^2 u +
    % \kappa u^+ + 
    (\alpha - \delta\|u_x\|_0^2)u_{xx} + g(\|u_t\|_0)u_t 
    + f(u) + \beta u_y = 0,\quad\mbox{in}~Q^T:=(0,T)\times\Omega, \label{sbe}
\end{equation}
with {\it hinged}  boundary conditions on $\Gamma_1$ and {\it free}  on $\Gamma_0$. These  describe,  respectively, bending moments and shear forces -see \cite{BGLW22}. 
\begin{eqnarray}
    u =\Delta  u =0, &\mbox{on}~\Sigma_1^T:=(0,T)\times \Gamma_1,  \\
  \partial_{\nu} [ \Delta u  + (1-\sigma)  \partial_{\tau}^2 u ] = 0   &\mbox{on}~\Sigma_0^T:=(0,T)\times \Gamma_0 , \label{b2}
\end{eqnarray}
where $\nu$ (resp. $\tau$) represent normal (resp. tangential) direction to the boundary.
The initial conditions are given by 
\begin{equation}
    u(0,x) = u_0(x), \quad u_t(0,x) = u_1(x) \quad \mbox{in}~\Omega. \label{IC}
\end{equation}
In (\ref{sbe}),  $\alpha\! \in \!\R_+$ accounts for longitudinal prestressing [axial force], $\delta\!\geq\!0$ indicates the strength of restoring force resulting from stretching in the $x$-direction, $\beta\! \in \!\R$ is a flow parameter [in the $y$-direction] indicating the presence of   {\it non-conservative feedback  force}  affecting the plate, $f$ is a continuous function (in $\R$) representing an internal  restoring force  and $g(||u_t||_0)u_t  $  describes damping mechanism  which is {\it nonlinear, nonlocal} given by positive increasing  $ C^1(\R_+) $ function $g(s) $. 
% polynomial-type function that describes the {\it  non-local, nonlinear  damping term}, for the sake of definition, %we consider,
%\begin{equation}\label{g_func}
  %  g(s) = \sum_{j=0}^{q}b_js^{j},\quad\mbox{for $s\in\R_+$, $q\geq1$, $b_0,b_1,\dots,b_{q}\geq0$.}
%\end{equation}

The parameter $0<\sigma<1/2 $ in the boundary conditions \eqref{b2} denotes the Poisson ratio. In \eqref{sbe}, $u: Q^T \to \mathbb{R}$ represents the downward deflection of the midline [of unit length] in the vertical plane with respect to the reference configuration. Denoting $u^+(t,x) := \max\{0,u(t,x)\}$, the force $\kappa u^+$ ($\kappa$ constant) is a simplest function that models the restoring force of the stays in the suspension bridge \cite{MW87}. We define  $f(u) \equiv  \kappa u^+ +f_0 (u) $, where the source function [nonlinear]  $f_0\in C(R) $ can represent the bridge's weight per unit length or the external dynamic forces caused by passing vehicles or the wind. The nonlinear, non-local internal force describing elastic restoration with a prestrained  energy is given by ``Berger's'' type of nonlinearity  $\left(\alpha - \delta \|u_x\|^2\right) u_{xx}$,  \cite{Euler-Bernoulli}. \\

%%%%%%%%%%%%% cut and pasted this assumption from here to section 3
\begin{comment}
\noindent\textbf{Assumption f:}
Assumed that $f\in C^1$ verifies the following dissipativity type condition:
    \begin{equation}\label{aspf-1}
        \tilde{f}(s) = \int_0^s f(\tau)d\tau \geq - c s^2 - b,\quad\mbox{for some $c,b>0$.}
    \end{equation}
\begin{remark}
 A sufficient condition for \eqref{aspf-1} is the following  condition:
  %  \begin{equation}\label{aspf-2}
      $  \liminf_{|s|\to\infty}\frac{f(s)}{s} > -\infty.    $
  %  \end{equation}
\end{remark}
\end{comment}
%%%%%%%%%%%%%%%%%%%%%%%%%%%%%%

\noindent{\bf Comments about the model}:\\
%\begin{itemize}
%\item
1. A general model  (\ref{sbe})  with {\it $\beta =0$  and  with  Dirichlet or hinged boundary conditions } has been  investigated by several authors including  \cite{Aou21, MR4468373, Messaoudi22, LM90, Soh23,MZ05,YZ08,PK11,ZMS07}.  In fact, for  this {\it conservative and  dissipative} case there is an arsenal  of tools  in the area of dynamical systems \cite{MIRANVILLE} which can be used successfully. The non-conservative  case  $\beta \in R $,  with hinged/free boundary conditions and  a {\it linear} damping, $g(s) =constant$ has been studied only  recently in \cite{BGLW22}.  However, an important question: {\it how do  solutions of the model respond  to  unstability  caused by  the {\it  non-conservative} effects in  a presence of  {\it nonlinear}  over-damping, {\it has been unanswered.}} And this is the focus of the present paper
%Why this question is important from both, physical and mathematical point of view, will be elaborated  in sect 2.1.  
with dominant features   which distinguish this work from  the others, as follows: 
 { \textit{(i) free boundary conditions}}  on the long edge of the bridge, { \textit{(ii) the effect of a  non-conservative force  leading  to the loss of  dissipativity  and of a gradient structure}} along with  {\textit {(iii) a nonlinear, possibly degenerate effect  [$g(0) =0$]} of the damping}. These features, while motivated by the physics of the problem, provide for a  substantial new challenge from the point of view of the   mathematical analysis-as explained later. \\
2. The  feedback force  $\beta u_y $  is a  cause for the   loss of stability and promotes fluttering behavior. From the mathematical perspective, this term leads to the so called non-dissipative and non-conservative  dynamics for the resulting dynamical system. It is a major player  and a game changer. This term destabilizes the linearized dynamics. It should be noted that the model arises from the analysis of flow structure interaction where, over a long period, the effects of the 3-D flow  of gas translate into the term $(-u_t - U u_y )$, where $U$ is the normalized Mach number [determined by the local speed of the wind] \cite{boutet}. Thus,  the linear analysis would lead to a Hopf bifurcation at the onset of the fluttering speed. The nonlinear effects modulate oscillations which translate into sustained oscillations--potentially chaotic \cite{BGLW22, HTW18,dowell1,LasWebBal}. Thus, the presence of destabilizing effect $\beta u_y$ can not be underscored.

\ifdefined\xxxx
 However, realistic models exhibit nonlinearity of dissipation which is related to a ``stiffening'' of the plate.  It is also  known that {\it nonlinearity} of the damping often leads to the so called ``stability paradox''. The stronger the damping, does not translate into a higher level of stability. This phenomenon is referred to as  an ``over-damping'' which affects the equipartition of the energy. The issue is particularly sensitive in hyperbolic like dynamics where instability of an undamped system is of infinite-dimensional nature [in contrast to parabolic systems]. Various tools have been developed to handle over-damping. This includes ``epsilon'' Gronwall's inequality \cite{pata} -an elegant way of using Gronwall's inequality with a small parameter, or a contradiction type of arguments related to the so called ``barrier's method'' \cite{haraux,haraux1,haraux2,Memoir} used primarily in second order in time  hyperbolic dynamics. However, a combination of the ``over-damping'' with a lack of dissipativity and of gradient structure  makes the situation difficult. In view of this, the \textit{nonlinearity} of  dissipative mechanism and a  presence of {\it non-conservative force} provides for the analytical-mathematical challenge of the present paper. It should be also noted that  typical ways of handling non-dissipative/non-conservative  effects in the dynamics is either to assume suitable ``smallness''  of non-conservative term -\cite{Memoir} and references within- or to offset the undesirable effects by introducing into  a model a {\it strictly superlinear} force \cite{bociu}. Neither of these helping features appear in the present work where long time dynamics is studied  for a ``pure model''. Careful analysis of equipartition of energy and the effect of the over-damping on potential energy is the key player. This leads to a development of a new methodology, also inspired by \cite{haraux,haraux2},  within the area of dynamical systems.  The relevant result is formulated in an abstract form Theorem \ref{0.5.1}, so it could be applied within a much more general context of non-dissipative, non-conservative second order in time  PDE dynamics. 
\fi
In summary: the overriding goal of the paper is to investigate long time dynamics of the system generated by \eqref{sbe}--\eqref{IC}. This includes: well-posedness of solutions with bounded orbits, the existence of {\it weak and strong} attractors with  their properties. In particular, we shall show that in the case of non-degenerate damping at rest [$g(0) > 0$], the ultimate dynamics is finite-dimensional, with the possibility of chaotic behavior. This implies that the hyperbolic  nature of PDE dynamics is reduced asymptotically to a nonlinear ODE.

\section{Assumptions and Functional Settings.}\label{sec:Pre}
\subsection{Assumptions.}
The analysis of this paper relies on the following assumptions regarding the source and damping term.
\noindent\textbf{Assumption (f):}
Assumed that $f_0\in C^1$ verifies the following dissipativity type condition:
    \begin{equation}\label{aspf-1}
        \tilde{f}_0(s) = \int_0^s f_0(\tau)d\tau \geq - c s^2 - b,\quad\mbox{for some $c,b>0$.}
    \end{equation}
\begin{remark}
 A sufficient condition for \eqref{aspf-1} is the following  condition:
  %  \begin{equation}\label{aspf-2}
      $  \liminf_{|s|\to\infty}\frac{f_0(s)}{s} > -\infty.    $
  %  \end{equation}
\end{remark}

\noindent\textbf{Assumption (g):}
For the long time behavior, in order to control the orbits asymptotically in time, we shall resort to the following, rather simple assumption which, however, encompasses the main scenarios of interest such as superlinear damping [over-damping] at the infinity and the origin, as well as the damping with controlled behavior at the origin.
 \begin{equation}\label{aspg} 
 g(s) = b_0 + b_q s^q,~ b_0,~ b_q \geq 0 ~and ~b_0 + b_q \ne 0, q \geq 1.
 \end{equation}
\begin{remark}
 The assumption $g$ is a canonical form of nonlinear damping that can be easily generalized to other forms like $  m_0 s^q  \leq g(s)  - b_0 \leq M s^q, s>0$, or to the form $g(s)= \sum_{j=0}^q s^j , s>0 $-considered in the past \cite{Aou21}.  Most importantly, the overdamping case is present when
 $ b_q >0 $ and  damping controlled linearly from below  [$g(0) > 0$]  is manifested by the condition
   $b_0 > 0$. 
 This allows in-depth discussion of effects of the damping at the origin and infinity.
 \end{remark}
 %%%%%%%%%%%%%%%%%%
 \ifdefined\xxxxxxxx
\subsection{Definitions.} Having in mind the theory of dynamical systems governed  by evolution $S_t$  which is defined on a Hilbert space $\cH $ , the existence of global attractors, which describes the asymptotics of solutions to \eqref{sbe}-\eqref{IC}, can be established by means of two basic concepts, namely [1]. \textit{ultimate dissipativity} and [2]. \textit{asymptotic smoothness/compactness} of the evolution operator. For the reader's convenience, we recall the definitions below.
\begin{definition}[\bf Absorbing set]\label{def:Diss}
    A closed set $B\subset\cH$ is said to be \textit{absorbing} for $S_t$ if for any bounded set $B_0\subset\cH$, there exists $t_0=t_0(B_0)>0$ such that $S_t{B_0}\subset B$ for every $t\geq t_0$. Moreover, the evolution operator $S_t$ is said to be (bounded) \textit{dissipative} if it possesses a bounded absorbing set $B\subset\cH$.
\end{definition}
\begin{definition}[\bf Asymptotic smoothness]\label{def:AS}
	The evolution operator $S_t$ is said to be \textit{asymptotically smooth} if the following condition holds: for every bounded set $B_0\subset\cH$ such that $S_tB_0\subset B_0 $ for $t>0$ there exists a compact set $K\subset\overline{B_0}$ such that $S_tB_0\subset B_0$ converges uniformly to $K$ in the sense that
 \begin{equation*}
     \displaystyle\lim_{t\to\infty}\sup_{\mbf{y}\in B_0}dist_\cH(S_t\mbf{y},K)=0.
 \end{equation*}
\end{definition}
\begin{definition}[\bf Quasi-stability]\label{def:QS}
    The dynamical system $\{\cH,S_t\}$ is said to be \textbf{quasi-stable} on a set $B\subset\cH$, if there exists $t_\ast>0$, a Banach space $Z$, a globally Lipschitz mapping $K:B\to Z$ and a compact seminorm $n_Z$ on $Z$, such that for every $\mbf{y}_1,\mbf{y}_2\in B$ with $0\leq p<1$, we have
                \begin{equation*}
                           \|S_{t_\ast}\mbf{y}_1-S_{t_\ast}\mbf{y}_2\|_\cH
                          \leq p\|\mbf{y}_1-\mbf{y}_2\|_\cH + \textrm{n}_Z(K\mbf{y}_1-K\mbf{y}_2).
                \end{equation*}
\end{definition}
Quasi-stability is a powerful tool  used for characterization of   additional properties of attractors such as smoothness, dimensionality and  exponential attraction \cite{Memoir,LasMaMo} to be discussed later.
%The definitions introduced here will be helpful for understanding the theorem presented in Subsection \ref{prfAtt}. Familiarity with these definitions will make it easier to follow the proof and grasp the underlying concepts.
 %%%%%%%%%%%%
 \fi
 \subsection{Functional settings.}
\noindent We begin with the notation that will be used throughout. \\
$(\cdot, \cdot) \equiv ( \cdot, \cdot)_{L_2(\Omega)} $, $||u||_s \equiv ||u||_{H^s(\Omega) } $, where $H^s(\Omega) $ is a standard Sobolev space of order $s\in R $.

Following \cite{BGLW22}, we  introduce the  space:  $H^2_\ast = \left\{ u\in H^2(\Omega);~ u=0~\mbox{on}~\{0,\pi\}\times[-l,l] \right\}$, endowed with topology $\|u\|_{2,\ast}^2 = a(u,u)$, for $u\in H^2_\ast$ (equivalent to the usual Sobolev norm  $\|.\|_2$, where the scalar product $a(\cdot,\cdot)$ is given by
\begin{equation}
    \label{bifa} a(u,v) = \int_\Omega\left[ \Delta{u}\Delta{v} - (1-\sigma)\left(u_{xx}v_{yy} + u_{yy}v_{xx} - 2u_{xy}v_{xy} \right) \right] dx dy,\quad\mbox{for}~u,v\in H^2_\ast.
\end{equation}

Using this notation, we define a positive, self-adjoint operator $A: \cD(A) \subset L^2(\Omega) \to L^2(\Omega)$  equipped with {\it free/hinged}  boundary conditions: 
\begin{equation*}\begin{aligned}
    &A{u} = \Delta^2{u},  ~for~ u\in \cD(A) \mbox{ where}\\
    &\cD(A):= \!\left\{u \in  H^2_\ast, \Delta^2u\in L_2(\Omega) 
    \left|\begin{array}{cl} 
      \Delta   u = 0 & \mbox{on $ \Gamma_1 $ } \\
        u_{yy} + \sigma u_{xx}=0 = u_{yyy} + (2-\sigma)u_{xxy} & \mbox{on $ \Gamma_0 $}
    \end{array}\right.\!
    \right\}
\end{aligned}\end{equation*}
Due to specific configuration of the boundary conditions, one has $\cD(A) \subset H^4(\Omega)$, \cite{BGLW22,Gazzola15}.

The operator $A$ can be extended to a closed, positive self-adjoint operator as \newline $A:H^2_\ast\to(H^2_\ast)'$ by means of the following characterization
$a(u,v)=\dual{Au}{v},\mbox{ for}~u,v\in H^2_\ast.$\\
one also obtains $||u||_{\cD(A^{1/2} )}^2 = a(u,u) , u \in H^2_\ast $.\\
The nonlocal damping $g(||v||) v $  leads to a nonlinear operator  $D: \cD(A^{1/2}) \to L^2(\Omega)$, which is monotone and   expressed as $D{v}=g(\|v\|_0)v$ for every $v\in L^2(\Omega)$. Finally, the source operator is denoted by $F:H^2_\ast\to L^2(\Omega)$ with the following action
        \begin{equation}\label{F}
              F{u}=-\left[ (\alpha-\delta\|u_x\|_0^2)u_{xx} + f(u) + \beta u_y\right],\quad \mbox{for every } u\in H^2_\ast.
        \end{equation}
Note that $F$ has the form $F(u):=-\Pi^{\prime}(u)+N(u)$, with  non-conservative part $N(u) =-\beta u_y$ and conservative part  $\Pi(u)$, where 
 $\Pi'$  stands for Frechet derivative of the energy functional $\Pi:H^2_\ast\to\R$  given by:
        \begin{equation}\label{pi}
               \Pi(u) = \frac{\kappa}{2}\|u^+\|_0^2 - \frac{\alpha}{2}\|u_x\|_0^2+\frac{\delta}{4}\|u_x\|_0^4  + \int_\Omega \tilde{f_0}(u(x))dx \quad\mbox{for}~u\in H^2_\ast,
        \end{equation}
%where $\tilde{f}$ denotes an antiderivative of $f$. 
%In the case of  our bridge model, the energy relation will take the form given in (\ref{nrg_id0}).
    With  the \textit{phase space} $\cH \equiv H^2_\ast\times L^2(\Omega)$ under  the norm 
$$\|\{u,v\}\|_\cH^2 = \|u\|_{2,\ast}^2 + \|v\|_0^2\mbox{ for }\{u,v\}\in\cH,$$ 
for each $\mbf{u}=\{u,v\}\in\cH$ the \textit{energy functional} $\cE:\cH\to\R$ associated with \eqref{sbe} is given by
        \begin{align}
              \cE(\mbf{u}) =\frac{1}{2}\left[\|u\|_{2,\ast}^2 + \|v\|_0^2 \right]+\Pi(u)= \frac{1}{2}\|\mbf{u}\|_{\cH}^2+\Pi(u). \label{nrg}
        \end{align}
%The operator $\Pi$ is  given by the expression \eqref{pi}.
 By {\it formal} calculations one obtains the following  {\it energy equality}, 
 \begin{align}
        \cE(\mbf{u}(t)) + \int_s^t g(\|u_t\|_0)\|u_t\|_0^2
        = \cE(\mbf{u}(s)) 
        - \beta\int_s^t\!\!\int_\Omega u_y u_t,\quad\mbox{for}~s\leq t. \label{nrg_id0}
    \end{align}
which displays  the presence of the non-conservative feedback force and the lack of dissipativity,

We shall show next that the system under consideration \eqref{sbe}-\eqref{IC} can be embedded into a class of more general 
abstract second order [in time]  {\it non-conservative and  non-dissipative } form.
With  $A$ described as before, $D: \cD(A^{1/2} ) \rightarrow \cD(A^{1/2})' $ monotone increasing, hemicontinuous  nonlinear operator,  $\Pi:\cD(A^{1/2} ) \rightarrow R $, a  $C^1$  functional and  nonconservative term $N : \cD(A^{1/2} )\rightarrow L_2(\Omega)$.
%\\ With $\mbf{u}(t) \in \cH \equiv  D(A^{1/2}) \times L_2(\Omega) $ 
consider
        \begin{align}\label{sbe_abs} 
         \begin{cases}
               &u_{tt}(t) + A{u(t)} + D(u_t(t)) = F(u(t))=-\Pi^{\prime}(u)+N(u),\quad t>0, \\
               &\{u(0),u_t(0)\} = \{u_0,u_1\} \in \cH  \equiv  \cD(A^{1/2}) \times L_2(\Omega) 
         \end{cases}
        \end{align}    
        For the above abstract model one can formally write down  the energy relation in the form 
        \begin{equation}\label{Enf}
        \cE(\mbf{u}(t) ) + \int_s^t (D(u_t) , u_t) = \cE(\mbf{u}(s) ) + \int_s^t (N( u),u_t)
        \end{equation}
        with  the total [potenial and kinetic] energy :
        \begin{equation}\label{Enp} 
         \cE(\mbf{u}(t) ) = \frac{1}{2}  ||A^{1/2} u||_0^2  + \Pi(u) + \frac{1}{2}||u_t||_0^2.
         \end{equation}
    Thus,  long time behavior of the problem addressed  above amounts to showing that  the solutions of (\ref{sbe_abs} ) generate a dynamical system on  the phase space $\cH$  which admits  a global attractor with  the desired [eg smoothness, finite-dimensionality) properties.  This will allow to reduce asymptotically the abstract dynamics  (\ref{sbe_abs}) to an ODE. 
    \ifdefined\xxxxxxx
   % In the case of  our bridge model, the energy relation will take the form given in (\ref{nrg_id0}).
    With  the \textit{phase space} $\cH = H^2_\ast\times L^2(\Omega)$ equipped with the norm 
$$\|\{u,v\}\|_\cH^2 = \|u\|_{2,\ast}^2 + \|v\|_0^2\mbox{ for }\{u,v\}\in\cH,$$ 
for each $\mbf{u}=\{u,v\}\in\cH$ the \textit{energy functional} $\cE:\cH\to\R$ associated with \eqref{sbe_abs} is given by
        \begin{align}
              \cE(\mbf{u}) =\frac{1}{2}\left[\|u\|_{2,\ast}^2 + \|v\|_0^2 \right]+\Pi(u)= \frac{1}{2}\|\mbf{u}\|_{\cH}^2+\Pi(u). \label{nrg}
        \end{align}
The operator $\Pi$ is given by the expression \eqref{pi}. By formal calculations one obtains the following equality. 
 \begin{align}
        \cE(\mbf{u}(t)) + \int_s^t g(\|u_t\|_0)\|u_t\|_0^2
        = \cE(\mbf{u}(s)) 
        - \beta\int_s^t\!\!\int_\Omega u_y u_t,\quad\mbox{for}~s\leq t. \label{nrg_id0}
    \end{align}
which displays the lack of dissipativity,
%XXXXXXX

\ifdefined\xxxxx
 can be expressed as $\Pi=\Pi_0+\Pi_1$; where
       \begin{align}
            &\Pi_0(u) = \frac{\kappa}{2}\|u^+\|_0^2 - \frac{\alpha}{2}\|u_x\|_0^2 + \frac{\delta}{4}\|u_x\|_0^4 + \int_\Omega\tilde{f}(u)+ c\|u\|_0^2+(b|\Omega|+\frac{\alpha^2}{4}), \label{pie0}\\
            &\Pi_1(u) = -c\|u\|_0^2-(b|\Omega|+\alpha^2/4)\quad \mbox{where $b,c$ are suitable constants.}
      \end{align}
Hence, the positive part of the energy $E(\mbf u)$ and the total energy $\cE(\mbf u)$ can be reformulated as,
       \begin{align}
               & E(\mbf{u})= \frac{1}{2}\|\mbf{u}\|_{\cH}^2+\Pi_0(u), \quad
               \quad \mbox{and} \quad \quad \cE(\mbf{u}) =E(\mbf{u})+\Pi_1(u). \notag \label{nrg}
       \end{align}
Later on we shall also consider linear part of the energy denoted by $$\tilde{E}(\mbf u)=\frac{1}{2}\|\mbf{u}\|_{\cH}^2=\frac{1}{2}\left[\|u\|_{2,\ast}^2 + \|v\|_0^2 \right].$$
\fi 
%XXXXXXX
By formal calculations one obtains the following equality. 
 \begin{align}
        \cE(\mbf{u}(t)) + \int_s^t g(\|u_t\|_0)\|u_t\|_0^2
        = \cE(\mbf{u}(s)) 
        - \beta\int_s^t\!\!\int_\Omega u_y u_t,\quad\mbox{for}~s\leq t. \label{nrg_id0}
    \end{align}
which displays the lack of dissipativity, 
\fi   

\ifdefined\xxxxxxxx
 \subsection{Why is the problem interesting?}

   This becomes evident when we examine the abstract model presented in Section~\ref{sbe_abs}: \\
    {\bf 1. } A  very first step: {\it wellposedness } theory for finite energy solutions  depends on  the bounds obtained for the  energy function. In  our particular  case this amounts to showing that non-positive part of the {\bf  energy has a lower bound} ie:  $\Pi(u) $ is bounded from below when $||\mbf(u) ||_{\cH} \rightarrow \infty$. Such property typically depends on the ``strength'' of nonlinear term, hence depends on the more specific properties of the dynamics. We will show that in the case of the bridge model nonlinear elastic energy provides such mechanism-see Proposition \ref{P:1}. \\
  {\bf   2.} When considering a  long time behavior, {\bf boundedness in time} of solutions is a mandatory necessary ingredient. This property is typically deduced from energy relation. However, in the present case the nonconservative part of the dynamics  -term $F^* $-  precludes such conclusion. 
    In the case of the bridge model-this is the term with $\beta\in R $  responsible for instability of the system.
    This issue has been considered in the past literature by ``neutralizing'' the lack of dissipativity with ``large'' damping parameter \cite{Memoir}, or by assuming that the dissipation is bounded linearly \cite{BGLW22}, or by adding to potential energy strongly superlinear and coercive  term \cite{bociu}. 
    Neither of the above ``fixes'' will be considered in the present paper. \\
 {\bf  3.}  In order to assert existence of attracting sets,  the {\bf  ultimate bound for the orbits}  is necessary. And here, again, the lack of dissipativity becomes  the most serious obstacle. The system has no  {\bf gradient} structure.  To contend, we shall develop an abstract method which will resolve the issue of proving {\bf absorbing property}  for the dynamics without  the compromise of additional assumptions as specified in point 2-see  Theorem \ref{0.5.1}  and  Corollary \ref{ball} . \\
{\bf  4.} An existence of attractors requires convergence of the  orbits. This is connected to {\bf compactness}. 
    Due to the  hyperbolicity of dynamics, there is no natural smoothing mechanism - contrary to parabolic problems. On the other hand, the nonlinear terms are of critical nature [with respect to Sobolev's embeddings]. Thus,  any sort of compactness inherited from the  topology of nonlinear terms is out of question. 
    To contend, we shall use methods of compensated compactness with particular attention paid to 
    the energy functional, convexity of the nonlinear damping  and structure of the problem which would allow to unveil ``hidden'' convergence -see Theorem \ref{Asym-sm1}.  
    \fi
    
      %xxxxxxx   
        \ifdefined\xxxxxx
Let us denote the \textit{phase space} $\cH = H^2_\ast\times L^2(\Omega)$ equipped with the norm 
$$\|\{u,v\}\|_\cH^2 = \|u\|_{2,\ast}^2 + \|v\|_0^2\mbox{ for }\{u,v\}\in\cH.$$ 
For each $\mbf{u}=\{u,v\}\in\cH$ the \textit{energy functional} $\cE:\cH\to\R$ associated with \eqref{sbe_abs} is given by
        \begin{align}
              \cE(\mbf{u}) =\frac{1}{2}\left[\|u\|_{2,\ast}^2 + \|v\|_0^2 \right]+\Pi(u)= \frac{1}{2}\|\mbf{u}\|_{\cH}^2+\Pi(u). \label{nrg}
        \end{align}
The operator $\Pi$, given by the expression \eqref{pi}, can be expressed as $\Pi=\Pi_0+\Pi_1$; where
       \begin{align}
            &\Pi_0(u) = \frac{\kappa}{2}\|u^+\|_0^2 - \frac{\alpha}{2}\|u_x\|_0^2 + \frac{\delta}{4}\|u_x\|_0^4 + \int_\Omega\tilde{f}(u)+ c\|u\|_0^2+(b|\Omega|+\frac{\alpha^2}{4}), \label{pie0}\\
            &\Pi_1(u) = -c\|u\|_0^2-(b|\Omega|+\alpha^2/4)\quad \mbox{where $b,c$ are suitable constants.}
      \end{align}
Hence the positive part of the energy $E(\mbf u)$ and the total energy $\cE(\mbf u)$ can be reformulated as,
       \begin{align}
               & E(\mbf{u})= \frac{1}{2}\|\mbf{u}\|_{\cH}^2+\Pi_0(u), \quad
               \quad \mbox{and} \quad \quad \cE(\mbf{u}) =E(\mbf{u})+\Pi_1(u). \notag \label{nrg}
       \end{align}
Later on we shall also consider linear part of the energy denoted by $$\tilde{E}(\mbf u)=\frac{1}{2}\|\mbf{u}\|_{\cH}^2=\frac{1}{2}\left[\|u\|_{2,\ast}^2 + \|v\|_0^2 \right].$$

By formal calculations one obtains the following equality. 
 \begin{align}
        \cE(\mbf{u}(t)) + \int_s^t g(\|u_t\|_0)\|u_t\|_0^2
        = \cE(\mbf{u}(s)) 
        - \beta\int_s^t\!\!\int_\Omega u_y u_t,\quad\mbox{for}~s\leq t. \label{nrg_id0}
    \end{align}
    \fi

\section{Main Results.}\label{mainresults}
The main results of this study focus on long time behavior of the dynamics, which can be characterized by attracting sets and their properties. The very first step toward this goal is to guarantee  global existence of {\it bounded in time }solutions for the associated dynamical system.
As we shall see, this will turn possible due to the nonlinearity of potential energy.
\subsection{Formulation of the results.}
\begin{definition}
    Let $T>0$. We say that $u:[0,T]\to L^2(\Omega)$ is a \textbf{strong solution} of \eqref{sbe}-\eqref{IC} on $[0,T)$ if $\mbf{u}=\{u,u_t\}\in C([0,T);\cH) \cap W^{1,1}_{loc}(0,T;\cH)$ with values $\mbf{u}(t)\in \cD(A)\times H^2_\ast$ for almost every $t \in [0,T]$, equation $\eqref{sbe_abs}$ is verified in $L^2(\Omega)$ for almost every $t\in[0,T]$ and the initial data in $\eqref{sbe_abs}$ hold.
    We say that $u$ is a \textbf{generalized solution} of \eqref{sbe}-\eqref{IC} on $[0,T]$ if $\mbf{u}=\{u,u_t\}\in C(0,T;\cH)$, the initial data in $\eqref{sbe_abs}$ hold and there exists $\{{u}^n\}_{n\in\N}$ sequence of strong solutions of \eqref{sbe}-\eqref{IC} on $[0,T)$ such that 
         \begin{equation}
                \lim_{n\to\infty}\max_{t\in[0,T]}\|\mbf{u}^n(t) - \mbf{u}(t)\|_\cH = 0.
        \end{equation}
\end{definition}

\begin{remark}
    Notice that, $\cD(A) \subset H^4(\Omega) $ \cite{BGLW22,Gazzola15}, thus if $u$ is a strong solution of \eqref{sbe}-\eqref{IC} on $[0,T)$ then $u(t)\in H^4(\Omega)\cap H^2_\ast$ for almost every $t\in[0,T]$.
\end{remark}

\noindent The wellposedness of generalised and strong solutions to \eqref{sbe}-\eqref{IC} is given below.

\begin{theorem}{\bf Wellposedness.}\label{thm:EU}
    Let $T>0$ and assume that $f_0$ satisfies \eqref{aspf-1} and $g(s) $ is monotone increasing, continuous on $\R_+$. Then, for each $\{u_0,u_1\}\in \cD(A)\times H^2_\ast$, there exists a unique \textbf{strong solution} $u$  of (\ref{sbe}) on the interval $[0,T]$ such that $\mbf{u}=\{u,u_t\}\in C(0,T;\cH)$ and $t\mapsto\mbf{u}_t(t)=\{u_t(t),u_{tt}(t)\}$ is weakly continuous in $\cH$. Moreover, 
    \begin{align}
        \mbf{u} \in L^\infty(0,T;\cD(A)\times H^2_\ast)\cap W^{1,\infty}(0,T;\cH); ~~ \cD(A) \subset H^4(\Omega)  \label{reg}
        % \\ u\in L^\infty(0,T;D(A))\cap W^{1,\infty}(0,T;H^2_\ast)\cap W^{2,\infty}(0,T;L^2(\Omega)).
    \end{align}
    In addition, the following \textbf{energy identity}
    holds:
    \begin{align}
        \dfrac{d}{dt}\cE(\mbf{u}) + g(\|u_t\|_0)\|u_t\|_0^2
        = - \beta\int_\Omega u_y u_t,\quad\mbox{for}~t>0.
    \end{align}      
    If $\{u_0,u_1\}\in\cH$, then there exists a unique \textbf{generalized solution} $u$ to \eqref{sbe}-\eqref{IC} such that $\mbf{u}=\{u,u_t\}\in C(0,T;\cH)$ which  verifies the  integral form of the {\bf energy identity} 
    \begin{align}
        \cE(\mbf{u}(t)) + \int_s^t g(\|u_t\|_0)\|u_t\|_0^2
        = \cE(\mbf{u}(s)) 
        - \beta\int_s^t\!\!\int_\Omega u_y u_t,\quad\mbox{for}~s\leq t. \label{nrg_id}
    \end{align}
    In addition, each generalised solution is  a {\it weak} solution satisfying standard variational form of equation with test functions in $H^2_\ast$.
\end{theorem}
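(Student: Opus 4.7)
The plan is to recast the problem in the abstract form (\ref{sbe_abs}) and apply the well-established theory of second order evolution equations with monotone damping and locally Lipschitz source (see, e.g., the Chueshov--Lasiecka framework in \cite{Memoir}). The key is to verify the operator-theoretic hypotheses on the phase space $\cH = \cD(A^{1/2}) \times L^2(\Omega)$. The operator $A$ is positive, self-adjoint with compact resolvent, and $H^2_\ast = \cD(A^{1/2})$. Under Assumption (g), the nonlocal damping $D(v) = g(\|v\|_0)v$ is continuous, hemicontinuous and monotone from $L^2(\Omega)$ into itself, hence maximal monotone. The source $F$ defined in (\ref{F}) is locally Lipschitz from $H^2_\ast$ into $L^2(\Omega)$: the Berger term $(\alpha - \delta\|u_x\|_0^2)u_{xx}$ is locally Lipschitz via $\|u_{xx}\|_0 \leq C\|u\|_{2,\ast}$, the piece $\kappa u^+$ is globally Lipschitz, the $C^1$ nonlinearity $f_0$ is locally Lipschitz via the 2D Sobolev embedding $H^2(\Omega) \hookrightarrow L^\infty(\Omega)$, and the non-conservative term $N(u) = -\beta u_y$ is linear and bounded from $H^2_\ast$ into $L^2(\Omega)$.

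Next, I would construct strong solutions through a Galerkin scheme based on the eigenfunctions of $A$. The projected ODE is a locally Lipschitz perturbation of a maximal monotone vector field, so classical ODE theory yields local approximate solutions $u^n$. Extension to $[0,T]$ relies on a priori bounds derived from the formal energy identity obtained by testing against $u_t^n$. Here one uses the lower bound on $\Pi$ established earlier in the paper (via Assumption (f) together with the coercivity of the Berger quartic $\frac{\delta}{4}\|u_x\|_0^4$), which shows that $\cE$ controls $\|\mbf{u}\|_\cH^2$ up to constants. The non-conservative contribution is handled by Cauchy--Schwarz, $\bigl|\beta\int\!\!\int u_y u_t\bigr| \leq C\int(\|u\|_{2,\ast}^2 + \|u_t\|_0^2)$, and Gronwall's lemma, yielding a bound $\|\mbf{u}^n(t)\|_\cH \leq C(T,\|\mbf{u}_0\|_\cH)$ on any finite interval. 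Weak-$\ast$ compactness then delivers subsequential limits; the nonlinear damping $D$ is identified through the Minty--Browder monotonicity trick, while the lower-order and Berger terms pass by compact Sobolev embeddings.

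Strong regularity $\mbf{u} \in L^\infty(0,T;\cD(A)\times H^2_\ast)$ follows from higher-order a priori estimates on $\{u_t, u_{tt}\}$ obtained by differentiating the Galerkin equation in time; once these are in place, $Au = -u_{tt} - D(u_t) + F(u) \in L^\infty(0,T;L^2(\Omega))$, so $u(t) \in \cD(A)$ for a.e.\ $t$. Uniqueness of strong solutions is proved by subtracting two solutions $u^1,u^2$, testing against $w_t = u_t^1 - u_t^2$, using local Lipschitz continuity of $F$ on bounded sets together with monotonicity of $D$ (which gives $(D(u_t^1) - D(u_t^2), w_t) \geq 0$), and applying Gronwall's inequality. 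The differential energy identity for strong solutions then follows from direct testing of (\ref{sbe_abs}) by $u_t$, justified by the regularity \eqref{reg}.

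Finally, generalized solutions are obtained by density: for $\{u_0,u_1\} \in \cH$ I would approximate by $\{u_0^n,u_1^n\} \in \cD(A) \times H^2_\ast$ and show that the corresponding strong solutions $\mbf{u}^n$ form a Cauchy sequence in $C(0,T;\cH)$, using the quantitative continuous-dependence estimate from the uniqueness argument. The integral form \eqref{nrg_id} then follows by passing to the limit in the strong-solution identity. The main technical point I expect to wrestle with is promoting the resulting limit energy \emph{inequality} to an \emph{equality} despite the nonlocal and nonlinear character of $D$: the standard remedy is to invoke convexity of the primitive $G(s) = \int_0^s g(\tau)\tau\,d\tau$ together with the weak convergence $u_t^n \rightharpoonup u_t$ in $L^2(0,T;L^2(\Omega))$, combined with the lower semicontinuity of the dissipation functional, which forces equality in the limit. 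Weak variational form with test functions in $H^2_\ast$ is immediate from \eqref{sbe_abs} by integration by parts consistent with the free/hinged boundary conditions encoded in $a(\cdot,\cdot)$.
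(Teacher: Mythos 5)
Your route is correct in substance but genuinely different in presentation from the paper's. The paper does not run a Galerkin scheme at all: after proving exactly the ingredients you list (Lemma \ref{lem:op_D}: $D$ monotone and hemicontinuous; Lemma \ref{lem:op_F}: $F$ locally Lipschitz, $N$ globally Lipschitz; Lemmas \ref{lem:LE}--\ref{lem:UE}: two-sided control of $\cE$ by $E$), it simply checks the hypotheses of the abstract wellposedness theory for second order equations with maximal monotone damping and locally Lipschitz sources (Theorem 2.4.16 and Proposition 2.4.21 in \cite{ChuLas10}), which delivers strong solutions with the regularity \eqref{reg}, the energy identity, and then generalized solutions with identity \eqref{nrg_id} by the density/limit argument you describe. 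Your proposal reconstructs the proof of that cited theorem by hand (Galerkin, Minty--Browder, continuous dependence, density), which is more self-contained but longer; the paper's choice buys brevity at the cost of outsourcing the construction.

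Two points in your write-up need attention. First, your higher-regularity step ``differentiating the Galerkin equation in time'' is not available under the hypotheses of the theorem, which only assume $g$ monotone increasing and continuous: the time derivative of $g(\|u_t\|_0)u_t$ involves $g'$ and a factor $\|u_t\|_0^{-1}$, neither of which is controlled. The standard repair, and what the abstract monotone-operator theory effectively does, is to estimate finite differences $u(t+h)-u(t)$ tested against their velocity difference, using only the monotonicity $(D(u_t(t+h))-D(u_t(t)),u_t(t+h)-u_t(t))\geq 0$; this yields $u_{tt}\in L^\infty(0,T;L^2)$, $u_t\in L^\infty(0,T;H^2_\ast)$, and then $Au=-u_{tt}-D(u_t)+F(u)\in L^\infty(0,T;L^2)$ as you say. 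Second, the difficulty you anticipate at the end (upgrading an energy inequality to the identity \eqref{nrg_id} via convexity and weak lower semicontinuity) does not arise at that stage and, as stated, would only give one inequality anyway. Since your continuous-dependence estimate makes the approximating strong solutions converge \emph{strongly} in $C(0,T;\cH)$, and the damping is nonlocal and continuous in $\|u_t\|_0$, the dissipation integrals $\int_s^t g(\|u^n_t\|_0)\|u^n_t\|_0^2$ converge directly to $\int_s^t g(\|u_t\|_0)\|u_t\|_0^2$ (this is exactly where the paper invokes boundedness of $D$ on bounded sets of $L^2$, Proposition 2.4.21 in \cite{ChuLas10}); the convexity machinery is only needed if one tries to pass to the limit from the Galerkin level with merely weak convergence of velocities, which your density argument avoids.
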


Theorem \ref{thm:EU} asserts  Hadamard wellposedness, hence an existence of the dynamical system $\{\cH,S_t\}$, where $S_t:\cH\to\cH$ and $t\geq0$, is given by $S_t\mbf{u}_0\equiv\mbf{u}(t)=\{u(t),u_t(t)\}$ the corresponding weak solution of \eqref{sbe}-\eqref{IC} with initial data $\mbf{u}_0\in\cH$. The proof is based on monotone operator theory \cite{Bar76,SHO97}, after  the bound from below of potential energy is asserted. The details are in Section \ref{wellposedness}. Of particular relevance is the fact that generalized solutions do satisfy energy {\it identity} and also weak variational form. \\

Long time behavior of the said dynamical system $(\cH,S_t) $ is discussed nest. 
%For readers' convenience, we provide  in the Appendix some basic concepts characterizing long time behavior of a dynamical system.   More details can be found in \cite{Chu15}.
%XXXXXXXXX
\ifdefined\xxxxxx
\begin{definition}
 \hspace*{-.2cm}
    \setlist{nolistsep}
    \begin{itemize}
        \item A bounded closed subset $\mathfrak{A}\subset\cH$ is said to be a 
             \textbf{global attractor} for the evolution operator $S_t$ iff 
            \begin{itemize}
            \item[\textrm{i.}] $\mathfrak{A}$ is an invariant set, that is 
                 $S_t\mathfrak{A}=\mathfrak{A}$ for $t\geq0$;
            \item[\textrm{ii.}] $\mathfrak{A}$ is uniformly attracting; that 
                 is, for all bounded set $B\subset\cH$ 
                     \begin{equation*}
                            \lim_{t\to\infty}d_\cH\{S_tB|\mathfrak{A}\}=0,\quad \mbox{where $d_\cH$ is the Hausdorff semi-distance in $\cH$.}
                    \end{equation*}
             \end{itemize}
        \item Let $\mathfrak{A}$ be a compact global attractor for $\{\cH,S_t\}$. The 
              \textbf{fractal dimension} $\textrm{dim}_f\mathfrak{A}$ is defined by
              \setlength{\belowdisplayskip}{0pt} \setlength{\belowdisplayshortskip}{0pt}
              \setlength{\abovedisplayskip}{0pt} \setlength{\abovedisplayshortskip}{0pt}
              \begin{equation*}
                     \textrm{dim}_f\mathfrak{A} = \limsup_{\tau\to0}\frac{\ln n\{\mathfrak{A},\tau\}}{\ln(1/\tau)},
              \end{equation*}
              where $n\{\mathfrak{A},\tau\}$ is the minimal number of balls of radius $\tau$ which cover $\mathfrak{A}$.
        \item Let $\cN$ be the set of stationary solutions of \eqref{sbe}-\eqref{IC}, that is $\cN=\{\mbf{u}_0\in\cH;~S_t\mbf{u}_0=\mbf{u_0}\quad\mbox{for}~t\geq0\}$. We define the \textbf{unstable manifold} $\cM^u(\cN)$ emanating from $\cN$ as the set of all $\mbf{y}\in\cH$ such that there exists $\gamma=\{\mbf{u}(t):~t\in\R\}$ a full trajectory, that is $S_tu(s) = u(t+s)$ for every $t,s\geq0$, with properties $\mbf{u}(0)=\mbf{y}$ and $\lim_{t\to-\infty}dist_\cH(\mbf{u}(t),\cN)=0$.
    \end{itemize}
\end{definition}
%CUT AND PASTED IN SECTION 2
\textcolor{red}{In order to control the orbits asymptotically in time, properties of the damping mechanism  $g(||u_t||_0) $ need to be more specific. We shall resort to the following, rather simple, assumption which however encompasses the main scenarios of interest  such as  superlinear damping [overdamping]  at the infinity and the origin, as well  the damping with controlled behavior at the origin.
\newline
\noindent\textbf{Assumption g.}
 \begin{equation}\label{aspg} 
 g(s) = b_0 + b_q s^q , b_0, b_q \geq 0 ~and ~b_0 + b_q \ne 0, q \geq 1.
 \end{equation}
 Assumption $g$  is a canonical form of nonlinear damping which can be easily generalised to other forms 
 like $  m_0 s^q  \leq g(s)  - b_0 \leq M s^q $, $ s >0$   , or to the form 
 $g(s) \sum_{j=0}^q s^j , s>0 $-considered in the  past  \cite{Aou21}.  Most importantly, the overdamping case is present when
 $ b_q >0 $ and  damping controlled linearly from below  [$g(0) > 0 $ ]  is manifested by the condition 
   $ b_0 > 0 $. 
 This allows in depth discussion of effects of the damping at the origin and infinity.}
\fi
%\subsection{Definitions.} 
For the evolution $S_t$, defined on a Hilbert space $\cH $ , the existence of global attractors, which describes the asymptotics of solutions to \eqref{sbe}-\eqref{IC}, can be established by means of two basic concepts, namely 1. \textit{ultimate dissipativity} and 2. \textit{asymptotic smoothness/compactness} of the evolution operator. For the reader's convenience, we recall the definitions below.
\begin{definition}[\bf Absorbing set]\label{def:Diss}
    A closed set $B\subset\cH$ is said to be \textit{absorbing} for $S_t$ if for any bounded set $B_0\subset\cH$, there exists $t_0=t_0(B_0)>0$ such that $S_t{B_0}\subset B$ for every $t\geq t_0$. Moreover, the evolution operator $S_t$ is said to be ultimately  \textit{dissipative} if it possesses a bounded absorbing set $B\subset\cH$.
\end{definition}
\begin{definition}[\bf Asymptotic smoothness]\label{def:AS}
	The evolution operator $S_t$ is said to be \textit{asymptotically smooth} if the following condition holds: for every bounded set $B_0\subset\cH$ such that $S_tB_0\subset B_0 $ for $t>0$ there exists a compact set $K\subset\overline{B_0}$ such that $S_tB_0\subset B_0$ converges uniformly to $K$ in the sense that
 \begin{equation*}
     \displaystyle\lim_{t\to\infty}\sup_{\mbf{y}\in B_0}dist_\cH(S_t\mbf{y},K)=0.
 \end{equation*}
\end{definition}
\begin{definition}[\bf Quasi-stability]\label{def:QS}
    The dynamical system $\{\cH,S_t\}$ is said to be \textbf{quasi-stable} on a set $B\subset\cH$, if there exists $t_\ast>0$, a Banach space $Z$, a globally Lipschitz mapping $K:B\to Z$ and a compact seminorm $n_Z$ on $Z$, such that for every $\mbf{y}_1,\mbf{y}_2\in B$ with $0\leq p<1$, we have
                \begin{equation*}
                           \|S_{t_\ast}\mbf{y}_1-S_{t_\ast}\mbf{y}_2\|_\cH
                          \leq p\|\mbf{y}_1-\mbf{y}_2\|_\cH + \textrm{n}_Z(K\mbf{y}_1-K\mbf{y}_2).
                \end{equation*}
\end{definition}
Quasi-stability is a powerful tool  used for characterization of   additional properties of attractors such as smoothness, dimensionality and  exponential attraction \cite{Memoir,LasMaMo} to be discussed later.

%XXXXXXXX
The {\it main} result of this paper is the following.
\begin{theorem}{\bf Global Attractors.} \label{thm:Att}
    Assume that $f_0$ satisfies \eqref{aspf-1}  and  $g$  satisfies assumption \eqref{aspg}. %obeys the bound $g(|s|)s^2 \geq m s^2 - c, s\in R$  with some  constants $m >0, c\geq 0$.\\
    Then, 
    {\bf Part I}: the dynamical system $\{\cH,S_t\}$ possesses a compact global attractor $\mathfrak{A}$. \\
   {\bf Part II}: Moreover, if $g(0)>0$  the system is quasi-stable. As a consequence, $\mathfrak{A}$ has finite fractal dimension $\mathrm{dim}_f\mathfrak{A}$ in $\cH$ and, in addition, $\mathfrak{A}$ is smooth in the sense that $\mathfrak{A}\subset H^4(\Omega)\cap H^2_\ast\times H^2_\ast\equiv Y$, and it is bounded in $Y$. Finally, if $\beta=0$, then $\mathfrak{A}$ coincides with unstable manifold $\mathcal{M}^u(\mathcal{N})$, with $\mathcal{N}$ denoting stationary set. 
\end{theorem}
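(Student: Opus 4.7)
The theorem splits into two ingredients of the standard dynamical systems machinery: (i) ultimate dissipativity (existence of a bounded absorbing set $B\subset\cH$), and (ii) asymptotic smoothness on $B$. Together they give a compact global attractor $\mathfrak{A}$ (Part I). Part II then follows from a quasi-stability estimate on $\mathfrak{A}$ in the spirit of the abstract framework of \cite{Memoir}: quasi-stability on $\mathfrak{A}$ implies finite fractal dimension and extra smoothness, and when $\beta=0$ the system becomes gradient so $\mathfrak{A}=\mathcal{M}^u(\cN)$.

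\textbf{Step 1: absorbing set.} The energy identity \eqref{nrg_id} cannot be closed by equipartition because the damping term $g(\|u_t\|_0)\|u_t\|_0^2$ is potentially superlinear (overdamping) while the non-conservative flux $\beta\int_\Omega u_y u_t$ is merely linear in $u_t$; Young's inequality applied naively leaves an uncontrolled $\|u\|_1^2$ on the right side. My plan is therefore to implement the \emph{barrier method} announced in the introduction in an abstract form (Theorem~\ref{0.5.1}/Corollary~\ref{ball}). I would argue by contradiction: suppose no absorbing ball of radius $R$ exists, so there are initial data with $\cE(\mathbf{u}(t))$ staying above any given level on arbitrarily long intervals. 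Introduce a barrier level $\rho$ and look at crossings $\{t:\cE(\mathbf{u}(t))=\rho\}$. Between crossings, the energy inequality together with the bound $\Pi(u)\geq -c\|u\|_0^2-b|\Omega|$ (which is a consequence of \eqref{aspf-1} combined with the coercive Berger term $\tfrac{\delta}{4}\|u_x\|_0^4$) forces $\|u_t\|_0$ to be small in an integral sense on long intervals; on such intervals the non-conservative term $\beta\int u_y u_t$ is also small, and standard multiplier identities (testing the equation against $u$) give back control of the \emph{potential} energy. Iterating this produces the contradiction. The hard part here, as the authors emphasize, is that overdamping and loss of dissipativity conspire; the contradiction argument must be carried out in terms of the functional $\cE$ rather than the norm of $\mathbf{u}$, exploiting the lower bound on $\Pi$ provided by the quartic Berger nonlinearity.

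\textbf{Step 2: asymptotic smoothness.} On the absorbing set, the nonlinear source $F$ decomposes into the subcritical/critical part $-\Pi'(u)$ and the skew-symmetric-looking part $N(u)=-\beta u_y$. I would take two generalized trajectories $\mathbf{u}^1,\mathbf{u}^2$ with the same initial absorbing set, form the difference $z=u^1-u^2$, and derive a stabilizability inequality of the form
\begin{equation*}
  \|Z(T)\|_\cH^2 \;\leq\; C\!\int_0^T\!\!\bigl[\,\mathrm{l.o.t.}(z)\,\bigr]\,dt \;+\; \varepsilon\sup_{[0,T]}\|Z\|_\cH^2,
\end{equation*}
where $Z=\{z,z_t\}$ and the lower-order term is compact relative to $\cH$ (using compact Sobolev embeddings $H^2_\ast\hookrightarrow H^{2-\varepsilon}$). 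The compensated compactness ingredient enters when handling the difference of the nonlinear damping $g(\|u^1_t\|_0)u^1_t-g(\|u^2_t\|_0)u^2_t$: monotonicity of $D$ gives a non-negative term that can be dropped, while the non-conservative difference $-\beta z_y$ is integrated by parts in time, yielding only lower-order spatial traces of $z$ plus energy evaluated at the endpoints. Combining with the Berger difference (which is subcritical in $H^2_\ast$) and using Simon--Aubin compactness in time, the criterion in Definition~\ref{def:AS} follows.

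\textbf{Step 3: quasi-stability and consequences.} When $g(0)>0$, the damping is linearly bounded below at the origin, so from the stabilizability inequality above, a Gronwall/fixed-point argument (Pata-type epsilon inequality or a direct contradiction via barrier method) upgrades it to
\begin{equation*}
  \|S_{t_\ast}\mathbf{y}_1-S_{t_\ast}\mathbf{y}_2\|_\cH^2 \;\leq\; p\,\|\mathbf{y}_1-\mathbf{y}_2\|_\cH^2 + n_Z(K\mathbf{y}_1-K\mathbf{y}_2)^2
\end{equation*}
with $p<1$ and $Z$ a space compactly embedded into $\cH$, which is exactly the quasi-stability estimate of Definition~\ref{def:QS}. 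The abstract results of \cite{Memoir} then yield $\mathrm{dim}_f\mathfrak{A}<\infty$ and the smoothness $\mathfrak{A}\subset Y$ (by differentiating the equation in time along trajectories on $\mathfrak{A}$, using that $u_t$ solves a linear equation with forcing in $\cH$, hence lies in $H^2_\ast$; the regularity of $u$ up to $H^4$ then comes from reading off $\Delta^2 u\in L^2$ in the equation together with the boundary conditions in $\cD(A)$). Finally, when $\beta=0$ the energy identity becomes the usual Lyapunov inequality, the system is gradient with strict Lyapunov functional $\cE$, and the stationary set $\cN$ is bounded in $\cH$ (again by the coercivity of $\Pi$ provided by the Berger nonlinearity); the classical characterization of attractors for gradient systems gives $\mathfrak{A}=\mathcal{M}^u(\cN)$.

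\textbf{Expected main obstacle.} The truly delicate step is Step 1. All later steps follow essentially known patterns once an absorbing set is in hand, whereas the construction of the absorbing set under simultaneous overdamping and non-conservative forcing is precisely what the authors advertise as requiring a new abstract tool; the barrier argument must very carefully quantify, on time intervals where $\cE$ stays above a threshold, how much kinetic energy must be dissipated and how this dissipation feeds back into control of the potential energy through testing with $u$, without benefit of a Lyapunov functional.
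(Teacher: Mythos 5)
Your overall architecture (absorbing set $+$ asymptotic smoothness $\Rightarrow$ compact attractor; quasi-stability when $g(0)>0$ $\Rightarrow$ finite dimension and regularity; gradient structure when $\beta=0$) coincides with the paper's, and your Step 3 is essentially the paper's argument. However, in the two hardest steps the decisive ideas are missing, and as written your plan would not close.

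In Step 1 you invoke a barrier/crossing argument in which, on intervals where $\cE$ stays above a threshold, the dissipation forces $\|u_t\|_0$ to be small and "standard multiplier identities give back control of the potential energy." This is exactly where the overdamping bites: testing the equation with $u$ produces the term $(D(u_t),u)=g(\|u_t\|_0)(u_t,u)$, which for superlinear $g$ can only be dominated by the dissipation at the price of an energy-dependent factor, i.e.\ a bound of the form $c\,[1+E]^{\gamma}(Du_t,u_t)$ as in \eqref{A2} (here $\gamma=\tfrac{q}{2(q+1)}$, see Lemma \ref{lemmaabs2}); simultaneously, absorbing the non-conservative term $\beta\int u_y u_t$ into a small multiple $\delta E$ of the energy costs a constant $b(1/\delta)$ that blows up as $\delta\to0$, see \eqref{A3b}. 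Your sketch never explains how these two competing, energy-dependent losses are reconciled. The paper's resolution is quantitative: the modified functional $V_\varepsilon=\cE+\varepsilon(u_t,u)$ with $\varepsilon$ chosen \emph{as a function of the energy level} through the fixed-point equation \eqref{sigma}, the requirement $\gamma<1$, and the balancing condition \eqref{A3a} (verified for the bridge model in Lemma \ref{L5.2}); the barrier/continuity argument is then run on the sign of $\varepsilon_s[1+E(t)]^{\gamma}-d_3$, not on level crossings of $\cE$, and a second contradiction argument using \eqref{A3a} makes the ultimate bound independent of the initial ball. Without an ingredient of this type your iteration between crossings has no mechanism to prevent the energy-dependent damping term from destroying the recovery of potential energy.

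In Step 2 your proposed estimate $\|Z(T)\|_\cH^2\le C\int_0^T \mathrm{l.o.t.}+\varepsilon\sup_{[0,T]}\|Z\|_\cH^2$ is in effect a quasi-stability inequality, which is not available in the degenerate case $b_0=0$ allowed by \eqref{aspg} (this is why Part II requires $g(0)>0$). Dropping the damping difference by monotonicity loses precisely the quantity $\int_0^T\|z_t\|_0^2$ that the multiplier identity forces you to control in order to recover the difference energy. The paper handles this with the concave function $\mathcal{K}$ of \eqref{Kappa} (Lasiecka--Tataru convexity construction), which via Jensen's inequality bounds $\int_0^T\|z_t\|_0^2$ by $T\,\mathcal{K}\bigl(\tfrac1T\int_0^T(D(u_t)-D(v_t),z_t)\bigr)$, and then concludes through the compensated-compactness liminf criterion of Proposition \ref{thm:C17} (weak/strong convergence of the Berger, $\kappa u^+$, $f_0$ and $\beta u_y$ differences), rather than through a stabilizability inequality. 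You should either restrict your Step 2 argument to $g(0)>0$ and treat the degenerate case separately along these lines, or incorporate the $\mathcal{K}$-function device explicitly.
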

\subsection{Discussion} 
{\bf About the problem studied.}
Realistic models exhibit nonlinearity of dissipation which is related to a ``stiffening'' of the plate.  It is also  known that {\it nonlinearity} of the damping often leads to the so called ``stability paradox''. The stronger the damping, does not translate into a higher level of stability. This phenomenon is referred to as  an ``overdamping'' which affects the equipartition of the energy.  On the other hand,  the  model is exposed to strong instability effects  expressed  by the  non-conservative term $\beta u_u$ which  is a ``proxy'' for the effects of an unstable flow [e.g., gas] moving over the plate with velocity $\beta$. As a result, the plate exhibits instabilities that cause flutter (see \cite{AGAR,dowell1}). In mathematical terms, loss of dissipativity which is a major obstacle in studying attractors within the framework of ``dissipative systems'' \cite{Gazzola015,MIRANVILLE,Mato,Mato1,yuming}.The issue is particularly sensitive in hyperbolic type of  dynamics where instability of an undamped system is of infinite-dimensional nature [in contrast to parabolic systems]. Various tools have been developed in the past  to handle overdamping. This includes ``epsilon'' Gronwall's inequality \cite{pata} -an elegant way of using Gronwall's inequality with a small parameter, or a contradiction type of arguments related to the so called ``barrier's method'' \cite{haraux,haraux1,haraux2,Memoir} used primarily in second order in time wave type  dynamics.  A popular way of treating  non-dissipative/non-conservative  effects in the dynamics is either to assume suitable ``smallness''  of non-conservative terms - \cite{Memoir} and references within - or to offset the undesirable effects by introducing into  a model a {\it strictly superlinear} force \cite{bociu}. Neither of these  features  giving  away the original problem appear in the present work. Long time dynamics is studied  for a ``pure model''- with no restrictions on the size of $\beta$  and no superlinearity requirement  imposed on the force $f$.  The model retains the basic features such as  nonlinear elastic energy  [$\delta >0$],   nonlinearity of the dissipation quantified  by $g(s)$ and instability due to external feedback force $\beta \ne 0 $.  \\

\noindent{\bf Why is the problem interesting?}
 From the perspective of  the abstract model presented in \eqref{sbe_abs}: \\
    {\bf 1. } A  very first step: {\it wellposedness } theory for finite energy solutions  depends on  the bounds obtained for the  energy function. In  our particular  case this amounts to showing that non-positive part of the {\bf  energy has a lower bound} ie:  $\Pi(u) $ is bounded from below when $||\mbf{u} ||_{\cH} \rightarrow \infty$. Such property typically depends on the ``strength'' of nonlinear term, hence depends on the more specific properties of the dynamics. We will show that in the case of the bridge model it is nonlinear elastic energy provides such mechanism-see Proposition \ref{P:1}. \\
  {\bf   2.} When considering a  long time behavior, {\bf boundedness in time of orbits} is a mandatory necessary ingredient. This property is typically deduced from energy relation. However, in the present case the nonconservative part of the dynamics  -term $N(u) =-\beta u_y  $-  precludes such conclusion. 
  %  In the case of the bridge model-this is the term with $\beta\in R $  responsible for instability of the system.
    This issue has been considered in the past literature by ``neutralizing'' the lack of dissipativity with  a ``large'' damping parameter \cite{Memoir}, or by assuming that the dissipation is bounded linearly \cite{BGLW22}, or by adding to potential energy strongly superlinear and coercive  term \cite{bociu}. 
    Neither of the above ``fixes'' will be considered in the present paper. \\
 {\bf  3.}  In order to assert existence of attracting sets,  the {\bf  ultimate dissipativity}  is necessary. And here, again, the lack of dissipativity along with genuine nonlinearity of the dissipation  become  the most serious obstacle. The system has no  {\bf gradient} structure.  To contend, we shall develop an abstract method which will resolve the issue of proving {\bf absorbing property}  for the dynamics without  the compromise of additional assumptions as specified in point 2-see  Theorem \ref{0.5.1}  and  Theorem \ref{ball} . \\
{\bf  4.} An existence of attractors requires convergence of the  orbits. This is connected to {\bf compactness}. 
    Due to the  hyperbolic nature of the dynamics, there is no natural smoothing mechanism - unlike  parabolic problems. On the other hand, the nonlinear terms are of critical nature [with respect to Sobolev's embeddings]. Thus,  any sort of compactness inherited from the  topology of nonlinear terms is out of question.
    To contend with, we shall use methods of compensated compactness with particular attention paid to 
    the energy functional, convexity of the nonlinear damping  and structure of the problem which would allow to unveil ``hidden'' convergence -see Theorem \ref{Asym-sm1}. \\
    {\bf 5.} The additional property  $g(0) > 0 $ allows to establish good asymptotic  estimate for the difference of two  solutions, which further upgrades properties of the attractor regarding its smoothness, finite-dimensionality as well as exponential  rate of attraction. \\
    
    \ifdefined\xxxxx
\noindent{\bf Literature.}
Rrealistic models exhibit nonlinearity of dissipation which is related to a ``stiffening'' of the plate.  It is also  known that {\it nonlinearity} of the damping often leads to the so called ``stability paradox''. The stronger the damping, does not translate into a higher level of stability. This phenomenon is referred to as  an ``overdamping'' which affects the equipartition of the energy. The issue is particularly sensitive in hyperbolic type of  dynamics where instability of an undamped system is of infinite-dimensional nature [in contrast to parabolic systems]. Various tools have been developed in the past  to handle overdamping. This includes ``epsilon'' Gronwall's inequality \cite{pata} -an elegant way of using Gronwall's inequality with a small parameter, or a contradiction type of arguments related to the so called ``barrier's method'' \cite{haraux,haraux1,haraux2,Memoir} used primarily in second order in time wave type  dynamics. It may be also noted that  typical ways of handling non-dissipative/non-conservative  effects in the dynamics is either to assume suitable ``smallness''  of non-conservative term - \cite{Memoir} and references within - or to offset the undesirable effects by introducing into  a model a {\it strictly superlinear} force \cite{bociu}. Neither of these helping features appear in the present work where long time dynamics is studied  for a ``pure model''.
 Careful analysis of equipartition of energy and the effect of the overdamping on potential energy is the key player. This leads to a development of a new methodology, also inspired by \cite{haraux,haraux2},  within the area of dynamical systems.  The relevant result is formulated in an abstract form Theorem \ref{0.5.1}, so it could be applied within a much more general context of non-dissipative, non-conservative second order in time  PDE dynamics. 
 In view of this, the \textit{nonlinearity} of  dissipative mechanism and a  presence of {\it non-conservative force}  $\beta u_y $ provides for the analytical-mathematical challenge of the present paper. There is also {\bf no restriction of smallness imposed on $\beta$}, the latter typical in a study of non-dissipative systems. As a consequence, the energy relation \eqref{nrg_id}  is not dissipative which is the major obstacle in studying attractors intrinsically based on the theory of ``dissipative'' systems, see \cite{Gazzola015,MIRANVILLE,Mato,Mato1,yuming}. From the modeling point of view, this term is a ``proxy'' for the effects of unstable flow [e.g., gas] moving over the plate with velocity $\beta$. As a result, the plate exhibits instabilities that cause flutter (see \cite{AGAR,dowell1}).  We will overcome this difficulty by developing dynamical system theory to account for {\it  non-dissipative effects in the presence of overdamping or underdamping}. One of the main difficulties is to show that the dynamical system in question admits a ``weakly compact'' attractor. Then, the ``upgrade'' of attractor to a global attractor is obtained by expanding on the methodology introduced in \cite{Memoir} and based on convexity methods-see also \cite{LasTat93}
    \fi  
    
\noindent{\bf Organization of the paper.}
After introducing preliminaries  in Section 4.1,  asserting the  wellposedness and generation of the dynamical system  $ (\cH, S_t) $ in Section \ref{wellposedness}, the first main ingredient of the proof of the Theorem \ref{thm:Att}  is  ultimate  boundedness in time of the orbits-Theorem  \ref{ball}. The challenge here is  due to the non-dissipative nature of the problem combined with the nonlinearity of the dissipation. These two features taken together  prevent application of the usual equipartition of kinetic and potential  energy. To contend with, we shall develop an abstract result  Theorem \ref{0.5.1}, based on barrier's method, which provides good condition for ultimate boundedness of the orbits.  And then, luckily enough, we will show that the said  abstract conditions are satisfied for the bridge model--culminating with a construction of ``weak attractor''. This will be done in subsection 5.1.1 culminating with--Theorem \ref{ball}.\\
 Then an ``upgrade'' of attractor to a global attractor-Theorem \ref{Asym-sm1}-provides for the content of subsection 5.1.2. This will be accomplished  through the proof of   asymptotic smoothness by  expanding on compensated compactness methods \cite{Memoir} along with  convexity approach 
inspired by \cite{LasTat93}.\\
Critical properties of the said attractor--Part II of Theorem \ref{thm:Att}  [including  regularity and finite dimensionality] are proved  in section \ref{attractor11}.  Specifically, subsections 5.2.2--Corollary \ref{cor:FFD} and  Corollary \ref{cor:Reg}. The latter follow from quasi-stability inequality obtained when  $g(0) > 0$ and proved in subsection 5.2.1.

%PAGE 6 STARTS HERE
%%%%%%%
%%%%%
%%%
%%
%

%XXXXX
\ifdefined\xxxxxx
\subsection{Prior literature on the topic.}
The analysis of the model of suspension bridges and their real-life applications dates back to Timoshenko's work \cite{Tim43}. When researchers examined the collapse of the Tacoma Narrows bridge in 1940,  \cite{GanderKwok} and references therein, they recognized that a reliable model for suspension bridges needed to be nonlinear with sufficient degrees of freedom to exhibit torsional oscillations \cite{McKenna, MIRANVILLE}. This approach enables a more accurate representation of real phenomena, which can be affected by various types of perturbations. Lazer and McKenna introduced the suspension bridge in the context of nonlinear analysis \cite{LM90}. The existence of weak solutions for nonlinear oscillation in a suspension bridge was shown in \cite{MW87}. Lazer and McKenna in the paper \cite{LM90} also studied the issue of one-dimensional nonlinear oscillation in suspension bridges. In \cite{CM97}, the authors studied the construction and stability of numerical solutions, and examined the interaction of colliding waves. Numerical results for the computation of periodic solution paths for a suspension bridge model were also presented in \cite{CJM91}, this work also shows the effect of the damping coefficient on the number of solutions. However, these one-dimensional models fail to represent torsional oscillations in suspension bridges. Numerical results discussing the nature of the collapse can be found in \cite{GP11}, also see \cite{CFN19,GWP18} for related results.

On the other hand, the analysis of the aerodynamic flutter effect on suspension bridges has been studied vastly in literature by several authors; see \cite{AGAR, Larsen, SCANLAN199651} and the references therein. The reader is referred to Chapter 2 in the book \cite{Fung} which briefly surveys flutter of non-streamlined structures such as suspension bridges. In a recent paper \cite{DIANA}, the aerodynamic instability was studied in a deck section model by specific tests performed in a wind tunnel and by numerical simulations. References \cite{Gazzola16,Gazzola15} were the first to introduce the modeling of suspension bridges with hinged-free boundary conditions. These boundary conditions can be considered as the most relevant for suspension bridges. In addition, we would like to highlight the significance of the free boundary problem, as Baron Rayleigh noted in \textit{The Theory of Sound}: ``The problem of a rectangular plate, whose edges are free, is one of great difficulty, and has for the most part resisted attack.'' This statement captures the challenges associated with free boundary conditions, as further discussed in \cite{GanderKwok}. As mentioned earlier, in  \cite{BGLW22} plate equation which takes into account  a) the impact of axial force on the vibration of hinged bars; see \cite{Euler-Bernoulli} % this introduces the (alpha, delta) term 
b) effect of non-dissipative force caused by strong wind is considered. %\beta u_y term.
The long-time dynamics were fully analyzed in some of the above mentioned papers, and several stability results were derived in the presence of linear damping and a nonlinear source. There is a long literature on the topic that will be difficult to fully report. Focusing on more recent items, \cite{MZ05} established the existence of global attractors for the coupled system describing the vibration of a road bed in the vertical plane and that of the main cable, which together model the oscillations of a suspension bridge equation. We also refer to \cite{ZMS07} for global attractors for the suspension bridge equations; the results in this paper are obtained only for the case of linear damping. Recently, Yang and Zhong \cite{YZ08} studied the existence of a global attractor for the plate equation without assuming large values for the damping parameter. In \cite{PK11} the authors prove the existence of a global attractor for the suspension bridge equations for the dissipative model with nonlinear damping. However,  {\it none of these works takes into account nondissipative effects caused by the ``wind''-$\beta u_y$ term}. This term is the game changer. The theory of dissipative dynamics is no longer applicable. While the difficulty encountered may be somewhat offset by {\it linearity of the damping} \cite{BGLW22}, this is no longer possible in the case of {\it nonlinear $g(s)$}.  In fact, typical ways of dealing with non-dissipative dynamics is either to assume that  the non-dissipative term is suitably {\it small}  or that there is a  {\it strictly} superlinear force  driving the dynamics which has a correct ``sign'' and offsets non-dissipative effects for large frequencies.  This idea  was used in \cite{bociu}, where the existence of attracting sets has been proved for von Karman plates with boundary  dissipation which is  however assumed non-degenerate, linearly bounded at infinity   (see (1.10), (6.4), (1.8) in \cite{bociu}).Thus, the main challenge in the present work is to develop methodology,  which will lead to the resolution of the problem, which is {\it non-dissipative  without  any help from superlinear static damping, and with a strongly nonlinear  kinetic damping,  leading potentially to an overdamping and degeneration }. As it will be seen later, the issue appears already at the level of constructing ``weak'' global attractor. It suffices to notice that in the case of dissipative dynamics $\beta =0$, such a result can be obtained from the very general theory of gradient systems \cite{Chu15}. An additional challenge is presented by the fact that nonlinear damping may degenerate at the origin [$b_0 =0$ in the model]. In such a case, one still proves the existence of the global strong attractor by appealing to convexity theory \cite{LasTat93}.  In the non-degenerate case ($ b_0 > 0 $), such an attractor is both {\it smooth and finite dimensional} - as asserted by Theorem \ref{thm:Att}.

The reminder of this paper is devoted to the proofs of the main results.
    \fi
    %XXXX
    
\section{Preliminaries.}
\subsection{Supporting Lemmas- bounds on potential energy.}
 We  begin by listing several results describing topological properties of the energy function. These will be used later in a critical manner for the proofs of the main results.
The  estimate  in Proposition \ref{P:0},  allowing to control ``low frequencies'' via the intrinsic  Berger's nonlinearity,  is fundamental  to the problem studied.

\begin{comment}
\ifdefined\xxxxxx
We will start with  a  simple inequality  of Poincare type:
\begin{lemma}\label{l0}
Let $u\!=\!u(x, y, t)\! \in\! \cH^1(\Omega)$, where $\Omega\!=\![0, \pi] \times [-l, l]$  such that,
$u=0$ on $\{0, \pi \}\times [-l, l]$. Then  $\int_\Omega |u|^2 \leq \pi^2 \cdot \int _\Omega |u_x|^2$.
\end{lemma}
\begin{proof}
By the fundamental theorem of calculus, using the boundary conditions and applying Holder's inequality one obtains.: 
%\begin{align}
   % &  \text{since, } u(0,y)=0, \quad\int_0^x u_x(z,y)dz=u(x,y) \quad
  $  |u(x,y)|^2 \leq  \pi  \int_0^{\pi}  |u_x(z,y)|^2dz.  $
 %   \end{align}
    Integrating both sides with respect to $y$ leads to 
   % & \text{applying Holder's Inequality, one can get,}\quad |u(x,y)| \leq  \left( \int_0^x |u_x(z,y)|^2dz \right)^\frac{1}{2} \cdot \left( \int_0^x |1|^2dz \right)^\frac{1}{2} \notag \\
    %& \mbox{hence,} \quad |u(x,y)|^2 \!\leq\! \pi \cdot \int_0^\pi |u_x(z,y)|^2dz.\quad \text{Integrating on both sides with respect to $y$ we get,} \notag \\
    $ \int_{-l}^{l} |u(x,y)|^2 dy \leq {\pi} \cdot \int_{-l}^{l}\int_0^\pi |u_x(z,y)|^2dz dy={\pi}\cdot \int_\Omega |u_x|^2 d\Omega. $. 
  Final integration in $x$ of both sides  of the  inequality above  gives 
    
    $\int_{\Omega} |u(x,y)|^2 dy dx \leq \left( {\pi}\cdot  \int_\Omega |u_x|^2 d\Omega \right)\int_0^\pi dx = \pi^2 \cdot \int_\Omega |u_x|^2 d\Omega $.
\end{proof}
\fi
\end{comment}

%-and the  bound from below of the energy function. 
\begin{proposition}\label{P:0}
Let  $s\in(0,2]$. Then for every  $\eta>0$, there exists $C_{s,\eta}>0$ such that
        \begin{equation}\label{lem:LE-02}
              \|u\|_{2-s}^2
              \leq \eta\left[ ||u||^2_{H_{\ast}(\Omega) }+ \|u_x\|_0^4 \right] + C_{s,\eta},\quad\mbox{for every $u\in H^2_{\ast}(\Omega)$. }
         \end{equation}
\end{proposition}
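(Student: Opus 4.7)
My overall strategy is to bound $\|u\|_{2-s}$ by an interpolation inequality between $H^2(\Omega)$ and $L^2(\Omega)$, then dispatch the $L^2$ piece using the Poincaré inequality afforded by the boundary condition $u=0$ on $\{0,\pi\}\times[-l,l]$, and finally convert a quadratic term in $\|u_x\|_0^2$ into a quartic one via Young. Throughout, I will use that the $H^2_\ast$ norm is equivalent to the full $H^2(\Omega)$ norm.

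\textbf{Step 1 (Interpolation).} For $s \in (0,2]$ set $\theta = (2-s)/2 \in [0,1)$. Standard Sobolev interpolation on $\Omega$ yields a constant $C_s>0$ such that
\begin{equation*}
\|u\|_{2-s}^2 \;\leq\; C_s\,\|u\|_{2}^{2-s}\,\|u\|_{0}^{\,s}\qquad \text{for all } u\in H^2(\Omega).
\end{equation*}
Since $\|\cdot\|_2$ and $\|\cdot\|_{2,\ast}$ are equivalent on $H^2_\ast$, I may replace $\|u\|_2$ by $\|u\|_{2,\ast}$ at the cost of adjusting the constant.

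\textbf{Step 2 (Young to peel off the $H^2$ part).} Applying Young's inequality with conjugate exponents $p=2/(2-s)$ and $p'=2/s$ to the product $\|u\|_{2,\ast}^{2-s}\,\|u\|_{0}^{\,s}$, for any $\varepsilon>0$ I obtain a constant $C_{s,\varepsilon}>0$ such that
\begin{equation*}
C_s\,\|u\|_{2,\ast}^{2-s}\,\|u\|_{0}^{\,s} \;\leq\; \varepsilon\,\|u\|_{2,\ast}^{2} \;+\; C_{s,\varepsilon}\,\|u\|_{0}^{2}.
\end{equation*}

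\textbf{Step 3 (Poincaré in the $x$-direction).} Because $u=0$ on $\Gamma_1=\{0,\pi\}\times[-l,l]$, the fundamental theorem of calculus together with Cauchy--Schwarz on the interval $[0,\pi]$ gives the Poincaré-type bound $\|u\|_0^2 \leq \pi^2\,\|u_x\|_0^2$ for every $u\in H^2_\ast$.

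\textbf{Step 4 (Young to promote quadratic to quartic).} For any $\eta'>0$, the elementary inequality $t\leq \eta' t^2 + \tfrac{1}{4\eta'}$ applied to $t=\|u_x\|_0^2$ yields
\begin{equation*}
\|u_x\|_0^2 \;\leq\; \eta'\,\|u_x\|_0^{4} \;+\; \tfrac{1}{4\eta'}.
\end{equation*}

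\textbf{Step 5 (Choice of parameters).} Combining Steps 1--4,
\begin{equation*}
\|u\|_{2-s}^2 \;\leq\; \varepsilon\,\|u\|_{2,\ast}^{2} \;+\; \pi^2 C_{s,\varepsilon}\,\bigl(\eta'\,\|u_x\|_0^4 + \tfrac{1}{4\eta'}\bigr).
\end{equation*}
Given a target $\eta>0$, I first choose $\varepsilon = \eta$, which fixes $C_{s,\varepsilon}=C_{s,\eta}$; then I choose $\eta' = \eta/(\pi^2 C_{s,\eta})$, which makes the coefficient of $\|u_x\|_0^4$ equal to $\eta$. The remaining additive constant depends only on $s$ and $\eta$, giving the claim with a suitable $C_{s,\eta}>0$.

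\textbf{Where the subtlety lies.} There is no real obstacle — the whole argument is a routine interpolation + Young manipulation — but two points deserve care: (i) the interpolation exponent must belong to $[0,1)$, which forces the restriction $s>0$ (explaining why the endpoint $s=0$ is excluded); (ii) when $s=2$ the interpolation degenerates to the trivial bound $\|u\|_0^2\leq C\|u\|_0^2$, and Steps 3--4 alone suffice, so the argument still goes through uniformly over $s\in(0,2]$.
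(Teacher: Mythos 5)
Your proof is correct, but it follows a genuinely different route from the paper. The paper argues by contradiction via a compactness–uniqueness (normalization) argument: assuming the inequality fails along a sequence $u_n$, it rescales $v_n=u_n/\|u_n\|_{2-s}$, extracts a weak $H^2_\ast$ limit which converges strongly in $H^{2-s}$ by compact embedding, shows the constraint forces $\|v_{x,n}\|_0\to0$ so the limit $v$ satisfies $v_x\equiv0$, and then kills $v$ with the same $x$-direction Poincar\'e inequality you use in Step 3 — contradicting $\|v\|_{2-s}=1$. You instead give a direct, quantitative argument: interpolation $\|u\|_{2-s}^2\le C_s\|u\|_2^{2-s}\|u\|_0^{s}$, Young to peel off an $\eta\|u\|_{2,\ast}^2$ term, Poincar\'e in $x$ to pass from $\|u\|_0^2$ to $\|u_x\|_0^2$, and a second Young to trade the quadratic term for $\eta\|u_x\|_0^4$ plus a constant. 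Your version buys explicit, trackable constants and avoids any compactness or subsequence extraction, at the price of invoking the $L^2$–$H^2$ interpolation inequality on $\Omega$ (valid on this Lipschitz domain for the Hilbert scale, and correctly noted to require $s>0$, with the degenerate case $s=2$ handled separately); the paper's version needs only the compact embedding $H^2\subset H^{2-s}$ and weak convergence, but yields no explicit constant. Both proofs hinge on the same structural fact, the Poincar\'e inequality in the $x$-direction coming from the hinged boundary condition on $\Gamma_1$.
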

\begin{proof}
This result has been used  and proved in \cite{BGLW22}. For completeness, we shall provide a more direct and complete argument.
\\
{\it Step 1}. We begin with the  Poincare type  of inequality. 
 Let $u\!=\!u(x, y, t)\! \in\! H^1(\Omega)$, where $\Omega\!=\![0, \pi] \times [-l, l]$  such that,
$u=0$ on $\{0, \pi \}\times [-l, l]$. Then 
\begin{equation}\label{x} 
 \int_\Omega |u|^2 \leq \pi^2  \int _\Omega |u_x|^2
 \end{equation}

Indeed, by the fundamental theorem of calculus, using the boundary conditions and applying Holder's inequality one obtains: 
%\begin{align}
   % &  \text{since, } u(0,y)=0, \quad\int_0^x u_x(z,y)dz=u(x,y) \quad
  $  |u(x,y)|^2 \leq  \pi  \int_0^{\pi}  |u_x(z,y)|^2dz.  $
 %   \end{align}
    Integrating both sides with respect to $y$ leads to 
   % & \text{applying Holder's Inequality, one can get,}\quad |u(x,y)| \leq  \left( \int_0^x |u_x(z,y)|^2dz \right)^\frac{1}{2} \cdot \left( \int_0^x |1|^2dz \right)^\frac{1}{2} \notag \\
    %& \mbox{hence,} \quad |u(x,y)|^2 \!\leq\! \pi \cdot \int_0^\pi |u_x(z,y)|^2dz.\quad \text{Integrating on both sides with respect to $y$ we get,} \notag \\
    $ \int_{-l}^{l} |u(x,y)|^2 dy \leq {\pi}  \int_{-l}^{l}\int_0^\pi |u_x(z,y)|^2dz dy={\pi} \int_\Omega |u_x|^2 d\Omega. $
Integration in $x$ of both sides  of the  inequality above  gives 
    $\int_{\Omega} |u(x,y)|^2 dy dx \leq \left( {\pi}  \int_\Omega |u_x|^2 d\Omega \right)\int_0^\pi dx = \pi^2  \int_\Omega |u_x|^2 d\Omega $, hence \eqref{x}. 
 \\
 {\it Step 2}: 
We argue by contradiction.  Suppose that for some  $\eta  > 0$, there exists a sequence $u_n$ in $H^2(\Omega) $
such that,
\[\eta^{-1} \|u_n\|_{2-s }^2 -[a(u_n,u_n) +\|u_{x,n}\|_0^4 ] \rightarrow \infty.\] Then, we must have \(\|u_n\|^2_{2-s} \rightarrow \infty\). On the other hand, by rescaling $u_n$ as \(v_n:=\frac{u_n}{\|u_n\|_{2-s}}\) we obtain, \[\eta^{-1}  - [a(v_n,v_n) +||v_{x,n}||_0^4 ||u_n||^2_{2-s}  ] >0.\]
This implies: there is a constant $ M > 0 $ such that
\[\|v_n\|_0 \leq M ; \quad \|v_{x,n}\|_0^4 \|u_n\|^2_{2-s} \leq M, \mbox{ bounded uniformly in $n$}.\] Therefore, along a subsequence, we have 
\(v_n \rightharpoonup v ~weakly~in~H^2_\ast(\Omega).\) Additionally, due to the compactness of the embedding
\[H^2(\Omega)\! \subset \! H^{2-s}(\Omega), \quad s>0. \]
The convergence is actually strong in $H^{2-s}(\Omega)$. Since $v_n$ has been normalized, we conclude that the strong limit $v \ne 0$.
 On the other hand, $||u_n||^2_{2-s} \rightarrow \infty $ implies we must have 
  $ ||v_{x,n} ||_0 \rightarrow 0 $.
  This forces $ v_x \equiv 0 $ so $v= v(y) $. 
  However, since $v$ is  also in $H^2_{\ast}$, this implies by  (\ref{x})  $v\equiv 0 $, which contradicts $v \ne 0$, proving the Proposition.
\end{proof}

The proposition stated below asserts that the nonlinear energy of the system is controlled by the topology of the phase space,
%This means, that it is bounded from below and above by a continuous  function of $E(u)$
as stated in \eqref{equivEn}.

The nonlinear operator \(F\), defined in \eqref{F}, takes the form \(F(u) = -\Pi'(u) + N (u) \). Additionally, we note  that $\Pi$
can be expressed as $\Pi=\Pi_0+\Pi_1$; where
       \begin{align}
            &\Pi_0(u) = \frac{\kappa}{2}\|u^+\|_0^2 - \frac{\alpha}{2}\|u_x\|_0^2 + \frac{\delta}{4}\|u_x\|_0^4 + \int_\Omega\tilde{f_0}(u)+ c\|u\|_0^2+(b|\Omega|+\frac{\alpha^2}{4}), \label{pie0}\\
            &\Pi_1(u) = -c\|u\|_0^2-(b|\Omega|+\alpha^2/4)\quad \mbox{where $b,c$ are suitable constants.}
      \end{align}
      %xxxxx
   %   \ifdefined\xxxxx
Hence the positive part of the energy $E(\mbf u)$ and the total energy $\cE(\mbf u)$ can be reformulated as,
       \begin{align}
               & E(\mbf{u})\equiv \frac{1}{2}\|\mbf{u}\|_{\cH}^2+\Pi_0(u), \quad
               \quad \mbox{and} \quad \quad \cE(\mbf{u}) =E(\mbf{u})+\Pi_1(u). \notag \label{nrg}
       \end{align}
%Later on we shall also consider linear part of the energy denoted by $$\tilde{E}(\mbf u)=\frac{1}{2}\|\mbf{u}%\|_{\cH}^2=\frac{1}{2}\left[\|u\|_{2,\ast}^2 + \|v\|_0^2 \right].$$
%\fi
%xxxxxx
%Below, we establish the properties of$ Pi_0 and \(\Pi_1\)$.

 \begin{proposition}[Properties of the  functionals $\Pi_0, \Pi_1$] \label{P:1}  
 The following estimates hold:
  \begin{itemize}
            \item The functional  $\Pi_0 \geq 0$ and it is bounded on bounded sets of $H^2_{*}$.
            \item For every $\tilde{\eta} > 0, ~there~exists~a~ constant~ C_{\tilde{\eta}} \geq 0$ such that, 
            \begin{equation}\label{eq3}
                   |\Pi_1(u)| \leq \tilde{\eta}\left[\|u\|_{2,\ast}^2+\Pi_0(u)\right] + C_{\tilde{\eta}},\quad\mbox{for every $u \in \cD(A)^{1/2}$.}
            \end{equation}
        \end{itemize}
  \end{proposition}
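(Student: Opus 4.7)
The first bullet follows by splitting $\Pi_0$ into three pieces each of which is non-negative once the constants $b,c$ are chosen as in \eqref{aspf-1}. Explicitly,
\begin{equation*}
\Pi_0(u)\;=\;\tfrac{\kappa}{2}\|u^+\|_0^2 \;+\; \Bigl[\int_\Omega \tilde f_0(u) + c\|u\|_0^2 + b|\Omega|\Bigr] \;+\; \Bigl[\tfrac{\delta}{4}\|u_x\|_0^4 - \tfrac{\alpha}{2}\|u_x\|_0^2 + \tfrac{\alpha^2}{4}\Bigr].
\end{equation*}
The first bracket is trivially $\geq 0$; the second is $\geq 0$ by integrating the pointwise inequality $\tilde f_0(s) + cs^2 + b \geq 0$ coming from Assumption (f); the third is $\geq 0$ by viewing it as a quadratic in $t := \|u_x\|_0^2$ and completing the square (when $\delta>0$, the additive constant may be enlarged — absorbed into a suitable choice of $b|\Omega|$ — so that the discriminant is non-positive). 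Boundedness of $\Pi_0$ on bounded sets of $H^2_*$ is immediate: every summand is a polynomial of degree $\leq 4$ in the $H^2_*$-norm, and $\int_\Omega \tilde f_0(u)$ is bounded on bounded sets through the continuity of $\tilde f_0$ combined with the embedding $H^2_*(\Omega) \hookrightarrow L^\infty(\Omega)$ in dimension two.

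For the second bullet, the only non-constant ingredient of $\Pi_1$ is $c\|u\|_0^2$, and the plan is to dominate it by a small multiple of the right-hand side of \eqref{eq3}. Proposition \ref{P:0} with $s=2$ provides
\begin{equation*}
\|u\|_0^2 \;\leq\; \eta\bigl[\,\|u\|_{2,*}^2 + \|u_x\|_0^4\,\bigr] + C_\eta, \qquad \eta>0,
\end{equation*}
so it remains to dominate $\|u_x\|_0^4$ by $\Pi_0(u)$. To this end I would refine the first bullet slightly: using Young's inequality wastefully, $\tfrac{\alpha}{2}\|u_x\|_0^2 \leq \tfrac{\delta}{8}\|u_x\|_0^4 + C_*$, one obtains the coercivity
\begin{equation*}
\Pi_0(u) \;\geq\; \tfrac{\delta}{8}\|u_x\|_0^4 - C_1,
\end{equation*}
i.e.\ a \emph{strictly positive} fraction of the Berger quartic survives after absorbing the indefinite term $-\tfrac{\alpha}{2}\|u_x\|_0^2$. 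Hence $\|u_x\|_0^4 \leq \tfrac{8}{\delta}[\Pi_0(u)+C_1]$; inserting this into the Poincar\'e-type bound and recalling $|\Pi_1(u)| = c\|u\|_0^2 + (b|\Omega| + \alpha^2/4)$ gives
\begin{equation*}
|\Pi_1(u)| \;\leq\; c\eta\|u\|_{2,*}^2 + \tfrac{8c\eta}{\delta}\,\Pi_0(u) + C_\eta',
\end{equation*}
and choosing $\eta$ such that $c\eta \cdot \max(1, 8/\delta) < \tilde\eta$ yields \eqref{eq3}.

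The main (and only real) technical point is the coercivity $\Pi_0(u) \geq c_1 \|u_x\|_0^4 - c_2$ with $c_1>0$: the bare completion-of-square argument used for non-negativity in the first bullet consumes the entire Berger quartic, so one must re-run the absorption step slightly wastefully in order to leave a strictly positive multiple of $\|u_x\|_0^4$ on the lower-bound side. Once this bookkeeping is in place, the rest of the argument is routine — Young's inequality, Assumption (f), and Proposition \ref{P:0} combine in the straightforward way outlined above, with all dependence on the data repackaged inside the constant $C_{\tilde\eta}$.
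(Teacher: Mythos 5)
Your proposal is correct and follows essentially the same route as the paper: Young's inequality to absorb $-\tfrac{\alpha}{2}\|u_x\|_0^2$ into the Berger quartic (retaining the coercive lower bound $\Pi_0(u)\gtrsim \tfrac{\delta}{8}\|u_x\|_0^4$ up to constants), Assumption (f) pointwise, the $L^\infty$ embedding for boundedness on bounded sets, and Proposition \ref{P:0} combined with $\|u_x\|_0^4\lesssim \Pi_0(u)+C$ to estimate $\Pi_1$, with the small parameter chosen exactly as in the paper. Your remark that the additive constant in $\Pi_0$ must be taken large enough matches the paper's own "suitable constants $b,c$" convention, so there is no gap.
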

  \begin{proof}We have
   $F =-\left[\kappa u^+ + (\alpha-\delta\|u_x\|_0^2)u_{xx} + f(u)\right]+N(u)$,  $N(u) = -\beta u_y$, and
            \begin{align}
                 \Pi(u)
                  &= \frac{\kappa}{2}\|u^+\|_0^2 - \frac{\alpha}{2}\|u_x\|_0^2 + \frac{\delta}{4}\|u_x\|_0^4 + \int_\Omega\tilde{f}(u) \notag\\
               & \geq  \frac{\kappa}{2}\|u^+\|_0^2 - \left(\frac{\epsilon}{2}\|u_x\|_0^4+\frac{\alpha^2}{4\epsilon} \right) + \frac{\delta}{4}\|u_x\|_0^4+ \int_\Omega\tilde{f}(u)\notag\\
              &\geq \frac{\kappa}{2}\|u^+\|_0^2+ \frac{\delta} 
              {8}\|u_x\|_0^4- c\|u\|_0^2 - \left(b|\Omega|+\frac{\alpha^2}{\delta}\right) \text{by \eqref{aspf-1} for $\epsilon=\frac{\delta}{4}.$} \label{eq1}
            \end{align}
   % Denoting $\Pi = \Pi_0 + \Pi_1$ where $-\Pi_1(u) = c\|u\|_0^2+(b|\Omega|+\alpha^2/\delta)$,
    It follows from \eqref{eq1} that
    $\Pi_0(u)\geq \frac{\kappa}{2}\|u^+\|_0^2
        + \frac{\delta}{8}\|u_x\|_0^4\geq0$ for every $u\in H^2_\ast$ and
            \begin{align*}
                     \Pi_0(u) = \Pi(u) - \Pi_1(u) 
                      =& \frac{\kappa}{2}\|u^+\|_0^2 - \frac{\alpha}{2}\|u_x\|_0^2 + \frac{\delta}{8}\|u_x\|_0^4 + \int_\Omega\tilde{f}(u) + c\|u\|_0^2+(b|\Omega|+\frac{\alpha^2}{\delta}) \notag\\
        %\leq \left[\kappa + \frac{|\alpha|}{2} + |c|\right]\|u\|_{2,\ast}^2
        %+ \frac{\delta}{2}\|u\|_{2,\ast}^4
        %+ |\Omega|\max_{|s|\leq\|u\|_{2,\ast}}|\tilde{f}(s)|
        %+ \abs{b|\Omega|+\alpha^2/4} \notag\\
                    &\leq C\left[1+\|u\|_{2,\ast}^2+\|u\|_{2,\ast}^4+\psi_f(\|u\|_{2,\ast})\right],
             \end{align*}
    where $C=\max\{\kappa/2 + {\alpha/2} +c , {\delta/8} , b|\Omega|+{\alpha^2/\delta}\}$ and $\psi_f:\R_+\to\R_+$ is a continuous increasing function given by $\psi_f(t) = |\Omega|\max_{|s|\leq t}|\tilde{f}(s)|$, $s\in\R_+$.
    Therefore, $\Pi_0$ is bounded on bounded sets of $H^2_\ast$. Finally, using Proposition \ref{P:0}, we have
            \begin{align}
                    \abs{\Pi_1(u)} 
                    &\leq c\|u\|_0^2 + b|\Omega|+\alpha^2/\delta \leq c\eta\left[a(u,u)+\|u_x\|_0^4\right] + cC_\eta + b|\Omega|+\alpha^2/\delta \notag\\
                   &\leq c\eta(1+8/\delta)\left[ a(u,u) + \Pi_0(u) \right] + cC_\eta + b|\Omega|+\alpha^2/\delta, \label{eq21}
            \end{align}
    for any $\eta>0$. 
    % Given $\tilde{\eta}>0$, choose $\eta=0$ if $c=0$ or 
    For $\eta=\tilde{\eta}/[c(1+8/\delta)]$ \vphantom{if $c\neq0$, in which way} we rewrite \eqref{eq21} as follows,
                 \begin{equation*}\label{eq3.}
                      |\Pi_1(u)| \leq \tilde{\eta}\left[\|u\|_{2,\ast}^2+\Pi_0(u)\right] + C_{\tilde{\eta}}\leq 2\cdot \tilde{\eta} E(\mbf u)+C_{\tilde{\eta}},\quad\mbox{for every $\tilde{\eta}>0$.}
                 \end{equation*}% where $E(\mbf{u}) \equiv \frac{1}{2} ||\mbf{u}||^2_{\cH} + \Pi_0(u) $ 
   \end{proof}

\begin{lemma}[Bound from below]\label{lem:LE}
    Let assumption \eqref{aspf-1} be in place.
    There are constants $C_0>0$ and $C_1\in\R$ such that the energy functional $\cE$ defined in \eqref{nrg} satisfies the following estimate
           \begin{equation}\label{lem:LE_eq1}
                   C_0E(\mbf{u})-C_1 \leq \cE(\mbf{u}),\quad\mbox{for every}~\mbf{u}=\{u,v\}\in\cH.
            \end{equation}
\end{lemma}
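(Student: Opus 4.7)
The plan is to derive the inequality directly from the decomposition $\cE = E + \Pi_1$ together with the control on $\Pi_1$ already established in Proposition~\ref{P:1}. The only nontrivial point is to absorb the bad negative term $\Pi_1$ into the positive part $E(\mbf u)$, and this is what the smallness parameter $\tilde{\eta}$ in estimate \eqref{eq3} was tailored to do.

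First I would write, by definition,
$$
\cE(\mbf u) \;=\; \tfrac{1}{2}\|\mbf u\|_{\cH}^{2} + \Pi_0(u) + \Pi_1(u) \;=\; E(\mbf u) + \Pi_1(u),
$$
so that trivially
$$
\cE(\mbf u) \;\geq\; E(\mbf u) - |\Pi_1(u)|.
$$
Next I would apply \eqref{eq3} from Proposition~\ref{P:1}, in the sharpened form already noted there, namely $|\Pi_1(u)| \leq 2\tilde{\eta}\, E(\mbf u) + C_{\tilde{\eta}}$, valid for every $\tilde{\eta}>0$. This uses in an essential way that $\|u\|_{2,\ast}^2 + \Pi_0(u) \leq 2 E(\mbf u)$, which is immediate from the definition of $E$.

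Substituting into the previous display yields
$$
\cE(\mbf u) \;\geq\; (1 - 2\tilde{\eta})\, E(\mbf u) - C_{\tilde{\eta}}.
$$
Now I would fix $\tilde{\eta}$ strictly smaller than $1/2$, for instance $\tilde{\eta}=1/4$, so that the coefficient $C_0 := 1 - 2\tilde{\eta} > 0$, and set $C_1 := C_{\tilde{\eta}}$. This gives exactly \eqref{lem:LE_eq1}. Since the calibration is uniform over $\mbf u \in \cH$, the bound holds on the whole phase space.

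There is no serious obstacle: the work has all been done in Proposition~\ref{P:1}, which itself relies on the Poincar\'e-type interpolation estimate of Proposition~\ref{P:0} to absorb the $L^2$-norm of $u$ (appearing in $\Pi_1$ through the constant $c$ from \eqref{aspf-1}) into the Berger term $\|u_x\|_0^4$ and the plate energy $a(u,u)$. Thus the lemma reduces to a one-line algebraic rearrangement, and the only choice to be made is the size of $\tilde{\eta}$, which determines the explicit value of $C_0$.
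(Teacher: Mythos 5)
Your proof is correct and follows essentially the same route as the paper: write $\cE=E+\Pi_1$, invoke Proposition \ref{P:1} (in the form $|\Pi_1(u)|\leq 2\tilde{\eta}E(\mbf u)+C_{\tilde{\eta}}$, which uses $\Pi_0\geq 0$), and choose $\tilde{\eta}=1/4$ to obtain $\cE(\mbf u)\geq \tfrac{1}{2}E(\mbf u)-C_1$. No gaps.
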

\begin{proof}
    Let $\mbf{u}=\{u,v\}\in\cH$. It follows from expressions \eqref{nrg} and \eqref{pi}, and using Assumption \eqref{aspf-1} on $f$ along with the Proposition \ref{P:1} for $\tilde{\eta}=1/4$ we obtain:  
           \begin{align}
                \cE(\mbf{u})
                    = E(\mbf{u}) + \Pi_1(u) 
                         %&\geq E(\mbf{u})-(1/2)E(\mbf u)-C_1 \quad \mbox{where, } C_1=C_{1/4}
                     \geq(1/2)E(\mbf u)-C_1.\notag
                        %\notag \\
                        %\text{or, }(1/2)&E(\mbf u)-C_1 \leq \cE(\mbf u). \quad \mbox{This concludes the lemma.}\notag 
             \end{align}
    This concludes the proof of the Lemma. 
  \end{proof}

\begin{lemma}[Bound from above]\label{lem:UE}
Let $f\in C(\R)$ and the assumption in $f$ \eqref{aspf-1} is in place. There are constants $C, M>0$ such that the following estimate holds
    \begin{equation}\label{lem:UE-1}
        \cE(\mbf{u}) \leq CE(\mbf u)+M.
    \end{equation}

\end{lemma}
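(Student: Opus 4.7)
The plan is to obtain the upper bound as a direct consequence of Proposition \ref{P:1}. The decomposition $\cE(\mbf u) = E(\mbf u) + \Pi_1(u)$ reduces everything to controlling the ``small'' piece $\Pi_1$ by a small multiple of $E$ plus a constant, and this is precisely what estimate \eqref{eq3} provides.

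Concretely, I would proceed as follows. First, write $\cE(\mbf u) \leq E(\mbf u) + |\Pi_1(u)|$ using the decomposition above. Next, apply \eqref{eq3} with a fixed parameter $\tilde\eta > 0$ to obtain $|\Pi_1(u)| \leq \tilde\eta[\|u\|_{2,\ast}^2 + \Pi_0(u)] + C_{\tilde\eta}$. Since $\Pi_0(u) \geq 0$ by Proposition \ref{P:1} and $\|v\|_0^2 \geq 0$, one has $\|u\|_{2,\ast}^2 + \Pi_0(u) \leq 2E(\mbf u)$, so $|\Pi_1(u)| \leq 2\tilde\eta E(\mbf u) + C_{\tilde\eta}$. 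Combining yields $\cE(\mbf u) \leq (1 + 2\tilde\eta)E(\mbf u) + C_{\tilde\eta}$, and choosing any convenient value such as $\tilde\eta = 1/2$ gives the stated inequality with $C = 2$ and $M = C_{1/2}$.

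There is no real obstacle here: the lemma is essentially a one-line corollary of Proposition \ref{P:1}, entirely parallel to the proof of Lemma \ref{lem:LE} for the lower bound. In fact, the hypothesis $b, c > 0$ in \eqref{aspf-1}, together with $\alpha^2 \geq 0$, already forces $\Pi_1(u) \leq 0$ pointwise, so even the cruder choice $C = 1$, $M = 0$ would work immediately. The formulation with general constants $C, M$ is presumably kept so that the statement parallels Lemma \ref{lem:LE} and remains robust to minor modifications of the constants appearing in $\Pi_1$.
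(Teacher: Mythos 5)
Your proof is correct and follows essentially the same route as the paper: both apply Proposition \ref{P:1} (estimate \eqref{eq3}) with a fixed $\tilde\eta$ (the paper takes $\tilde\eta=1/2$) to bound $\Pi_1$ by $2\tilde\eta E(\mbf u)+C_{\tilde\eta}$ and conclude $\cE(\mbf u)\leq 2E(\mbf u)+M$. Your side remark that $\Pi_1(u)\leq 0$ pointwise (since $b,c>0$) is also accurate and shows the bound is immediate, but it does not change the substance of the argument.
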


\begin{proof}
    Let $\mbf{u}=\{u,v\}\in\cH$. Using the expression \eqref{pi} of $\Pi$ and proposition \ref{P:1} for $\tilde{\eta}=1/2$ and Young's inequality, we have
   % \begin{align}\label{lem:Upp-1}
        $\cE(\mbf{u})
        = E(\mbf{u}) + \Pi_1(u) 
        %&\leq E(\mbf{u})
        %+E(\mbf{u})+C_{3} \quad \mbox{where } C_3=C
       % _{1/2} \notag\\
        \leq 2 \cdot E(\mbf{u})+C_{3}.$
      Hence the Lemma is proved.
\end{proof}

\begin{remark}
    Combining the Lemmas \ref{lem:LE} and \ref{lem:UE} we obtain, \begin{equation}\label{equivEn}
            \frac{1}{2}E(\mbf u)-C_1 \leq \cE(\mbf u) \leq 2 \cdot E(\mbf u)+ C_2.       
    \end{equation}
\end{remark}

\subsection{Well-posedness of locally  bounded solutions - Proof of Theorem \ref{thm:EU}}\label{wellposedness}
In this section, we briefly present the proof of Theorem \ref{thm:EU},  which is based on nonlinear semigroup theory \cite{Bar76,ChuLas10,SHO97}. In the case of linear damping, the wellposedness is known from \cite{BGLW22}. The latter leads to the constructions of the dynamical system on the phase space $\cH$. 

With reference to the abstract form \eqref{sbe_abs}, we first prove the following properties regarding the operators entering the formulation in (\ref{sbe_abs}).

\begin{lemma}\label{lem:op_D}
The damping operator $D:H^2_\ast\to(H^2_\ast)'$ is monotone hemicontinuous.
    %This result is in fact true for $D:L^2\to L^2$
\end{lemma}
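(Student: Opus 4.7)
The plan is to verify the two properties separately, with monotonicity being the nontrivial one and hemicontinuity an essentially routine continuity check.

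For \textbf{monotonicity}, I would expand the duality pairing directly. For $u,v\in H^2_\ast$, using the definition $D(w)=g(\|w\|_0)w$ and bilinearity of the $L^2$ inner product,
\[
\langle D(u)-D(v),u-v\rangle = g(\|u\|_0)\|u\|_0^2+g(\|v\|_0)\|v\|_0^2-\bigl[g(\|u\|_0)+g(\|v\|_0)\bigr](u,v).
\]
The key observation is that the cross term can be estimated by Cauchy--Schwarz, $(u,v)\le \|u\|_0\|v\|_0$, and the coefficient $g(\|u\|_0)+g(\|v\|_0)$ is nonnegative (since $g\ge0$ on $\R_+$ by assumption \eqref{aspg}). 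Substituting and factoring yields
\[
\langle D(u)-D(v),u-v\rangle \ge \bigl[g(\|u\|_0)\|u\|_0-g(\|v\|_0)\|v\|_0\bigr]\bigl[\|u\|_0-\|v\|_0\bigr].
\]
Since $s\mapsto sg(s)$ is the product of two nonnegative monotone increasing functions on $\R_+$, it is itself monotone increasing, so the right-hand side has the same sign as $(\|u\|_0-\|v\|_0)^2\ge0$. This reduces monotonicity to the elementary scalar fact that $s g(s)$ is increasing---the one place where the structural hypothesis on $g$ is actually used.

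For \textbf{hemicontinuity}, I need to show that for each fixed $u,v,w\in H^2_\ast$ the scalar map
\[
\lambda\mapsto \langle D(u+\lambda v),w\rangle = g\bigl(\|u+\lambda v\|_0\bigr)\,(u+\lambda v,w)
\]
is continuous from $\R$ into $\R$. The function $\lambda\mapsto \|u+\lambda v\|_0^2$ is a polynomial in $\lambda$, hence $\lambda\mapsto \|u+\lambda v\|_0$ is continuous, and by continuity of $g$ on $\R_+$ so is $\lambda\mapsto g(\|u+\lambda v\|_0)$. The factor $\lambda\mapsto (u+\lambda v,w)$ is affine. The product of two continuous scalar maps is continuous, which finishes hemicontinuity.

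The only real obstacle is recognizing the algebraic identity that isolates the scalar monotone object $s\mapsto sg(s)$; once that identity is in hand, both properties fall out by elementary arguments, and no regularity beyond continuity of $g$ and the Hilbert structure of $L^2(\Omega)$ is needed.
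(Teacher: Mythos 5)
Your proof is correct and follows essentially the same route as the paper: the paper's monotonicity claim "by monotonicity of $g(|s|)s$" is exactly the Cauchy--Schwarz-plus-factoring argument you spell out (reducing to the increasing scalar map $s\mapsto sg(s)$, which needs $g\geq0$ as you note), and the hemicontinuity check via continuity of $\lambda\mapsto g(\|u+\lambda v\|_0)$ times an affine factor is the paper's argument verbatim. No gaps; you have merely supplied the details the paper compresses.
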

\begin{proof}
    Let $u,v\in H^2_\ast$ be given. By monotonicity of $g(|s|) s $ one concludes.
                \begin{equation*}\begin{aligned}
                     \dual{Du\!-\!Dv}{u\!-\!v} 
                         &\!= \!(g(\|u\|_0)u \!-\! g(\|v\|_0)v,u\!-\!v)_\Omega\! \geq 0. 
                         %\!b_0\|u-v\|_0^2+\sum_{j=1}^{q}b_j(\|u\|_0^j u \!-\! \|v\|_0^j v , u\!-\!v)\\
                         %&\geq b_0 \|u-v\|_0^2
                          %+\sum_{j=1}^{q}b_jC_j\|u-v\|_0^{j+2}\geq 0.
    \end{aligned}\end{equation*}
 %   In the last inequality, we have used the inequality $\left(\abs{a}^{m-1}a-\abs{b}^{m-1}b\right) \cdot (a-b) \geq C\abs{a-b}^{m+1}$ for $C>0$, for all $m>0$, and $a,b \in \mathbb{R}$.
    %see Aouadi for this type of inequality
    which shows that $D$ is monotone. Regarding the hemicontinuity, it follows from continuity of  $g(s)$ on $\R_+$ 
    % \eqref{g_func} 
    that $g$ is continuous on $L_2(\Omega)$   and therefore the map is also continuous$$\lambda\mapsto\dual{D(u+\lambda v)}{v}=g(\|u+\lambda v\|_0)[(u,v)_\Omega+\lambda\|v\|_0^2],$$ 
    where $u,v\in H^2_\ast$ are arbitrarily taken. This shows that $D$ is hemicontinuous.
\end{proof}

\begin{lemma}\label{lem:op_F}
    Assume that $f_0\in C^1(\R)$. Then, $F:H^2_\ast\to L^2(\Omega)$ is locally Lipschitz.
\end{lemma}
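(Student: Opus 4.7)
The plan is to decompose $F$ as
\[
F(u) = -\alpha u_{xx} + \delta \|u_x\|_0^2 u_{xx} - \kappa u^+ - f_0(u) - \beta u_y,
\]
and estimate $\|F(u)-F(v)\|_0$ for $u,v$ restricted to a ball $B_R = \{w \in H^2_\ast : \|w\|_{2,\ast} \le R\}$, controlling each term by a constant $C(R)$ times $\|u-v\|_{2,\ast}$.

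First I would dispose of the linear pieces: by definition of the Sobolev norm, $\|\alpha u_{xx} - \alpha v_{xx}\|_0 + |\beta|\,\|u_y - v_y\|_0 \le C\|u-v\|_{2,\ast}$, requiring no hypothesis on $R$. Next, the Berger-type cubic term: using the splitting
\[
\|u_x\|_0^2 u_{xx} - \|v_x\|_0^2 v_{xx}
= \|u_x\|_0^2 (u_{xx} - v_{xx}) + \bigl(\|u_x\|_0^2 - \|v_x\|_0^2\bigr) v_{xx},
\]
and the continuity $\|w_x\|_0 \le C\|w\|_{2,\ast}$, both summands are bounded in $L^2(\Omega)$ by $C(R)\|u-v\|_{2,\ast}$ on $B_R$. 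The hinge term $\kappa u^+$ uses the global 1-Lipschitz property of $s \mapsto s^+$ on $\R$, giving $\|\kappa u^+ - \kappa v^+\|_0 \le \kappa \|u-v\|_0 \le C\|u-v\|_{2,\ast}$.

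The only term requiring some care is $f_0(u) - f_0(v)$. The key observation is the Sobolev embedding $H^2_\ast(\Omega) \hookrightarrow L^\infty(\Omega)$, valid since $\Omega \subset \R^2$; in particular there is $C_\infty$ such that $\|w\|_{L^\infty(\Omega)} \le C_\infty \|w\|_{2,\ast}$. Hence for $u,v \in B_R$ the values $u(x), v(x)$ lie in the compact interval $I_R = [-C_\infty R, C_\infty R]$ almost everywhere. Since $f_0 \in C^1(\R)$, the quantity $L(R) := \sup_{s \in I_R} |f_0'(s)|$ is finite, and the mean value theorem yields pointwise
\[
|f_0(u(x)) - f_0(v(x))| \le L(R)\,|u(x) - v(x)|.
\]
Integrating gives $\|f_0(u) - f_0(v)\|_0 \le L(R)\|u-v\|_0 \le L(R)\|u-v\|_{2,\ast}$. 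Combining all five estimates produces a local Lipschitz constant for $F$ on $B_R$, concluding the proof.

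The only mild obstacle is making sure the $L^\infty$ embedding is available; this is where the two-dimensional geometry of $\Omega$ and the structure of $H^2_\ast$ enter. Everything else is routine algebra and the $C^1$ regularity of $f_0$.
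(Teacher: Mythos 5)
Your proposal is correct and follows essentially the same route as the paper: the same term-by-term decomposition, the same splitting of the Berger term $\|u_x\|_0^2u_{xx}-\|v_x\|_0^2v_{xx}$, the $1$-Lipschitz bound for $s\mapsto s^+$, and the mean value theorem combined with the embedding $H^2_\ast\subset L^\infty(\Omega)$ to handle $f_0(u)-f_0(v)$. No gaps.
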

\begin{proof}
    Let $R>0$ and $u,v\in H^2_\ast$ such that $\|u\|_{2,\ast},\|v\|_{2,\ast}\leq R$. Hence, using the representation of   $F$
    %(see Section \ref{ssct:Not}),
    we have
    \begin{align}
        \|Fu-Fv\|_0
        &\leq \kappa\|u^+-v^+\|_0
        + \alpha\|u_{xx}-v_{xx}\|_0
        + \delta\|\|u_x\|_0^2u_{xx} - \|v_x\|_0^2v_{xx}\|_0
        + \|f_0(u)-f_0(v)\|_0 \notag\\
        &+ |\beta|\|u_y-v_y\|_0 \notag\\
        &\leq
        \kappa \|u-v\|_0
        + \alpha\|u_{xx}-v_{xx}\|_0
        + \delta\|u_x\|_0^2\|u_{xx}-v_{xx}\|_0
        + \delta\|v_{xx}\|_0\abs{\|u_x\|_0^2 - \|v_x\|_0^2} \notag\\
       & + \left\{\max_{|s|\leq2R}|f_0'(s)|^2\right\}^{1/2}\|u-v\|_0
        + |\beta|\|u_y-v_y\|_0 \notag\\
        &\hspace*{3pt} \leq \underbrace{\left[\kappa+|\alpha|+\delta R(1+2R)+ \left\{\max_{|s|\leq2R}|f'_0(s)|^2\right\}^{1/2} + |\beta|\right]}_{L_{f,R}}\|u-v\|_{2,\ast}
    \end{align}
    and consequently $\|Fu-Fv\|_0\leq L_{f,R}\|u-v\|_{2,\ast}$, for every $u,v\in H^2_\ast$. This shows that $F$ is locally Lipschitz from $H^2_\ast$ to $L^2(\Omega)$. 
\end{proof}

%In order to establish Theorem \eqref{thm:EU}, 
We are now in position to conclude the proof of the  well-posedness for system \eqref{sbe}-\eqref{IC}.

\begin{proof}[Proof of Theorem \ref{thm:EU}:]
    Considering the abstract form \eqref{sbe_abs}, it is known that $A$ is closed, linear positive and self-adjoint acting on $H^2_\ast$. Lemmas \ref{lem:op_D} and \ref{lem:op_F} above ensures that $D$ is monotone hemicontinuous while $F$ is locally Lipschitz from $H^2_\ast$ into $(H^2_\ast)'$. Furthermore,  $N$ is globally Lipschitz from $H^2_\ast$ into $(H^2_\ast)'$ (see Lemma \ref{lem:op_F}). The Lemmas \ref{lem:LE} and \ref{lem:UE} together with the above mentioned properties of the operators $A, D, F \mbox{ and } N $ verify the assumption 2.4.15 in \cite{ChuLas10} page 77, therefore, applying Theorem 2.4.16 page 78, we conclude that for every $T>0$ and every $\mbf{u}_0=\{u_0,u_1\}\in \cD(A)\times H^2_\ast$, there exists a unique strong solution $\mbf{u}=\{u,u_t\}$ in the class \eqref{reg} and satisfying the energy identity \eqref{nrg_id}. The regularity $D(A) \in H^4(\Omega) $ follows from \cite{Gazzola15}.\\
Furthermore, since $D$ is a bounded operator on bounded sets of $L^2(\Omega)$ (see Proposition 2.4.21 in \cite{ChuLas10} page 83), we conclude that if $\mbf{u}_0\in\cH$ then there exists a unique generalized solution $\mbf{u}$ satisfying the energy identity \eqref{nrg_id}. This implies Hadamard wellposedness [continuous dependence on the data] of all generalized solutions. Moreover, each generalized solution satisfies standard variational form with test functions in $H^2_\ast$.  \\
   % The second part of the Theorem--strong solutions--follows now from the fact that $D(A) \in H^4(\Omega) $-- \cite{Gazzola15}.
  %  \textcolor{red}{For the wellposedness part, we referred Von Karman book here, should we change it to Lions?}
\end{proof}

\section{Attractor--Proof of Theorem \ref{thm:Att}.}\label{prfAtt}

The proof of Theorem \ref{thm:Att} requires several steps and will be divided into the following subsections.
\ifdefined\xxxxxxx
%XXXXXXXXXXXXXXXXXXXXXXXXXXXXXXXXXXXXXXXXXXXX Referee suggested that we move this definition to section 2
\textcolor{red}{Having in mind the theory of dynamical systems, the existence of global attractors, which describes the asymptotics of solutions to \eqref{sbe}-\eqref{IC}, can be established by means of two basic concepts, namely [1]. \textit{ultimate dissipativity} and [2]. \textit{asymptotic smoothness/compactness} of the evolution operator. For the reader's convenience, we recall the definitions below.
\begin{definition}\label{def:Diss}
    A closed set $B\subset\cH$ is said to be \textit{absorbing} for $S_t$ if for any bounded set $B_0\subset\cH$, there exists $t_0=t_0(B_0)>0$ such that $S_t{B_0}\subset B$ for every $t\geq t_0$. Moreover, the evolution operator $S_t$ is said to be (bounded) \textit{dissipative} if it possesses a bounded absorbing set $B\subset\cH$.
\end{definition}
\begin{definition}[\bf Asymptotic smoothness]\label{def:AS}
	The evolution operator $S_t$ is said to be \textit{asymptotically smooth} if the following condition holds: for every bounded set $B_0\subset\cH$ such that $S_tB_0\subset B_0 $ for $t>0$ there exists a compact set $K\subset\overline{B_0}$ such that $S_tB_0\subset B_0$ converges uniformly to $K$ in the sense that
 \begin{equation*}
     \displaystyle\lim_{t\to\infty}\sup_{\mbf{y}\in B_0}dist_\cH(S_t\mbf{y},K)=0.
 \end{equation*}
\end{definition}}
%XXXXXXXXXXXXXXXXXXXXXXXXXXXXXXXXXXXXXXXXXXXXXX
\fi 
The first part of Theorem \ref{thm:Att}--Part I--will be obtained by using Theorem 2.3.5 on \cite{Chu15} page 63,  as soon as   one shows that $\{\cH,S_t\}$ is an {\it ultimately dissipative} and  {\it asymptotically smooth} dynamical system. It is here where non-conservative effects  along with an overdamping are the game changers. The second part of Theorem \ref{thm:Att}--Part II--will be proved by establishing quasi-stability property. 

\begin{remark}
    In fact, Theorem 2.3.5 in \cite{Chu15} is stated for asymptotic compact dynamical systems (see Definition 2.2.1 in \cite{Chu15} page 52). However, the equivalence between asymptotic smoothness and compactness is established in Proposition 2.2.4 in the same reference.
\end{remark}
\subsection{Proof of Part I of Theorem \ref{thm:Att}} 

As a very first step toward  the proof of the existence of attractors, we will establish an  existence of a ``weakly compact'' attractor. For this, we need to assert the existence of a positively invariant bounded  {\it absorbing}  set for the evolution operator $S_t$. Due to the ``lack of dissipativity'', evidenced by the presence of longitudinal wind flow [case $\beta\ne 0$], the system is {\it not of the gradient type.} This forces one to construct a suitable absorbing ball. At this point, the main challenge occurs. It suffices to note that standard approaches based on the construction of a suitable Lyapunov function are not adequate. This is mainly due to the presence of a {\it non-conservative term} along with the presence of a {\it non-linear damping}. In fact, the nonconservative term alone has been discussed recently in \cite{BGLW22}  by constructing a suitable Lyapunov function. However, this construction fails when dealing simultaneously with non-linear damping. New methodology must be developed. We shall first proceed at the abstract level in order to single out the difficulty. We will develop an abstract result subject to a number of well-calibrated technical assumptions. In the final step, we shall show that these assumptions are satisfied by the bridge model. At the same time, the developed methodology may be applicable to other problems that are genuinely non-dissipative with a nonlinear dissipation-a large class of applied models of general interest.

\subsubsection{Ultimate Dissipativity -- Existence of weak attractor}

 The construction of the absorbing set will be based on the following more general result given in Theorem \ref{0.5.1}, which specifically addresses the situation of {\it  non-conservative  and non-dissipative  dynamics with nonlinear damping}.

 \paragraph{Ultimate Dissipativity for  a model with non-conservative force -- Abstract framework:} 
 
Consider the following model  -- as in (\ref{sbe_abs} ) under Assumption \ref{asp1} and  Assumption \ref{asp2} stated below:
\begin{align}\label{model2}
    \begin{cases}
    u_{tt}(t)+\cA u(t)+kD(u_t(t))=F(u(t)),\\
    u|_{t=0}=u_0 \in \cD(\cA^{1/2}), u_t|_{t=0}=u_1 \in L_2(\Omega) 
    \end{cases}
\end{align}
where $\cA$ is closed, positive, self-adjoint operator acting on  $L_2(\Omega) $, with $\cD(\cA) \subset L_2(\Omega) $. 
% and $V$ is another Hilbert space such that, $\cD(\cA^{1/2}) \subset V \subset \cH \subset V^{\prime} \subset \cD{(\cA^{1/2}})^{\prime}$. Here, all injections are continuous and dense and $M \in L(V, V^{\prime})$. 
We will use a standard notation: $||u||\equiv |u|_{L_2(\Omega)} $, $(u,v)\equiv (u,v)_{L_2(\Omega)}$ for any $u, v\in L_2(\Omega)$. 

\begin{asp}\label{asp1}
\begin{itemize}[noitemsep]
    \item [i)] The operator 
    $D: \cD(\cA^{1/2} ) \rightarrow [\cD(\cA^{1/2} )]' $ is assumed monotone and hemicontinuous with
    $D(0) =0, $
    
    \item  [ii)] the nonlinear operator
    $F:\cD(\cA^{1/2}) \to  L_2(\Omega)  $ is locally Lipschitz and  has the form, $F(u)=-\Pi^\prime(u)+ N(u)$  where $\Pi(u)=\Pi_0(u)+\Pi_1(u)$, is a $C^1$ functional on $\cD(\cA^{1/2})$, $\Pi^\prime$ stands for the Frechet Derivative of the functional $\Pi$. We also assume that,
    \begin{itemize}[noitemsep]
        \item [a)] $\Pi_0(u) \geq 0$ is locally bounded on $\cD(\cA^{1/2})$,
        \item [b)] for every $\eta >0$ there exists a constant $c_\eta \geq 0$ such that,
                \begin{equation}\label{1.5}%equation 1.5 on memoir
                     |\Pi_1(u)| \leq \eta \cdot \left(|\cA^{1/2}u|^2+\Pi_0(u) \right)+c_\eta \text{ for $u \in \cD(\cA^{1/2}),$}       
                \end{equation}
    \end{itemize}
    \item [iii)] the mapping $N(u)$  satisfies the following bound:
                \begin{equation}\label{A1-F.AST}
                         (N(u), u_t) \leq c_1+c_2\big[ ||\mbf(u) ||^2_{\cH } +  \Pi_0(u)\big] + k_0(Du_t,u_t),
                \end{equation} 
       for any $u, u_t \in \cD(\cA^{1/2})$, where $c_1, c_2 > 0$ and $0 \leq k_0 < k$ are constants.
\end{itemize}
\end{asp}
The dynamical system $(\cH,S_t)$ governed by the above model under Assumption \ref{asp1} is wellposed  on $\cH= \cD(\cA^{1/2} ) \times L_2(\Omega) $ and satisfies the following energy relation,
\begin{equation*}
 \cE(u(t),u_t(t))+k\int_0^t(Du_t(s),u_t(s))ds= \cE(u_0,u_1)+\int_0^t(N(u(s)),u_t(s))ds
\end{equation*} 
\begin{equation*}
\text{where, }
\begin{cases}
     E(u(t),u_t(t))=\frac{1}{2}\left[||\cA^{1/2}(u)||^2+ ||u_t||^2\right]+\Pi_0(u), \\
    \cE(u(t),u_t(t))=E(u(t),u_t(t))+\Pi_1(u). 
\end{cases} 
\end{equation*}
In addition, we impose the following  hypotheses:
\begin{asp}\label{asp2}
\begin{itemize}[noitemsep]
    \item [i)] There exist constants $c_0 \geq 0$ and $c_1 >0$ such that with $0\leq \gamma <1.$
    \begin{equation}\label{A1}
    ||u_t||^2  \leq c_0 +c_1(Du_t,u_t) \text{ for any } u_t\in \cD(\cA^{1/2}),  
    \end{equation}
   \item [ii)] there exists $0 \leq \eta <1$, constants $c_2>0$ 
               and $c_3, c_4 \geq 0$ such that for any $u, u_t \in \cD(\cA^{1/2})$
                     \begin{align}\label{A2}
                            -(D u_t,u)+( F^\ast(u), u) \!\leq\!  (\Pi ^{\prime}(u),u)+\eta|\cA^{1/2}u|^2-c_2\Pi_0(u)+c_3 +c_4.\left[ 1+E(u,u_t)\right]^{\gamma}(D u_t,u_t),
                      \end{align}   
     \item [iii)] we assume the relation
                     \begin{equation}\label{A3b}
                            ( N(u), u_t) \leq \eta \kappa(Du_t,u_t)+\delta E(u,u_t)+b(\frac{1}{\delta}),
                     \end{equation}
        for some number $0\leq \eta <1$ and a non decreasing continuous function  $b: \mathbb{R}_+\to \mathbb{R}_+$ such that it satisfies the following {balancing condition }
          \begin{equation}\label{A3a}
                 \lim_{x\to \infty}\Big\{x^{1-\frac{1}{\gamma}}b(x)\Big\}=0,
          \end{equation}
         in the case of $\gamma>0$ where $\gamma$ is the parameter from assumption above and for any $0< \delta \leq 1$. 
\end{itemize}
\end{asp}

\begin{remark}\label{remark5}
In the case where the damping is linear $D(u_t) = k u_t$, one can take $\gamma = 0$. In this case the balancing condition \eqref{A3a} is automatically satisfied.

Note that conditions (i) and (iii) refer to the estimates of kinetic energy. Condition (ii) instead, refers to the equipartition of potential and kinetic energy. The key player is the parameter $\gamma < 1 $  along with the ``balancing condition''  in  \eqref{A3a},the latter  quantifies the interplay between the loss of kinetic energy due to nondissipativity and the loss of potential energy due to an overdamping. The presence of genuinely non-linear damping introduces potential energy on the right-hand side of \eqref{A2} with a constant power $\gamma$. A priori, this quantity is unbounded. On the other hand, the nondissipative effect of $N(u)$ on the right side of the inequality \eqref{A3b} leads to the term ``b(x)'', which may be growing if $\delta$ is small - a quantity needed in absorbing potential energy. This leads to the competing effects caused by the lack of dissipativity of the term $N(u)$ and non-linear damping as exhibited in \eqref{A2} and \eqref{A3b}. The balancing condition \eqref{A3a} is critical in achieving the equilibrium. Also note that the case where $\gamma=0$, which is for linear damping, \eqref{A3a} is trivially true, and in the absence of a non-dissipative term, one can take $b(s)\! =\!0$. So, the classical cases of the analysis in dissipative dynamical systems or the non-dissipative dynamical systems with linear damping are also included in the present generalization.\\
%The authors would like to express thanks to Vittorino Pata  for a discussion on the subject. 
\end{remark}

\begin{theorem}{\bf Abstract Theorem.}\label{0.5.1}
     Under the Assumptions \ref{asp1} and \ref{asp2} above, the system $(\cH, S_t)$ generated by the model \eqref{model2} in the
energy space $\cH$ is ultimately dissipative
%for each $k > 0$, 
i.e. there exists $R_0 \!> \!0$ possessing the property: 
for any bounded set B from $\cH$ there exists $t_0\! = \!t_0(B)$
such that $\|S_ty\|_\cH \!\leq \! R_0$ for all $y \in B$ and $t \!\geq \!t_0$. Moreover, there exists a forward invariant bounded absorbing set $\cB_0 \subset \cH$.
\end{theorem}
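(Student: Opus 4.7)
The strategy combines the two canonical multipliers, $u_t$ (energy identity) and $u$ (equipartition), with a barrier/contradiction argument whose feasibility hinges on the balancing condition \eqref{A3a}. First I would write the two integral identities on an interval $[s,t]$: the energy identity
$$\cE(t) + k\int_s^t (Du_\tau,u_\tau)\,d\tau = \cE(s) + \int_s^t (N(u),u_\tau)\,d\tau,$$
and, after multiplying the equation by $u$, the equipartition identity
$$\int_s^t\!\bigl[\,|\cA^{1/2}u|^2+(\Pi'(u),u)\bigr]\,d\tau = (u_t,u)\big|_t^s + \int_s^t\!|u_\tau|^2\,d\tau - k\!\int_s^t(Du_\tau,u)\,d\tau + \int_s^t(N(u),u)\,d\tau.$$

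Next I would process these using \eqref{A1}--\eqref{A3b}. In the energy identity, A3 delivers, for any $\delta\in(0,1]$,
$$\cE(t) + k(1-\eta)\!\int_s^t(Du_\tau,u_\tau)\,d\tau \;\le\; \cE(s) + \delta\!\int_s^t E(\tau)\,d\tau + (t-s)\,b(1/\delta).$$
In the equipartition identity, applying A2 with $F^*=N$ absorbs the cross term $-k(Du_t,u)+k(N(u),u)$ into $\Pi'$, $|\cA^{1/2}u|^2$ and the overdamping term $c_4(1+E)^\gamma(Du_t,u_t)$; then A1 handles $|u_\tau|^2$ by $c_0+c_1(Du_\tau,u_\tau)$, and the boundary contribution $|(u_t,u)|$ is Cauchy--Schwarz'd into $CE(t)+CE(s)$. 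Using the bound $|\cA^{1/2}u|^2+\Pi_0(u)\ge 2E-|u_t|^2$ together with \eqref{1.5} to subsume $\Pi_1$, this yields an estimate of the form
$$c_\ast\!\int_s^t E(\tau)\,d\tau \;\le\; C\bigl[E(s)+E(t)\bigr] + C\bigl(1+\sup_{[s,t]}E\bigr)^{\gamma}\!\int_s^t(Du_\tau,u_\tau)\,d\tau + C(t-s).$$

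Now I would combine the two inequalities. The damping integral in the second estimate is controlled by the first estimate, so substituting one into the other and using the equivalence $C_0 E-C_1\le\cE\le C_2 E+C_3$ yields, after choosing $s$ large and fixing a convenient window $[s,s+T]$,
$$E(t) + \tilde c \int_s^t E\,d\tau \;\le\; C\,E(s) + C(1+M)^{\gamma}\bigl[\,\delta\!\int_s^t E + (t-s)b(1/\delta)\,\bigr] + C(t-s),\qquad M:=\sup_{[s,t]}E.$$
Here is the crux: set $\delta = (1+M)^{-\gamma}\,\varepsilon$ for a small $\varepsilon>0$ so the $\delta$-integral is absorbed on the left; the remaining obstruction term is $C(1+M)^\gamma (t-s)\,b\bigl(\varepsilon^{-1}(1+M)^\gamma\bigr)$, which by the balancing condition \eqref{A3a} is $o(M)$ as $M\to\infty$ (since $x^{1-1/\gamma}b(x)\to 0$ implies $(1+M)^\gamma b((1+M)^\gamma)=o(M)$). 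Supposing for contradiction that $E$ is not ultimately bounded, one picks $s_n\to\infty$ with $E(s_n)\to\infty$; the above inequality then forces $M_n:=\sup_{[s_n,s_n+T]}E \le \tfrac12 M_n + o(M_n)$ for $n$ large, a contradiction. This is the barrier mechanism.

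This furnishes $R_0$ independent of initial data and a uniform entry time $t_0(B)$ on every bounded set $B\subset\cH$ (uniformity follows from Hadamard well-posedness of Theorem~\ref{thm:EU} applied on the finite interval needed to reach the bound). Finally, the forward invariant absorbing set is obtained as $\cB_0 = \overline{\bigcup_{t\ge t_0(B_{R_0})} S_t \overline{B_{R_0}}}^{\,\cH}$, which is bounded by construction and satisfies $S_\tau \cB_0\subset \cB_0$ for all $\tau\ge 0$. I expect the main obstacle to be the bookkeeping in Step~3, specifically matching the exponent $\gamma$ produced by A2 against the growth rate of $b(\cdot)$ in A3 via the precise choice $\delta=\varepsilon(1+M)^{-\gamma}$; this is where \eqref{A3a} is genuinely needed and where the non-conservative/overdamping interaction either closes or fails to close.
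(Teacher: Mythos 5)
Your strategy (multiplier $u$ for equipartition, multiplier $u_t$ for the energy identity, a smallness parameter tuned to the current energy level, and the balancing condition \eqref{A3a} to beat the weight $(1+E)^\gamma$) is the right philosophy, and it is close in spirit to the paper, which packages the same two multipliers into the perturbed functional $V_\varepsilon=\cE+\varepsilon(u_t,u)$. But your execution has a genuine gap exactly at the step you call the crux. When you control the damping integral $\int_s^t(Du_\tau,u_\tau)\,d\tau$ appearing in the equipartition estimate (weighted by $C(1+M)^\gamma$) by substituting the energy identity, the honest outcome is not the inequality you display: the substitution produces a contribution of size $C(1+M)^\gamma\bigl[\cE(s)-\cE(t)\bigr]\lesssim C(1+M)^\gamma E(s)$, not $C\,E(s)$. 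Along your contradiction sequence $E(s_n)\to\infty$ one has $M_n\geq E(s_n)$, so this term can be of order $M_n^{1+\gamma}$, which is not $o(M_n)$ and cannot be absorbed; the balancing condition only neutralizes the $b(1/\delta)$ term. Moreover, even granting your displayed inequality, the conclusion $M_n\leq\tfrac12 M_n+o(M_n)$ does not follow: the constant multiplying $E(s_n)$ is not $\leq 1/2$, $E(s_n)$ may be comparable to $M_n$, and the left-hand side only controls $M_n$ if $t$ is taken at a near-maximizer of $E$ on the window, at which point the $E(s_n)$-term is still unabsorbed. Your two integral inequalities contain no contraction or decay in the dependence on $E(s)$, and that decay is precisely what the theorem requires.

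The paper avoids the substitution altogether: in the differential inequality for $V_\varepsilon$ the good term $-\kappa(1-\eta)(Du_t,u_t)$ and the bad term $+\varepsilon c_4[1+E]^\gamma(Du_t,u_t)$ sit in the same inequality, and the barrier argument — with $\varepsilon=\sigma(E(s))^{-1}$ fixed through the implicit equation \eqref{sigma}, whose solvability is where \eqref{A3a} first enters — shows the combined coefficient stays nonpositive for all $t\geq s$. This yields $V_\varepsilon(t)\leq e^{-\varepsilon(t-s)}V_\varepsilon(s)+d_0\{1+\varepsilon^{-1}b(d_1/\varepsilon)\}$, i.e.\ genuine decay of the initial-data dependence, with no $(1+M)^\gamma E(s)$ term ever appearing. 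A second, separate contradiction argument (the paper's Step 3, on $W_R(\infty)$, using \eqref{A3a} a second time) is then needed to make the ultimate bound independent of $R$ and the entry time uniform on bounded sets; your appeal to Hadamard well-posedness does not supply this, because the decay rate $\varepsilon$ degenerates as the data grow. To salvage your route you would have to combine the two estimates additively (add $\varepsilon$ times the equipartition estimate to the energy identity with $\varepsilon\sim(1+M)^{-\gamma}$, so the damping integrals cancel instead of being substituted), then still manufacture a contraction of the form $E(s+T)\leq\theta E(s)+\mbox{small}$ and carry out the $R$-uniformity step — at which point you have essentially reconstructed the paper's Lyapunov/barrier proof.
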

\begin{proof} 
The proof of Theorem \ref{0.5.1} is technical [given in the next section]. It is based on the so called: ``barrier's method'', and a careful construction of energy functional with the parameters determined via a suitable ``fixed'' point argument \cite{Memoir}.
In doing this, a critical fact is played by the requirement $\gamma\! < \!1 $ and the balancing condition \eqref{A3a}. This allows to iterate the  fixed point argument and to obtain sufficient a-priori and global bounds for the orbits. 
The argument, which is also inspired by \cite{haraux2,haraux} adapts the strategy presented in \cite{Memoir} and is relegated to Section \ref{ultdisabs}. Since this part is fairly technical, we shall first show how to apply the general result to the model under consideration. \\The authors would like to express thanks to Vittorino Pata  for a discussion on the subject. 

\end{proof}

\paragraph{Ultimate dissipativity for the original model governed by (\ref{sbe})-(\ref{IC}) }
%%%%%%%%Changing the following corollary to a theorem as Referee's suggestion
\begin{theorem}\label{ball}
    The dynamical system $(\cH, S_t)$ generated by the model (\ref{sbe})-(\ref{IC}) is ultimately dissipative and possesses a forward invariant bounded absorbing set.
\end{theorem}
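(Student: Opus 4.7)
The plan is to realize the bridge system \eqref{sbe}--\eqref{IC} as a concrete instance of the abstract model \eqref{model2}, by setting $\cA=A$, $k=1$, $D(v)=g(\|v\|_0)v$, $N(u)=-\beta u_y$, and $\Pi$ as in \eqref{pi}, and then to invoke the abstract Theorem \ref{0.5.1}. The whole task then reduces to verifying Assumptions \ref{asp1} and \ref{asp2} for this specific choice.

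Assumption \ref{asp1} is largely inherited from earlier material. The operator $A$ is closed, positive and self-adjoint with $\cD(A^{1/2})=H^2_\ast$; monotonicity and hemicontinuity of $D$, together with $D(0)=0$, come from Lemma \ref{lem:op_D}; local Lipschitz character of $F$ is Lemma \ref{lem:op_F}; the decomposition $\Pi=\Pi_0+\Pi_1$ with $\Pi_0\geq 0$ bounded on bounded sets of $H^2_\ast$ and the quadratic bound \eqref{1.5} on $|\Pi_1|$ is exactly Proposition \ref{P:1}. The remaining piece \eqref{A1-F.AST} follows from $|(N(u),u_t)|\leq |\beta|\,\|u_y\|_0\|u_t\|_0$ via Cauchy--Schwarz and Young, combined with $\|u_y\|_0^2\leq CE(\mbf{u})$ and the coercivity of $(Du_t,u_t)$ on $\|u_t\|_0^2$ provided by \eqref{aspg}.

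For Assumption \ref{asp2}, condition \eqref{A1} is immediate from $(Du_t,u_t)=b_0\|u_t\|_0^2+b_q\|u_t\|_0^{q+2}$ together with the elementary bound $s^2\leq 1+s^{q+2}$. The equipartition estimate \eqref{A2} is obtained by testing the equation with $u$: the critical cross term $|(Du_t,u)|$ is split into its linear and superlinear contributions, and the superlinear part $b_q\|u_t\|_0^{q+1}\|u\|_0$ is controlled through a weighted Young inequality with conjugate exponents $(q+2,(q+2)/(q+1))$ and a weight optimized against $(1+E(\mbf{u}))$; this produces a contribution of the form $c_4(1+E(\mbf{u}))^{\gamma}(Du_t,u_t)$ with
\[
\gamma=\frac{q+2}{2(q+1)}\in\left(\tfrac{1}{2},\tfrac{3}{4}\right],
\]
strictly less than $1$ as required by the abstract theorem. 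The residual $\|u\|_0^{q+2}\lesssim(1+E)^{(q+2)/2}$ is exactly cancelled by the optimized weight, while the other terms fit into $\eta\|A^{1/2}u\|_0^2$, $(\Pi'(u),u)$, and $-c_2\Pi_0(u)$, the latter exploiting $\Pi_0(u)\geq\tfrac{\delta}{8}\|u_x\|_0^4$ from Proposition \ref{P:1}.

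The decisive step, and the main obstacle, is the non-conservative bound \eqref{A3b} coupled with the balancing condition \eqref{A3a}. Starting from $|\beta(u_y,u_t)|\leq\eta(Du_t,u_t)+C_\eta\|u_y\|_0^2$ via Young and \eqref{aspg}, I would apply Proposition \ref{P:0} with $s=1$ to write $\|u_y\|_0^2\leq\tilde\eta(\|u\|_{2,\ast}^2+\|u_x\|_0^4)+C_{\tilde\eta}$, and then select $\tilde\eta\sim\delta/C_\eta$ to arrive at $(N(u),u_t)\leq\eta(Du_t,u_t)+\delta E(\mbf{u})+b(1/\delta)$. The heart of the matter is to secure $\lim_{x\to\infty}x^{1-1/\gamma}b(x)=0$ with $1/\gamma-1=q/(q+2)>0$; this requires a quantitatively sharp version of Proposition \ref{P:0}, tracing the dependence of $C_{\tilde\eta}$ on $\tilde\eta$ by chaining the anisotropic Poincaré inequality $\|u\|_0\leq\pi\|u_x\|_0$ with a Gagliardo--Nirenberg interpolation, so that the resulting growth of $b$ stays within the admissible envelope $x^{q/(q+2)}$. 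Once all four hypotheses above are verified, Theorem \ref{0.5.1} delivers precisely the conclusion of Theorem \ref{ball}: ultimate dissipativity of $(\cH,S_t)$ together with a forward invariant bounded absorbing set.
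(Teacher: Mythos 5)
Your overall strategy is the same as the paper's: cast \eqref{sbe}--\eqref{IC} as an instance of \eqref{model2} and verify Assumptions \ref{asp1}--\ref{asp2} so that Theorem \ref{0.5.1} applies, and your treatment of Assumption \ref{asp1} and of \eqref{A1} is fine. The genuine gap is quantitative and sits exactly at the point you identify as "the heart of the matter." By estimating the cross term $b_q\|u_t\|_0^{q+1}\|u\|_0$ with a plain weighted Young inequality against the full factor $\|u\|_0$, you obtain $\gamma=\frac{q+2}{2(q+1)}>\frac12$, so the balancing condition \eqref{A3a} forces $b(x)=o\bigl(x^{q/(q+2)}\bigr)$ with $q/(q+2)<1$. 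No such $b$ exists for this problem. Indeed, the non-conservative term requires bounding $\|u_y\|_0$ (paired with $\|u_t\|_0$) by $\delta E+b(1/\delta)$, and the only structural control is $\|u_y\|_0^2\leq \|u\|_0\|u_{yy}\|_0\lesssim \|u_x\|_0\|u\|_{2,\ast}$, whose worst case against $E\sim \|u\|_{2,\ast}^2+\|u_x\|_0^4$ is $\|u_y\|_0^2\sim E^{3/4}$; Young duality then pins the growth of $b$ at $x^{3(q+2)/(5q+2)}$ (this is exactly the exponent in Lemma \ref{L5.2}), which exceeds $x^{q/(q+2)}$ for every $q\leq 6$. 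Worse, your specific route for \eqref{A3b} (exponent-$2$ Young producing $C_\eta\|u_y\|_0^2$, then Proposition \ref{P:0} with $s=1$) gives $b(x)\sim x^{3}$, since the sharp blow-up of the constant there is $C_{1,\tilde\eta}\sim \tilde\eta^{-3}$, again by scaling; so your proposed fix, "a quantitatively sharp version of Proposition \ref{P:0}," cannot work — the offending exponents are dictated by homogeneity, not by loose constants.

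The paper resolves this not on the $b$ side but on the $\gamma$ side, in Lemma \ref{lemmaabs2}: in the cross term one writes $\|u\|_0=\|u\|_0^{\frac{j}{j+2}}\|u\|_0^{\frac{2}{j+2}}$ and pairs only the factor $\|u\|_0^{\frac{j}{j+2}}$ with $\|u_t\|_0^{j+1}$ in the Young step, sending the leftover $\eta\|u\|_0^{2}$ into the absorbable terms $\eta\,a(u,u)+\eta\|u_x\|_0^4$ via Proposition \ref{P:0}. This yields $\gamma=\frac{q}{2(q+1)}<\frac12$, so the admissible envelope becomes $x^{(q+2)/q}$ with exponent larger than $1$, and the natural $b(x)\sim x^{3(q+2)/(5q+2)}$ from Lemma \ref{L5.2} satisfies \eqref{A3a} for all $q\geq1$. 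Without this refinement your verification of Assumption \ref{asp2} does not close, and the application of Theorem \ref{0.5.1} fails.
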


\begin{proof}
   The argument is based on a suitable  application of Theorem \ref{0.5.1}. This accounts to verification of the Assumptions \ref{asp1} and \ref{asp2} within the context of our model. 
Recall  that in our case; \(\cA=A, k=1 \), and the operators $ D, F, N,\Pi $ take the following form: 
          \begin{align*}
                       &D({u_t}) = g(\|u_t\|_0)u_t=b_0u_t+ \dots  + b_q \|u_t\|^q u_t ,\quad\mbox{for $s\in\R_+$, $q\geq1$, $b_j\geq0$},  b_j >0  ~for ~some ~j \in [0,q] \\
                     &F{u}=-\kappa u^+ + (\alpha-\delta\|u_x\|_0^2)u_{xx} - f(u) - \beta u_y,\quad\mbox{for every $u\in H^2_\ast$}, \\
                         &\Pi(u) = \frac{\kappa}{2}\|u^+\|_0^2-\frac{\alpha}{2}\|u_x\|_0^2+\frac{\delta}{4}\|u_x\|_0^4  + \int_\Omega \tilde{f_0}(u) \quad\mbox{for}~u\in H^2_\ast, \quad \mbox{and } N(u) = -\beta u_y.
            \end{align*}
   
In  Section \ref{thm:EU}, we have already shown that $D$ is monotone hemicontinuous and $F$ is locally Lipschitz. The assumptions on $\Pi_0$ and $\Pi_1$ have also been verified in \eqref{eq1}  and  \eqref{eq3}. The operator $N$ satisfies condition \eqref{A1-F.AST}, which follows from $|\beta| \|u_y\|_0 \leq |\beta| \|u\|_{2, \ast}$ and condition (\ref{A1}) follows from the bound imposd on function $g(s)$, so that here exist constants $c_0 \geq 0$, and $c_1 > 0$ such that
          \begin{equation*}\label{abs1}
                ||u_t||^2  \leq  c_0+c_1(Du_t,u_t), \text{ for any } u_t\in \cD(\cA ^{1/2}).
              \end{equation*}
%Thus the first condition \eqref{A1} is verified.
What  is left to conclude the proof are  the last two conditions of Assumption \ref{asp2}. Lemma \ref{lemmaabs2} and Lemma \ref{L5.2} will provide the needed estimates.

\begin{lemma}\label{lemmaabs2}
 There exist constants $0\leq \eta <1$ and $c_2 > 0$; $c_3,c_4 \geq 0$ and a constant $\gamma < 1/2 $  so that the inequality (\ref{A2}) holds true for the model  \eqref{sbe} under the assumptions of Theorem \ref{0.5.1} .
 \ifdefined\xxxxx
             \begin{align}\label{abs2}
                        \mbox{so that, }-(Du_t,u)+(F^\ast(u,u_t), u) \leq &(\Pi^\prime(u),u)+\eta |A^{1/2}u|^2-c_2\Pi_0(u)\notag \\
                     &+c_3+c_4\big[1+E(u,u_t)\big]^\gamma(Du_t,u_t). 
     %\text{ where  $0\leq \eta <1.$}
                   \end{align} 
                   \fi
\end{lemma}
 
\begin{proof} Let $j > 0$, for any $1>\eta>0$, using {\it Holder's inequality} then \textit{Young's Inequality} for $p=\frac{j+2}{j+1}, \bar{p} =j+2,$
        \begin{align}\label{5.a}
                 b_j\|u_t\|_0^{j}|(u_t,u)| & \leq  b_j\|u_t\|_0^{j}\|u_t\|_0 \|u\|_0\!=\! b_j\|u_t\|_0^{j+1}\|u\|^{\frac{j}{j+2}}_0 \|u\|_0^{\frac{2}{j+2}}\leq C_{\eta, j}\|u_t\|_0^{j+2}\|u\|_0^{\frac{j}{j+1}}\!+\!\eta^{j+2}\|u\|_0^2 
                 \notag \\
                 \intertext{\mbox{thus for $\eta_j:=\!\eta^{j+2}$ and $\gamma_j:=\!\frac{j}{2(j+1)}$ and applying Proposition \ref{P:0} (for $\eta\!=\!1/2$) }}
                & \leq C_{\eta, j}b_j\|u_t\|_0^{j+2}\Big[1+ E(u,u_t)\Big]^{\gamma_j}+\eta_j \left\{\frac{1}{2} \cdot \left[ a(u,u) + \|u_x\|_0^4 \right]+C_{0,1/2} \right\} \notag \\
                  &\hspace*{-1.5cm}\leq C_{\eta, j}b_j\|u_t\|^{j+2}\Big[1+ E(u,u_t)\Big]^{\gamma_j}\!+\!\eta_j|a(u,u)| + \eta_j\|u_x\|_0^4+c_1, \quad c_1=\eta_jC_{0, 1/2}
        \end{align}
Note that for any $j\in [0,1...q]$, $\gamma_j \leq \gamma$ for $\gamma:=\frac{q}{2(q+1)},$ \vphantom{j<q implies that calculate it}hence applying the above inequality for each $j=0,1,...,q$, and choosing appropriate
$\eta$ so that $\sum \eta_j<\eta$ we obtain,
           \begin{align}\label{5.2}
                    -(Du_t,u)& \leq \max_q\{C_{q,\eta}\}\Big((b_0+b_1\|u_t\|^1+..+b_q\|u_t\|^{q})u_t,u_t\Big)\Big[1+ E(u,u_t)\Big]^{\gamma}\notag \\
                   & \quad \quad +(\eta_1+\dots+\eta_q)\left[|a(u,u)|+\|u_x\|_0^4\right]+c_1\notag \\
                       & \hspace*{-.5cm}\leq c_4(Du_t,u_t)\Big[1+ E(u,u_t)\Big]^{\gamma}+\eta|a(u,u)|+\eta\|u_x\|_0^4+c_1, \quad c_4=\max_q\{C_{q,\eta}\}.
             \end{align} 
Next we estimate the non-conservative term $N$, recall $F(u)=-\Pi^{'}(u)+N(u)$, thus
              \begin{equation}\label{F1}
                         \left(N(u), u\right)=(\Pi^{'} (u), u)+(F(u),u). 
               \end{equation}
               \begin{equation*}
                  \hspace{-3.9cm} \mbox{On the other hand, } |(-\beta u_y, u)| \leq |\beta|\|u\|_1\|u\|_0 \leq |\beta|\|u\|_1\|u\|_1  \leq  |\beta| \|u\|_1^2 \quad \mbox{and}
               \end{equation*}
             \begin{align}\label{F2}
                        (F(u),u)
                    &=-\kappa \|u^+\|_0^2+\alpha\|u_x\|_0^2-\delta \|u_x\|^4_0-(f_0(u),u)-\beta(u_y,u) \notag \\
                   &\leq-2\left(\frac{\kappa}{2} \|u^+\|_0^2+\frac{\delta}{4} \|u_x\|^4_0\right)+\alpha\|u_x\|^2-\frac{\delta}{2} \|u_x\|_0^4+\int_\Omega | f_0(u).u|+\beta\|u_y\|_0\|u\|_0 \notag \\
                      & \leq -2\Pi_0(u)\!-\frac{\delta}{2}\|u_x\|^4\!+\!(\alpha+\beta)\|u\|_1^2\!+\! \lambda \!\int_\Omega \! |u|^2 \quad \mbox{[here we used Assumption \ref{aspf-1}]}\notag \\
                    &\intertext{\mbox{thus by  Proposition \ref{P:0} for any  $\eta_1>0, \exists C_{1,\eta_1} $ so that }}
                        & \leq-2\Pi_0(u)-\frac{\delta}{2} \|u_x\|_0^4+\eta_1\left[\|u\|_0^2+\|u_x\|_0^4\right]+c_{1,\eta_1,\alpha,\beta}.
                \end{align}
\iffalse
\begin{align*}
    & lim inf \frac{f(u)}{u}>-\lambda \\
    &\implies - \frac{f(u)}{u}<\lambda \\
    &\implies -\frac{f(u)}{u}<\lambda+k \\
    &\implies -\int_\Omega f(u).u < \int k_1 u^2 < k\|u\|^2
    &\implies \int_\Omega |f(u).u| < k\|u\|^2
\end{align*}
\fi
Choosing appropriate $\eta_1$, combining \eqref{5.a}, \eqref{F1}, \eqref{F2}  the lemma  follows with $\gamma=\frac{q}{2(q+1)}<\frac{1}{2}$. 
 \end{proof}

 \begin{lemma}{[The balancing condition]}\label{L5.2}
 There exists a number $0\leq \tilde{\eta} <1$ and a non decreasing continuous function $b: \mathbb{R}_+\to \mathbb{R}_+- \{0\}$ such that 
             \begin{equation}\label{balance}
                   \lim_{s\to \infty}\Big\{s^{1-\frac{1}{\gamma}}b(s)\Big\}=0,
                \end{equation}
in the case of $\gamma>0$ where $\gamma$ is the parameter of Lemma \ref{lemmaabs2} and for any $0< \delta \leq 1$ we have the relation: $( N(u), u_t) \leq \tilde{\eta} \kappa(Du_t,u_t)+\delta E(u,u_t)+b({1/\delta}).
                $
 \end{lemma}
 \begin{proof}Using \textit{Holder's  and Young's inequalities} for $\tilde{\eta}=\frac{1}{2}(\max_j\{b_j\})$ for $p=\frac{q+1}{q+2}$ and $q=\frac{1}{q+2}$ we have,
                  \begin{align}\label{reduction1}
                         \left(N(u),u_t\right)
                         &\leq \|\beta u_y\|_0 \|u_t\|_0
                          \leq C_{\tilde{\eta}} \|\beta u_y\|_0^{\frac{q+2}{q+1}}+ \tilde{\eta} \|u_t\|^{q+2}_0\leq C_{\tilde{\eta}} \|\beta u_y\|_0^{\frac{q+2}{q+1}}+ \tilde{\eta}(Du_t,u_t)
                   \end{align}
 For the first term on the RHS, interpolation and Poincare inequality gives us, 
                   \begin{equation}\label{poincare}
                           \|u_y\|_{0}^{\frac{q+2}{q+1}} \leq  \|u\|_{0}^{\frac{q+2}{2(q+1)}}.\|\Delta u\|_0^{\frac{q+2}{2(q+1)}} \leq  C.\|u_x\|_{0}^{\frac{q+2}{2(q+1)}}.\|\Delta u\|_0^{\frac{q+2}{2(q+1)}}
                  \end{equation}
 On the last inequality above in \eqref{poincare} we have used   Poincare-type inequality, the proof of this can be found in the appendix.
 Next we estimate the RHS of \eqref{poincare} as the following by applying Young's inequality,
 {\allowdisplaybreaks
                \begin{align}\label{reduction2}
                              &C.\|u_x\|_{0}^{\frac{q+2}{2(q+1)}}.\|\Delta u\|_0^{\frac{q+2}{2(q+1)}}
                               \leq \delta \|u_x\|_{0}^4+C\delta^{\frac{-(q+2)}{7q+6}}\cdot\|\Delta u\|_0^{4.\frac{q+2}{7q+6}}\quad  \mbox{for any $\delta>0$}\quad %\mbox{where, } \frac{1}{p^*}+\frac{1}{q^*}=1
                               \notag \\
                               %& \leq C.\delta^{\frac{1}{p^*}} \|u_x\|_{0}^{\frac{q+2}{2(q+1)}}.\delta^{-\frac{1}{p^*}}\|\Delta u\|_0^{\frac{q+2}{2(q+1)}} \mbox{ where, }p^*=4.\frac{2(q+1)}{q+2}
                               %\notag\\
                               %& \leq \delta \|u_x\|_{0}^4+C_1\delta^{-\frac{q^*}{p^*}}\left[\epsilon^{\frac{1}{\tilde{p}}}\|\Delta u\|_0^{4.\frac{q+2}{7q+6}}.\epsilon^{-\frac{1}{\tilde{p}}}\right] \mbox{for any $\delta, \tilde{p}>0$}\quad \mbox{where, } \frac{1}{p^*}+\frac{1}{q^*}=1
                               %\notag\\
                               &\leq \delta \|u_x\|_{0}^4+C \cdot \delta^{\frac{-(q+2)}{7q+6}} \cdot \epsilon\|\Delta u\|_0^{2}+C\cdot \delta^{\frac{-(q+2)}{7q+6}}\epsilon^{-\frac{2(q+2)}{5q+2}} \quad \mbox{for any $\epsilon>0$}
                               %\text{ where } \frac{1}{\tilde{p}}+\frac{1}{\tilde{q}}=1 \mbox{ and, }\tilde{p}=\frac{7q+6}{2(q+2)}
                               \notag \\
                                %&\leq \delta \|u_x\|_{0}^4+C_1\delta^{-\frac{q^*}{p^*}}\left[\epsilon^{\frac{1}{\tilde{p}}}\|\Delta u\|_0^{4.\frac{q+2}{7q+6}}.\epsilon^{-\frac{1}{\tilde{p}}}\right] \mbox{for any $\delta>0$}\text{ for $\tilde{p}=\frac{7q+6}{2(q+2)}, \tilde{q}=\frac{7q+6}{5q+2}$}\notag\\
                                %&\leq \delta \|u_x\|_{0}^4+C_1\delta^{-\frac{q^*}{p^*}}\left[\epsilon\|\Delta u\|_0^{2}\right]+c.\epsilon^{-\frac{\tilde{q}}{\tilde{p}}}.\delta^{-\frac{q^*}{p^*}} \quad \mbox{for any $\epsilon>0$}\notag\\
                            %&\text{choosing $\epsilon$; $C.\delta^{\frac{-(q+2)}{7q+6}}.\epsilon=\delta \implies \epsilon=\frac{1}{C_1}. \delta^{\frac{8(q+1)}{7q+6}}$}\notag\\
                                  % &\leq \delta\left[\|u_x\|_{0}^4+\|\Delta u\|_0^{2}\right]+c.\delta^{-\frac{8(q+1)}{7q+6}.\frac{2.(q+2)}{5q+2}}.\delta^{-\frac{q+2}{7q+6}}\notag\\
                            &  \leq \delta \left[ \|u_x\|_0^4+\|\Delta u\|^2_0\right]+ \delta^{-\frac{q+2}{7q+6}(1+\frac{16(q+1)}{5q+2})} \quad \mbox{ for a fix $\epsilon=\frac{1}{C}. \delta^{\frac{8(q+1)}{7q+6}}$}
                    \end{align}}
Thus combining \eqref{reduction1}, \eqref{reduction2} we get, $\left(N(u),u_t\right)\leq \tilde{\eta}((Du_t,u_t)+\delta E(u,u_t)+b\left(\frac{1}{\delta}\right)$ 
   and  $b(s)= c_{\tilde{\eta}} \cdot s^{\frac{q+2}{7q+6}\left(1+\frac{16(q+1)}{5q+2}\right)}$ for some constant $c_{\tilde{\eta}}>0$\\
To conclude the above lemma we next claim that $\lim\Big\{s^{1-\frac{1}{\gamma}}.b(s)\Big\}
     =0$ as ${s\to \infty}$.\\
    Observe that for any $q>0$,\begin{align}
     &\lim_{s\to \infty}\Big\{s^{1-\frac{1}{\gamma}}.b(s)\Big\}
     =c_{\tilde{\eta}} \cdot \lim_{s\to \infty}\Big\{s^{1-\frac{2(q+1)}{q}}.s^{\frac{q+2}{7q+6}(1+\frac{16(q+1)}{5q+2})}\Big\} 
     \leq c_{\tilde{\eta}} \cdot \lim_{s\to \infty}\Big\{s^{1-\frac{2(q+1)}{q}}\cdot s^{1+\frac{2}{q}-\frac{1}{q^2}}\Big\}=0\notag 
\end{align}

By using the property of the balancing  function $b(s)$, we then conclude that the limit is $0$. It is important to notice that ${\Tilde{\eta}}$ can be chosen such that, $c_{\Tilde{\eta}}$ is bounded and grows slower than the function ``$b$'', because the statement of the theorem asks only for the existence of such an $\Tilde{\eta}$, in our case $\Tilde{\eta}$ can be taken as $\frac{1}{2}$.
\end{proof}

\subparagraph{Continuation of the proof of Theorem \ref{ball}:}We are now in a position to conclude the existence of Absorbing ball [pending the proof of abstract result in Theorem \ref {0.5.1} to be given later]. Our model satisfies all the Assumptions in \ref{asp1} and \ref{asp2} [precisely in the above lemmas]; hence according to Theorem \ref{0.5.1} the dynamical system $(\cH, S_t)$ is ultimately dissipative. Moreover, there exists a forward absorbing set.
\end{proof}
 
Since the dynamical system $\{\cH,S_t\}$ is ultimately dissipative, it follows from Theorem 2.3.18 on page 68 of \cite{Chu15} that that there exists a weak global attractor $\mathfrak{A}$. In fact, in this case, $\mathfrak{A}$ is bounded, weakly closed, and uniformly attracts orbits/trajectory in the weak topology of $\cH$.

\noindent In the next section, we show that $\mathfrak{A}$ is indeed a ``strong'' attractor with appropriate convergence [attracting] properties in the strong topology of the phase space. .

\subsubsection{Asymptotic Smoothness.}

In order to  establish an  existence of compact global attractor for $\{\cH,S_t\}$,  it is necessary 
 to prove the  asymptotic smoothness property of the dynamical system $\{\cH,S_t\}$. The relevant result is stated below. 
\begin{theorem}\label{Asym-sm1}
        Let assumptions on $f$ and {g}  i.e. \eqref{aspf-1}, (\ref{aspg})  be in force. Then the dynamical system $(\cH, S_t)$ generated by the PDE \eqref{sbe}-\eqref{IC} is asymptotically smooth.
    \end{theorem}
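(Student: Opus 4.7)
My plan is to verify the Chueshov--Lasiecka compactness criterion (in the form of a stabilizability inequality) for the difference of two trajectories; since we already have the bounded absorbing set from Theorem \ref{ball}, this will reduce asymptotic smoothness to a quantitative estimate on a bounded, positively invariant set $\mathcal{B}$. For $y_1,y_2\in\mathcal{B}$ let $u^i(t)=S_t y_i$, $z=u^1-u^2$, and $\mathbf{z}=\{z,z_t\}$. Then $z$ solves
$$z_{tt}+\Delta^2 z+[D(u^1_t)-D(u^2_t)]+[F_c(u^1)-F_c(u^2)]+\beta z_y=0,$$
where $F_c$ collects the conservative nonlinearities (Berger, $\kappa u^+$, $f_0$). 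Writing $E_z(t)=\tfrac12(\|z(t)\|_{2,\ast}^2+\|z_t(t)\|_0^2)$, the first step is to combine the energy identity obtained by testing with $z_t$ (the monotone term $(D(u^1_t)-D(u^2_t),z_t)\ge 0$ goes to the good side) with the equipartition identity obtained by testing with $z$, which yields an integral bound of the form $\int_s^T E_z \le C[E_z(s)+E_z(T)]+\mathrm{lot}(u^1,u^2)$.

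The subcritical part is standard: since $H^2_\ast\hookrightarrow L^\infty(\Omega)$ in dimension two, the differences $\kappa(u^{1,+}-u^{2,+})$, $f_0(u^1)-f_0(u^2)$, and the Berger difference $(\alpha-\delta\|u^1_x\|_0^2)u^1_{xx}-(\alpha-\delta\|u^2_x\|_0^2)u^2_{xx}$ are locally Lipschitz from $H^2_\ast$ to $L^2$ with constants depending only on $\mathcal{B}$. Following the compensated-compactness strategy of the Memoir \cite{Memoir}, the time integrals of $(F_c(u^1)-F_c(u^2),z_t)$ are rewritten via integration by parts in $t$ as boundary terms plus expressions involving $\partial_t[F_c(u^1)-F_c(u^2)]$ paired with $z$; by Proposition \ref{P:0} and the compact embedding $H^2_\ast\Subset H^{2-\sigma}$, these are compact lower-order terms of the form $C_T\sup_{[0,T]}\|z\|_{H^{2-\sigma}}^2+C_T\int_0^T\|z\|_{H^{2-\sigma}}^2\,dt$, i.e.\ controlled by a compact seminorm on $\mathcal{B}\times\mathcal{B}$.

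The main obstacle is twofold: the non-conservative cross term $\beta\int_0^T(z_y,z_t)\,dt$ and the nonlinear, possibly degenerate damping. For the first, note that $u\mapsto u_y$ is bounded $H^2_\ast\to H^1$ and the energy controls $H^2_\ast$, so $\|z_y\|_0$ is critical; the key observation is the interpolation
$$\|z_y\|_0\le \eta\|z\|_{2,\ast}+C_\eta\|z\|_{H^{2-\sigma}},\quad \sigma>0,\ \eta>0,$$
which lets us absorb the $\eta$-part into $E_z$ and demote the rest to the same compact lower-order basket. For the damping, the monotonicity of $v\mapsto g(\|v\|_0)v$ gives $(D(u^1_t)-D(u^2_t),z_t)\ge 0$ directly; when $g(0)>0$ one in fact gets $b_0\|z_t\|_0^2$ pointwise, while in the degenerate case $b_0=0$ one applies the convexity/monotonicity machinery of Lasiecka--Tataru \cite{LasTat93} to the concave envelope of $g$, producing a nonlinear integral inequality $\sigma(E_z(T))\le E_z(0)-E_z(T)+\mathrm{lot}$ with $\sigma$ a concave, strictly increasing function vanishing at $0$.

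Putting the three ingredients together, one arrives at a stabilizability estimate
$$E_z(T)\le \alpha(T)\,E_z(0)+C_T\,\Phi_T(u^1,u^2),\qquad \alpha(T)\xrightarrow[T\to\infty]{}0,$$
where $\Phi_T$ is a seminorm that is compact on $\mathcal{B}\times\mathcal{B}$ (built from the $H^{2-\sigma}$-norms of $z$ appearing above). An application of the Chueshov--Lasiecka compactness criterion (Proposition 2.2.10 in \cite{Chu15}, or the equivalent statement in \cite{Memoir}) then yields asymptotic smoothness of $\{\cH,S_t\}$, completing the proof. I expect the hardest step to be the careful interpolation+integration-by-parts accounting that shows the $\beta u_y$ contribution is indeed lower-order when paired against the (possibly degenerate) nonlinear dissipation, since both effects compete and both were precisely the obstructions flagged in the introduction.
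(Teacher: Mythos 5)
Your overall architecture (difference of trajectories, energy plus equipartition identities, demoting subcritical pieces to lower-order terms, then a compactness criterion) matches the paper's in spirit, and for the non-degenerate case $g(0)=b_0>0$ your sketch essentially reproduces the quasi-stability argument the paper gives in Part II. The genuine gap is in the degenerate case $b_0=0$, which assumption \eqref{aspg} explicitly allows (e.g.\ $g(s)=b_qs^q$) and which is the case this theorem must cover. Your endpoint, a stabilizability inequality $E_z(T)\le \alpha(T)E_z(0)+C_T\Phi_T$ with $\alpha(T)\to0$ and $\Phi_T$ a \emph{compact} seminorm, is a quasi-stability--type estimate; the paper obtains such an estimate only under $b_0>0$, because only then does the damping difference control the kinetic energy of the difference, $b_0\|z_t\|_0^2\le (D(u_t)-D(v_t),z_t)$, which is what converts the term $2\int_0^T\|z_t\|_0^2$ produced by the multiplier $z$ into quantities handled by the energy identity. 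When $b_0=0$ this linear conversion fails: one must pass through the concave function $\mathcal{K}$ of \eqref{Kappa} and Jensen's inequality (Lemma \ref{damping1}), which yields $T\,\mathcal{K}\bigl(\tfrac1T\int_0^T(D(u_t)-D(v_t),z_t)\bigr)$; re-expressing the damping integral via the energy identity reintroduces $\int_0^T(F(u)-F(v),z_t)$ inside $\mathcal{K}$, and after linearizing $\mathcal{K}$ at the price of an $\epsilon_0$ one is left with source-difference integrals that are bounded on the absorbing set but neither small nor multiplied by a decaying factor of $E_z(0)$. Your one-line appeal to the Lasiecka--Tataru convexity machinery ("$\sigma(E_z(T))\le E_z(0)-E_z(T)+\mathrm{lot}$") does not bridge this: no factor $\alpha(T)\to0$ in front of $E_z(0)$ is available, and the residual terms are not dominated by a compact seminorm in any way you have justified.

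This is exactly why the paper abandons the stabilizability format in Part I and instead verifies the weaker criterion of Proposition \ref{thm:C17} (Proposition 4.13 of \cite{Memoir}): an estimate of the form \eqref{psinm}, $\mathrm{dist}_\cH(S_T\mbf{y}_1,S_T\mbf{y}_2)\le\epsilon+\Psi_{\epsilon,\cB,T}$, where $\Psi$ retains the raw integrals $\tfrac1T\bigl|\int_0^T(F(u)-F(v),z_t)\bigr|$ and $\tfrac1T\bigl|\int_0^T\!\int_t^T(F(u)-F(v),z_t)\bigr|$, and the required vanishing of $\Psi$ along sequences is proved not by compactness of a seminorm but by a sequential weak-convergence (compensated compactness) argument exploiting the antiderivative structure of each nonlinearity ($\kappa u^+$, the Berger term, $f_0$, $\beta u_y$), so that each contribution becomes a product of weakly and strongly convergent sequences whose iterated liminf is zero. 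A secondary inaccuracy: you identify $\beta z_y$ paired with the degenerate damping as the hardest step, but that term is handled by the same elementary interpolation you propose (compare \eqref{pro:AC-08_2}); the true obstruction in the degenerate case is the kinetic energy of the difference versus the nonlinear damping, and your proposal does not resolve it.
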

    \begin{remark}
    The case of $b_0 =0$ is more demanding, since this corresponds to the degeneracy  of $g(s)$ at the origin. In this case, asymptotic smoothness follows from the ``compensated compactness'' criteria. The fact that one does not have strong control of dissipation at the origin leads to new challenges where the methods of convexity \cite{LasTat93} are employed.\\
 For $b_0 >0$, we are in a position to establish the so-called ``quasistability inequality'' (see the Definition \ref{def:QS}) in Subsection 3.1, which implies not only asymptotic smoothness, but also other properties such as the regularity of the attractor and its finite-dimensionality.
In both cases, it is shown that the ``weak attractor'' becomes a strong one as stated in Theorem \ref{Asym-sm1}.      
    \end{remark}
 
%%%%%%%%%%%%%%%% CASE 1 %%%%%%%%%%%%%%%%%%%%%%%
%\paragraph{Proof of Theorem \ref{Asym-sm1}}
%\noindent{\Large \bf Compensated Compactness.}
 The proof of Theorem \ref{Asym-sm1}  will follow several steps.  It is based   on a suitable application  of a compensated compactness criterion, recalled below for convenience of a reader--see \cite{Memoir}.

	\begin{proposition}[Proposition 4.13 \cite{Memoir} ]
	    \label{thm:C17}
		Assume that for any bounded forward invariant set $\cB\subset\cH$ and for any $\epsilon>0$, there exists $T=T(\epsilon,\cB)$ such that
		\vskip-.1in
  \begin{equation}\label{psinm}
			\mbox{dist}_\cH(S_T\mbf{y}_1,S_T\mbf{y}_2) 
			\leq \epsilon + \Psi_{\epsilon,\cB,T}(\mbf{y}_1,\mbf{y}_2), \quad \mbf{y}_i \in \cB
		\end{equation}
		where $\Psi_{\epsilon,\cB,T}$ is a functional on $\cB\times \cB$ such that
		\vskip-.1in
		\begin{equation}\label{proposition6.6}
			\liminf_{n\to\infty}\liminf_{m\to\infty} \Psi_{\epsilon,\cB,T}(\mbf{y}_n,\mbf{y}_m)=0,
		\end{equation}
		for every sequence $\{\mbf{y}_n\}_n\subset \cB$. Then $S_t$ is an asymptotically smooth evolution operator.
	\end{proposition}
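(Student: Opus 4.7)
The plan is to verify the compensated compactness criterion of Proposition \ref{thm:C17}. Fix a bounded, forward invariant set $\cB\subset\cH$ (available by Theorem \ref{ball}), take $\mbf{y}_i=\{u_0^i,u_1^i\}\in\cB$, $i=1,2$, and denote $\mbf{u}^i(t)=S_t\mbf{y}_i$. Setting $z=u^1-u^2$, the difference satisfies
\begin{equation*}
z_{tt}+\Delta^2 z+[D(u_t^1)-D(u_t^2)]=F(u^1)-F(u^2)
\end{equation*}
weakly, with associated linear energy $E_z(t)=\tfrac12(\|z_t\|_0^2+\|z\|_{2,\ast}^2)$. The goal is to produce an inequality
\[
E_z(T)\le \epsilon+\Psi_{\epsilon,\cB,T}(\mbf{y}_1,\mbf{y}_2)
\]
with $\Psi_{\epsilon,\cB,T}$ lower-order in the sense of \eqref{proposition6.6}.

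I would employ the classical two-multiplier scheme for second-order hyperbolic dynamics. Multiplication by $z_t$ and integration over $[0,T]$ yield the dissipation identity
\begin{equation*}
E_z(T)+\int_0^T(D(u_t^1)-D(u_t^2),z_t)\,dt=E_z(0)+\int_0^T(F(u^1)-F(u^2),z_t)\,dt,
\end{equation*}
while multiplication by $z$ gives the equipartition relation
\begin{equation*}
\int_0^T\!\|z\|_{2,\ast}^2\,dt=\int_0^T\!\|z_t\|_0^2\,dt-[(z_t,z)]_0^T-\!\int_0^T\!(D(u_t^1)\!-\!D(u_t^2),z)\,dt+\!\int_0^T\!(F(u^1)\!-\!F(u^2),z)\,dt.
\end{equation*}
Combining these identities with a weighted sum and the standard averaging at the endpoint $T$ produces a bound
\begin{equation*}
T\cdot E_z(T)\le C\int_0^T(D(u_t^1)-D(u_t^2),z_t)\,dt+C\!\cdot\!\sup_{[0,T]}|(z_t,z)|+\Psi_{\epsilon,\cB,T}(\mbf{y}_1,\mbf{y}_2),
\end{equation*}
where the first integral is non-negative by monotonicity of $D$, and $\Psi_{\epsilon,\cB,T}$ must collect all remaining pairings involving $F(u^1)-F(u^2)$ and the cross term $(D(u_t^1)-D(u_t^2),z)$.

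The crucial analytic step is identifying these remainders as compact. The Berger-type difference $[(\alpha-\delta\|u_x^1\|_0^2)u_{xx}^1-(\alpha-\delta\|u_x^2\|_0^2)u_{xx}^2]$, the source difference $f_0(u^1)-f_0(u^2)$, and the non-smooth $\kappa(u^{1,+}-u^{2,+})$ term are all Lipschitz on the bounded set $\cB$ with respect to norms strictly weaker than $\|\cdot\|_{2,\ast}$, so by compactness of $H^2_\ast\hookrightarrow H^{2-s}(\Omega)$ for $s>0$ they produce integrands that converge strongly along subsequences. The non-conservative contribution $-\beta\int_0^T(z_y,z_t)\,dt$ is the principal obstacle: integration by parts in time transfers the derivative to $z_y$ and leaves the endpoint contributions $\beta(z_y(T),z(T))-\beta(z_y(0),z(0))$ plus an interior term $-\beta\int_0^T(z_{yt},z)\,dt$; a further integration by parts in $y$ then recasts the last term as $\beta\int_0^T(z_t,z_y)\,dt$, halving the original expression and allowing one to bound it by $\|z\|_1$, which is compact relative to $\|\cdot\|_{2,\ast}$. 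The dangerous cross term $\int_0^T(D(u_t^1)-D(u_t^2),z)\,dt$ is estimated by a Cauchy--Schwarz/Young split: when $g(0)=b_0>0$ it is dominated by $\eta\int_0^T\|z\|_0^2\,dt+C_\eta\int_0^T(D(u_t^1)-D(u_t^2),z_t^1-z_t^2)\,dt$, while the degenerate case $b_0=0$ requires the Lasiecka--Tataru convexity argument \cite{LasTat93}, which furnishes a concave majorant of the dissipation that can be made arbitrarily small on sequences with uniformly bounded energy.

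With these reductions, $\Psi_{\epsilon,\cB,T}$ reduces to a finite sum of time-integrals of $\|z\|_{2-s}$, $\|z\|_1$, $\|z\|_0$ and endpoint pairings in such weaker norms. Given any sequence $\{\mbf{y}_n\}\subset\cB$, Banach--Alaoglu and the Aubin--Lions lemma provide a subsequence along which $u^n\to u$ strongly in $C([0,T];H^{2-s}(\Omega))$; consequently every term composing $\Psi_{\epsilon,\cB,T}(\mbf{y}_n,\mbf{y}_m)$ tends to zero as $n,m\to\infty$, and condition \eqref{proposition6.6} is satisfied. The main obstacle is the simultaneous presence of the \emph{non-conservative term} $\beta u_y$ and the possibly \emph{degenerate nonlinear damping}: the former resists the equipartition argument because it is not a gradient, while the latter prevents a clean linear absorption of the cross term. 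Overcoming this by intertwining the time-integration-by-parts reduction for $\beta u_y$ with the convexity method of \cite{LasTat93} for the damping is the central technical content of the argument.
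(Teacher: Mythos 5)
Your proposal does not prove the statement in question. Proposition \ref{thm:C17} is an abstract criterion about an evolution operator $S_t$ on $\cH$: \emph{assuming} the existence, for every bounded forward invariant $\cB$ and every $\epsilon>0$, of a time $T$ and a functional $\Psi_{\epsilon,\cB,T}$ satisfying \eqref{psinm} and the liminf condition \eqref{proposition6.6}, one must \emph{deduce} that $S_t$ is asymptotically smooth in the sense of Definition \ref{def:AS}. Your text instead announces ``the plan is to verify the compensated compactness criterion'' and proceeds to construct, for the bridge model, a candidate $\Psi$ via the $z_t$ and $z$ multipliers and the compactness of $H^2_\ast\hookrightarrow H^{2-s}(\Omega)$. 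That is the \emph{application} of the proposition (in outline, what the paper does afterwards to prove Theorem \ref{Asym-sm1}), not a proof of the proposition itself; in the paper the proposition is quoted from \cite{Memoir} and not re-proved. The missing content is purely functional-analytic: from \eqref{psinm}--\eqref{proposition6.6} one must show that for any bounded forward invariant $\cB$, any sequence $S_{t_n}\mbf{y}_n$ with $\mbf{y}_n\in\cB$, $t_n\to\infty$, has a convergent subsequence (equivalently, that a compact set in $\overline{\cB}$ uniformly attracts $\cB$). The standard argument writes $S_{t_n}\mbf{y}_n=S_T\bigl(S_{t_n-T}\mbf{y}_n\bigr)$ with $S_{t_n-T}\mbf{y}_n\in\cB$ by forward invariance, uses \eqref{psinm} together with the liminf property to extract, for $\epsilon=1/k$, nested subsequences whose mutual distances fall below $2/k$, and concludes by a diagonal procedure that a subsequence is Cauchy; asymptotic smoothness then follows from the equivalence with asymptotic compactness. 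None of this appears in your proposal.

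Even read as a sketch of the verification that follows the proposition (the proof of Theorem \ref{Asym-sm1}), two steps would not survive scrutiny. First, the claim that the Berger-type difference is Lipschitz in a norm strictly weaker than $\|\cdot\|_{2,\ast}$ is false: the contribution $\|u_x\|_0^2 z_{xx}$ paired with $z_t$ is of critical regularity and cannot be absorbed into lower-order terms via $H^2_\ast\hookrightarrow H^{2-s}$; the paper must keep exactly these pairings inside $\Psi$ and verify \eqref{proposition6.6} by rewriting them as time derivatives of compact quantities plus products of weakly and strongly convergent sequences, with iterated limits. Second, your treatment of $\beta\int_0^T(z_y,z_t)$ by integration by parts in $t$ and $y$ does not ``halve'' the term --- carried out with the free boundary conditions it only reproduces a boundary identity; the correct (and simpler) observation is $|(z_y,z_t)|\leq\|z\|_1\|z_t\|_0\leq C_\cB\|z\|_1$, so the term is already lower order, or one argues as the paper does through weak-strong convergence. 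Likewise, the kinetic recovery and the cross term $(D(u_t)-D(v_t),z)$ are handled in the paper uniformly (degenerate and non-degenerate damping alike) through the concave majorant $\mathcal{K}$ and Jensen's inequality, not by the case split you propose.
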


\begin{proof} of Theorem \ref {Asym-sm1}
\paragraph{ Step 1: Preliminary Inequalities:}

\ \ In order to prove the  validity [in our case] of assumptions imposed in Proposition \ref{thm:C17}, we consider a concave, strictly increasing, continuous function $\mathcal{K}: \R+ \to \R+$, $\mathcal{K}(0) =0 $  capturing the behavior of nonlinear damping $g(|s|) s$ at the origin  (see (3.59) in \cite{Memoir}, Proposition 4.3 in \cite{eller})
\begin{equation}
\mathcal{K}\left( g(||u+v||) (u+v) - g(||u||) u , v \right)_{\Omega} \geq ||v||^2 \label{Kappa}.
\end{equation}
The construction of such a function follows from the fact that, for any strictly monotone function $ \hat{g}(s)$ satisfying $\hat{g}'(s) \geq m_1 $ for $|s|\geq 1 $, the following inequality holds:
$$ s^2 \leq m_1^{-1} [ s \hat{g}_a(s) + h (s \hat{g}_a(s)) ] , \quad s \in \R, a \in \R $$ 
where $\hat{g}_a(s) \equiv \hat{g}(s+a) - \hat{g}(a) $, and $h(s)$ is  a monotone, concave, and continuous function with $h(0) =0 $. See page 134 \cite{Memoir}, Proposition 4.3 \cite{eller} and related references \cite{LasTat93}. The above construction can be applied within our context with $\hat{g}(s)= g(|s|)s$.
In fact, in the present specific case, the function $h(s) \sim s^{\frac{2}{j+2}} $ captures the behavior of the damping at the origin with $ j \geq 1 $, such that $b_j$ is the first term in the damping formula which is different from zero. 
We also need several inequalities related to the energy to be used  for the development.  

 Let $\mbf{u}=\{u,u_t\}$ and $\mbf{v}=\{v,v_t\}$ be strong solutions of \eqref{sbe}-\eqref{IC} uniformly bounded in time, i.e. $\|\mbf{u}(t)\|_\cH,\|\mbf{v}(t)\|_\cH\leq R$ for every $t\geq0$ and for some $R>0$. Then $z=u-v$ is the (strong) solution and it  solves the following:
    \begin{equation}\label{1abs-z}
        z_{tt} + Az + D(u_t)-D(v_t) = F(u)-F(v) ,
    \end{equation}
           with initial data $\mbf{z}_0=\mbf{u}_0-\mbf{v}_0$.  $\mbf{z}=\{z,z_t\}$ satisfies the energy identity with $\tilde{E}(\mbf{z(t)}) \equiv ||\mbf{z(t)}||^2_{\cH} $.
    \begin{equation}\label{1lem:EE-01}
        \tilde{E}(\mbf{z}(t)) + \int_s^t(D(u_t)-D(v_t),z_t)_\Omega
        = \tilde{E}(\mbf{z}(s)) + \int_s^t(F(u)-F(v),z_t)_\Omega,\quad s\leq t.
    \end{equation}
    Since $(D(u_t)-D(v_t),z_t)_\Omega = \sum_{j=0}^{q}b_j(\|u_t\|_0^ju_t-\|v_t\|_0^jv_t,z_t)_\Omega \geq 0$ (see Lemma \ref{lem:op_D}).
    \begin{equation}\label{1.lem:EE-01}
        \tilde{E}(\mbf{z}(t))
        \leq \tilde{E}(\mbf{z}(s)) + \int_s^t(F(u)-F(v),z_t)_\Omega,\quad s\leq t.
    \end{equation}
          Let $T>0$  be arbitrary. Applying \eqref{1.lem:EE-01} for $\{s,t\}=\{t,T\}$ and integrating over $[0,T]$, we obtain
    \begin{align}\label{1lem:EE-02}
        T\tilde{E}(\mbf{z}(T)) 
        \leq \int_0^T\tilde{E}(\mbf{z})
        + \int_0^T\!\!\int_t^T(F(u)-F(v),z_t)_\Omega .
    \end{align}
 Now, multiplying \eqref{1abs-z} by $z$ and integrating by parts on $(0,T)\times\Omega$, we obtain the \textit{\bf recovery  of the  energy}
    \begin{align}\label{1lem:EE-04}
        2\!\!\int_0^T\! \!\tilde{E}(\mbf{z})
        \!\leq\! \tilde{E}(\mbf{z}(0))\!+\!\tilde{E}(\mbf{z}(T))\!+\! &2\!\! \int_0^T \!\!\|z_t\|_0^2
        \!+\!\!\int_0^T\!\! | (D(u_t)-D(v_t),z)_\Omega|\!+\! \int_0^T|(F(u)-F(v),z)_\Omega|,
    \end{align}
combining \eqref{1lem:EE-02} and \eqref{1lem:EE-04} yields:
    \begin{align}
        T\tilde{E}(\mbf{z}(T)) + \int_0^T \tilde{E}(\mbf{z})
        \leq \int_0^T 2\|z_t\|_0^2+\tilde{E}(\mbf{z}(0))+\tilde{E}(\mbf{z}(T))
        \!\!+\! \int_0^T\!\! |(D(u_t)-D(v_t),z)_\Omega|  \notag\\
        \!+ \!\int_0^T\!\!|(Fu-Fv,z)_\Omega|
        \!+ \!\int_0^T\!\!\int_t^T(F(u)-F(v),z_t)_\Omega, \label{1lem:EE-03}
    \end{align}
using the energy identity \eqref{1.lem:EE-01} we have
        $$\tilde{E}(\mbf{z}(0)) =  \tilde{E}(\mbf{z}(T))+ \int_0^T(D(u_t)-D(v_t),z_t)_\Omega
        - \int_0^T(F(u)-F(v),z_t)_\Omega,$$
taking $T \geq 2+ \frac{T}{2}$ and combining the above inequality with  \eqref{1lem:EE-03} we have the recovery inequality
     \begin{align}
         \frac{T}{2}\tilde{E}(\mbf{z}(T)) &+ \int_0^T 
              \tilde{E}(\mbf{z})
             \leq (T-2)\tilde{E}(\mbf{z}(T)) + \int_0^T \tilde{E}(\mbf{z})\notag \\
           &  \leq \int_0^T 2\|z_t\|_0^2
             + \int_0^T (D(u_t)\!-\!D(v_t),z_t)_\Omega  + \int_0^T |(D(u_t)-D(v_t),z)_\Omega| \notag\\
           &- \!\!\int_0^T\!\!(F(u)\!-\!F(v),z_t)_\Omega\!+\!\int_0^T\!\!| 
                (F(u)- 
              F(v),z)_\Omega|
              + \int_0^T\!\!\int_t^T\!\!(F(u)-F(v),z_t)_\Omega \notag \\
         &\leq \!\int_0^T 2\|z_t\|_0^2
              \!+\! \int_0^T \!\!(D(u_t)\!-\!D(v_t),z_t)_\Omega \!+\! \int_0^T \! |(D(u_t)-D(v_t),z)_\Omega|+ \psi(\mbf{z, T})\label{13lem:EE-03} \\
        \text{where, } \psi(\mbf{z, T})= \!-\!& 
               \int_0^T\!\!(F(u)\!-\!F(v),z_t)_\Omega+\!\!\int_0^T\!\!| 
                (F(u)\!-\! 
              F(v),z)_\Omega|
              + \int_0^T\!\!\int_t^T(F(u)-F(v),z_t)_\Omega. \label{14lem:EE-03}
    \end{align}
    %%%%%%%%%%%%%%%%%%%%%

%\subsubsection{Verification of conditions in the Proposition \eqref{thm:C17}:}
%\paragraph{Proof of Theorem \ref{Asym-sm1}.}
{\bf Step 2. Supporting Lemmas}. In the process of the proof of Theorem \ref{Asym-sm1}  we will take $T$ as a large number for the inequalities in \eqref{14lem:EE-03} in the interval $[0,T]$, so the dependence on $T$ is critical and needs to be controlled. In order to prove the Theorem, we need  several additional  lemmas.

    \begin{lemma}\label{damping1}
    Let $\mbf{u}=\{u,u_t\}$ and $\mbf{v}=\{v,v_t\}$ be strong solutions of \eqref{sbe}-\eqref{IC} corresponding to the initial data $\mbf{y}_1=(u_0,u_1)$ and $\mbf{y}_2=(v_0,v_1)$ from the bounded set $\cB$ and $\mbf{z}=\mbf{u}-\mbf{v}$. Then the following inequality hold:
    \begin{itemize}
        \item there exists  a concave, strictly increasing, continuous function $\mathcal{K}$ with the property given in \eqref{Kappa} such that 
          \begin{equation}\label{ut2}
              \int_0^T \|z_t\|^2 dt \leq T \mathcal{K}\left(\frac{1}{ T} 
                  \int_0^T(D(u_t)-D(v_t), z_t)dt\right).
                  %\mathcal{K}\left(\frac{1}{ T  
                 % m_{\Omega}}\int_0^T(v_t, Dv_t)dt\right) \right]
          \end{equation}
    \end{itemize}
    \end{lemma}
    \begin{proof} 
        %Let $\delta>0$ then,
        Using the property \eqref{Kappa}, we have 
        \begin{align}
             & \int_\Omega |z_t|^2 
             \leq 
             \mathcal{K}\left( D(u_t) -D(v_t),z_t\right)_{\Omega},\notag \\
             \intertext{\mbox{by integrating  in time and applying Jensen's inequality to obtain,}}
       % &\leq \!C_{ \Omega}\! \!\left[\mathcal{K}\left( 
            %\frac{1}{ m_\Omega}
            % \int_\Omega \!
             %(Du_t- Dv_t,z_t) \!\right)\!\right]
            % \mbox{By Jensen's 
            % inequality for } \mathcal{\!K} \text{ where, } m_\Omega=meas(\Omega)
    &\int_0^T \|z_t\|^2 dt \leq 
             \int_0^T \mathcal{K} \left[( D(u_t) -D(v_t),z_t)_{\Omega}
           % \frac{1}{ m_{\Omega}}
            % \int_\Omega
            %u_t Du_t\right)
            \right ]dt \leq T 
            \mathcal{K}\left( 
            \frac{1}{T }
              \int_0^T  
           (D(u_t) -D(v_t), z_t)_{\Omega}dt \right)\notag.
        \end{align}
    Thus, the proof is completed.  
    \end{proof}

   \begin{lemma}\label{Dutu.}
       For some $2 \geq \eta>0, t \in [0, T]$  and  $\mbf{u}, \mbf{v}$ as described in \eqref{damping1} one obtains:
       \begin{equation*}
           \int_0^T |(D(u_t)-D(v_t),z)_\Omega| \leq \int_0^T ||z_t||^2_0   + C_{\cB} T \sup_{t \in [0, T]} \|z\|_{2-\eta, \Omega}^2.
       \end{equation*}
   \end{lemma}
    \begin{proof}
    Using the same strategy as before in \eqref{lem:EE-06}, one obtains,
    \begin{align}
        \int_0^T|(D(u_t)-D(v_t),z)_\Omega|\leq \!
            &\int_0^T\!\!\|z_t\|_0^2 
               \!+C_{\cB, q} T \sup_{t \in [0, T]} \|z\|_0^2. 
            \notag
    \end{align}
    Thus, the lemma is proved for $\eta=2$.
    \end{proof}
    
    \begin{lemma}\label{fz}
     For some $\gamma>0, t \in [0, T]$ and $\mbf{u}, \mbf{v}$ as described in \eqref{damping1} we have,
     \begin{equation}\label{1fz}
         \int_0^T |(F(u)-F(v),z)| \leq \gamma \int_0^T \tilde{E}(\mbf{z})+C_{\cB, \gamma}\int_0^T\|z\|_0^2. 
     \end{equation}
    \end{lemma}
    \begin{proof}
    Since $F$ is locally Lipschitz from $H^2_\ast$ (by \eqref{lem:op_F}) to $L^2(\Omega)$, we have
    \begin{align*}
        \abs{\int_0^T(F(u)-F(v),z)_\Omega}
        \leq L_{\cB}\int_0^T\!\!\|z\|_{2,\ast}\|z\|_0
        \leq \gamma\int_0^T\!\!\tilde{E}(\mbf{z})
        + \frac{L_{\cB}^2}{4\gamma}\int_0^T\|z\|_0^2,
    \end{align*}
    for every $\gamma>0$ (arbitrarily small), where $L_B>0$ is the Lipschitz constant of $F$ on $B$.   
     \end{proof}
     
     {\bf Step 3: Verification of  the assumptions postulated by  Proposition \ref{thm:C17}.}
    % \begin{proof}
     Observe that for strong solutions $\mbf{u}, \mbf{v} $ and $\mbf{z}=\mbf{u}-\mbf{v}$ the relation \eqref{1lem:EE-01} and existence of absorbing ball ($\cB$) gives us $\tilde{E}(\mbf{z}(t)) \leq C_{\cB}$ for any $t \geq 0$, hence
     \begin{align}\label{12lem:EE-01}
         \int_s^t(D(u_t)-D(v_t),z_t)_\Omega- \int_s^t(F(u)-F(v),z_t)_\Omega
        \leq \abs{ \tilde{E}(\mbf{z}(s))-\tilde{E}(\mbf{z}(t))}\leq 2 C_{\cB} ,\quad s\leq t.
     \end{align}
     Using the Lemmas \ref{damping1}, \ref{Dutu.} and \ref{fz} in \eqref{13lem:EE-03}, we obtain with any $\gamma >0$,
     \begin{align}
         \frac{T}{2}\tilde{E}(\mbf{z}(T)) 
              &\!+\! \int_0^T 
              \tilde{E}(\mbf{z}) \!\leq \!4T C_{\Omega}\left[\mathcal{K}\left(\frac{1}{ T  }\int_0^T(Du_t- Dv_t,z_t)_{\Omega)})dt\right)\right]  \notag \\
              &  +\! \int_0^T \!(D(u_t)-D(v_t),z_t)_\Omega \!+\!C_{\cB, q} T 
                  \sup_{t \in [0, T]} \|z\|_0^2+ \gamma \int_0^T \tilde{E}(\mbf{z})+C_{\cB, \gamma}\int_0^T\|z\|^2 \notag \\
              &  -\int_0^T(F(u)-F(v),z_t)_\Omega
                        + \int_0^T\!\!\int_t^T(F(u)-F(v),z_t)_\Omega\notag \\
   % &\mbox{finally, using} \eqref{12lem:EE-01}\notag  \\
              &   \leq \!4T C_{\Omega}\left[\mathcal{K}\left(\frac{1}{ T  
                  }\int_0^T(D(u_t)- D(v_t),z_t)_{\Omega} dt\right)\right]+2C_{\cB} \notag \\
              \hspace{-1pt}&  +\!\left(C_{\cB, q}+C_{\cB, \gamma}\right) T 
                  \sup_{t \in [0, T]} \|z\|_0^2+ \gamma \int_0^T \tilde{E} 
                 (\mbf{z})+ \abs{\int_0^T\!\!\int_t^T(F(u)-F(v),z_t)_\Omega}\notag \\
                &\intertext{\mbox{here we used \eqref{12lem:EE-01}, finally for  $\gamma<1,$ we have, }}
                    \frac{T}{2}\tilde{E}(\mbf{z}(T)) 
              &   \leq \!4T C_{\Omega}\left[\mathcal{K}\left(\frac{1}{ T  
                  }\int_0^T(D(u_t) -D(v_t),z_t)_{\Omega}dt\right)\right]+2C_{\cB} \notag \\
              &  \hspace{1cm}+\!C_{\cB, q} T 
                  \sup_{t \in [0, T]} \|z\|_0^2+ \abs{\int_0^T\!\!\int_t^T(F(u)-F(v),z_t)_\Omega}, \\
             \intertext{\mbox{however, energy relation for  $z$ variable which eliminates the damping reduces this to, } }
              &\hspace{-1.5cm}\tilde{E}(\mbf{z}(T)) \leq 
                  4 C_{\Omega}\mathcal{K}\left(\frac{1}{ T } 
                   C_B\right)+\mathcal{K}\left( \abs{\frac{1}{T}  \int_0^T ( F(u) - F(v), z_t)_{\Omega}}\right)+\frac{2C_{\cB}}{T}\notag \\
              &  +\!C_{\cB, q}
                  \sup_{t \in [0, T]} \|z\|_0^2+ \frac{1}{T}\abs{\int_0^T\!\!\int_t^T(F(u)-F(v),z_t)_\Omega,}\label{id2}
     \end{align}

     We have used the estimate resulting from energy relation and concavity as well as monotone property of $\mathcal{K}$ as stated in the following inequality,  
     $$\mathcal{K}\left( \frac{1}{T} \int_0^T (D(u_t) - D(v_t),z_t)_{\Omega}\right) \leq \mathcal{K} 
               \left(\frac{1}{T} C_B\right) + \mathcal{K}\left( \abs{\frac{1}{T}  \int_0^T ( F(u) - F(v), z_t)_{\Omega}}\right). $$
Recall that the function $\mathcal{K}$ is monotone increasing and concave, thus we have for any $\epsilon_0>0$ the estimate
 \begin{equation}\label{K_2}
     \mathcal{K}\left( \abs{\frac{1}{T}  \int_0^T ( F(u) - F(v), z_t)_{\Omega}}\right) \leq \epsilon_0+C_{\epsilon_0} \abs{\frac{1}{T}  \int_0^T ( F(u) - F(v), z_t)_{\Omega}}.
 \end{equation}
Using this relation we get from \eqref{id2},
 \begin{align}
     \tilde{E}(\mbf{z}(T)) \leq 
                  \frac{2C_{\cB}}{T}
            &+4      C_{\Omega}\mathcal{K}\left(\frac{1}{ T } 
                   C_B\right)+ \epsilon_0+C_{\epsilon_0} \abs{\frac{1}{T}  \int_0^T ( F(u) - F(v), z_t)_{\Omega}}\notag \\
            &+\!C_{\cB, q}
                  \sup_{t \in [0, T]} \|z\|_0^2+ \frac{1}{T}\abs{\int_0^T\!\!\int_t^T(F(u)-F(v),z_t)_\Omega}.\label{Ene}
 \end{align}
 %%%%PROOF OF THE ABOVE for a concave function f(x)<epsilon+C_\epsilon . x
We call the function $\Psi$ as the following:
 \begin{align}
     &\Psi_{T, \cB}={\Psi^1}_{T,\cB}\left(\mbf{y}_1,\mbf{y}_2\right) + C_{\epsilon_0}\abs{\frac{1}{T}  \int_0^T ( F(u)- F(v), z_t)_{\Omega}},\label{Psi03} \\
     & \mbox{where, } {\Psi^1}_{T,\cB}=\!C_{\cB, q}\sup_{t \in [0, T]} \|z\|_0^2+ \frac{1}{T}\abs{\int_0^T\!\!\int_t^T(Fu-Fv,z_t)_\Omega}.
 \end{align}
 Note that it is sufficient to prove the {\it lim inf} condition for $\Psi^1$ function because the second term of $\Psi$ in \eqref{Psi03} will satisfy the {\it lim inf} condition by using the same argument that we will be using for $\Psi^1$. The $\epsilon$ in the formulation of the Proposition  \ref{thm:C17} 
 will be obtained by taking  $T$ large enough so that $\epsilon \equiv \epsilon_0 + \frac{2C_B}{T} $ 
 can be made arbitrarily small. 
 The reminder of the the proof will be concentrated on prove the liminf condition $\Psi^1_{T,B}.$
 
 \noindent Recall from Section \ref{sec:Pre} that, the nonlinear operator $F$ has the form,
   $$Fu =-\left[\kappa u^+ + (\alpha-\delta\|u_x\|_0^2)u_{xx} + f_0(u) + \beta u_y\right].$$
 We consider a sequence of initial data $\{y_n\} \in \cB$. Since  the energy is uniformly  bounded, the corresponding solution $w_n(t)$ of the initial data $y_n$ satisfy the following on a subsequence:
     \begin{align}
     & w^n \to w \mbox{ weakly star in } L^{\infty}(0,T; H^2_\ast(\Omega)),\label{weakast1}\\
     &(w^n)_t \to w_t \mbox{ weakly star in } L^{\infty}(0,T; L^2(\Omega)).\label{weakast2}
     \end{align}
However tracing back in time, from these solutions $w^n(t)$ we can reconstruct the corresponding subsequence of initial data. On the other hand, the relation \eqref{psinm} holds if there exists a subsequence $\{y_{n_k}\}_k \subset \{{y_n}_n\}$ such that,
      $$\lim_{k \to \infty}\lim_{l \to \infty}{\Psi^1}_{T, \cB}(y_{n_l}, y_{n_k})=0.$$ 
Thus it is sufficient to prove that for $\mbf{y}_n=\{w^n, w^n_t\}$ and  $\mbf{y}_m=\{w^m, w^m_t\}$,
     \begin{align}
          &\hspace{2cm}\liminf_{n \to \infty} \liminf_{m \to \infty}{\Psi^1}_{\epsilon,T, \cB}\left({\mbf{y}_{n},\mbf{y}_{m}}\right)=0, \label{psi1}
    \end{align}
   for every fixed $T$ and for any sequence satisfying the properties \eqref{weakast1}, \eqref{weakast2}, where
    \begin{align}
        {\Psi^1}_{\epsilon,T, 
\cB}\left({\mbf{y}_{n},\mbf{y}_{m}}\right) = C_{\cB, q}\sup_{t \in [0, T]} \|w^n-w^m\|_0^2+\frac{1}{T}\abs{\int_0^T\!\!\int_t^T\kappa((w^n)^+-(w^m)^+,w^n_t-w^m_t)_\Omega}, \notag \\
          \quad +\frac{1}{T}\abs{ 
  \int_0^T\!\!\int_t^T\left((\alpha -\delta \|w^n_x\|^2)w^n_{xx}-(\alpha-\delta\|w_x^m\|^2w_{xx}^m),w^n_t-w^m_t\right)} \notag \\
          \quad +\frac{1}{T}\abs{\int_0^T\!\!\int_t^T(f_0(w^n)-f_0(w^m),w^n_t-w^m_t)_\Omega} +\frac{1}{T}\abs{\int_0^T\!\!\int_t^T\beta(w_y^n-w_y^m,w^n_t-w^m_t)_\Omega}. \label{subseq}
     \end{align}
Note that the first factor in the RHS of \eqref{subseq} is
a ``lower order term'', for which the liminf-condition is automatically verified. This follows from Aubin-Lions compactness result.

\noindent For the second term, we essentially deal with the product of weakly and strongly convergent sequences. The details are below.
\begin{align}
&\int_t^T\int_\Omega  \kappa ((w^n)^+-(w^m)^+)\cdot (w^n_t-w^m_t)\notag \\
&=\kappa \frac{1}{2} \left[\|(w^n)^+\|^2+\|(w^m)^+\|^2\right]_t^T-\kappa \int_\Omega \int_t^T ((w^n)^+ , w^m_t)+((w^m)^+ ,w^n_t)
\end{align}
the compactness of $H^2_\ast$ in $L^2$ gives us,
               \begin{equation}\label{same}
                      \left . \left . \lim_{n \to \infty} \|(w^n)^+\|^2\right|_t^T=\|w^+\|^2\right|_t^T \mbox{ and }\left . \left . \lim_{m \to \infty} \|(w^n)^+\|^2 \right|_t^T=\|w^+\|^2\right|_t^T.
                \end{equation}
Due to the energy bounds, the solution $w_n(t)$ satisfies (on a subsequence):
                 \begin{align}
                       w_{t}^n \to w_t \mbox{ weakly star in } L_\infty(0, T; L^2(\Omega)),\notag
                 \end{align}
                   \begin{equation}\label{diff}
                        \mbox{hence, }\lim_{n \to \infty} \lim_{m \to \infty} \int_\Omega \int_t^T ((w^n)^+ \cdot w^m_t+(w^m)^+ \cdot w^n_t)=\left .\|w^+\|^2\right|_t^T,
                   \end{equation}
combining \eqref{same} and \eqref{diff} we conclude that,
 \begin{equation}\label{kappa}
     \liminf_{n \to \infty} \liminf_{m \to \infty}\int_t^T\int_\Omega  \kappa ((w^n)^+-(w^m)^+)\cdot (w^n_t-w^m_t)=0.
 \end{equation}
 For the third term in the expression of ${\Psi^1}$ note that
 $\mbf{y}_n=(w^n,w^n_t)$ is a strong solution thus for $w_t\in \cH_2^*$ integration by parts yields,

 \begin{align}
      & \int_t^T \int_\Omega(\alpha - 
          \delta \|w^n_x\|^2)w^n_{xx}w^n_t= -\left .\frac{\alpha}{2}\|w^n_x\|_0^2\right|_t^T + \left .\frac{\delta}{4}\|w^n_x\|_0^4\right |_t^T,\notag \\
   \intertext{\mbox{therefore by compactness $\|w^n_x\|^2 \to \|w_x\|^2$ and $\|w^n_x\|^4 \to \|w_x\|^4$, we have,}}
        & \lim_{n \to \infty} \lim_{m \to \infty} \int_t^T \int_\Omega(\alpha - 
          \delta \|w^n_x\|^2)w^n_{xx}w^n_t=\left .-\frac{\alpha}{2}\|w_x\|_0^2\right|_t^T + \left .\frac{\delta}{4}\|w_x\|_0^4\right |_t^T, \notag \\
\intertext{\mbox{by the similar argument we conclude}}
            &   \lim_{n \to \infty} \lim_{m \to \infty}\! \int_t^T \!\!\int_\Omega\!\!\left[(\alpha\! -\! 
              \delta \|w^n_x\|^2)w^n_{xx}w^n_t\!+\!(\alpha \!-\! 
          \delta \|w^m_x\|^2)w^m_{xx}w^m_t\right]\!=\!\left .\!- \!
              {\alpha}\|w_x\|_0^2\right|_t^T \!+\! \left .\frac{\delta}{2}\|w_x\|_0^4\right |_t^T. \label{alphadel2}
  \end{align}

\noindent For the mixed [both $n$ an $m$] terms in the third term, the iterated limits give, 
                \begin{align}
                      &\lim_{n \to \infty} 
                          \lim_{m \to \infty}\int_t^T \int_\Omega(\alpha - 
                         \delta \|w^n_x\|^2)w^n_{xx}w^m_t=-\lim_{n \to \infty} 
                         \lim_{m \to \infty} \int_t^T \int_\Omega(\alpha - 
                           \delta \|w^n_x\|^2)w^n_{x}w_{xt}^m\notag \\
                         &=-\lim_{m \to \infty} \int_t^T(\alpha - 
                            \delta \|w_x\|^2)\int_\Omega  w_{x}w^m_{xt} =  -\left.\frac{\alpha}{2}\|w_x\|^2+\frac{\delta}{4} \|w_x\|^4 \right |_t^T.\notag
                        \end{align}
 By similar argument we achieve,
 \begin{align}\label{alpha6}
     \lim_{n \to \infty} 
         \lim_{m \to \infty} \!\int_t^T\!\!\int_\Omega\!\big[(\alpha\! -\! 
                         \delta \|w^n_x\|^2)w^n_{xx}w^m_t\!+\!(\alpha\! -\! 
          \delta \|w^m_x\|^2)w^m_{x}w_{xt}^n\big]=- \left.\alpha\|w_x\|^2+\frac{\delta}{2} \|w_x\|^4 \right |_t^T.
 \end{align}
 Combining \eqref{alphadel2} with \eqref{alpha6} we conclude  that,
 \begin{align}\label{adf}
     \liminf_{n \to \infty} 
         \liminf_{m \to \infty} \int_t^T\int_\Omega \left((\alpha -\delta \|w^n_x\|^2)w^n_{xx}-(\alpha-\delta\|w_x^m\|^2)w_{xx}^m)\cdot (w^n_t-w^m_t\right)=0.
 \end{align}
 Next we focus on the fourth term of $\Psi^1$, we claim, 
 \begin{equation}\label{flim}
     \liminf_{n \to \infty} \liminf_{m \to \infty}\int_0^T\int_t^T(f_0(w^n)-f_0(w^m),w^n_t-w^m_t)_\Omega=0.
 \end{equation}
The proof will be similar to how we have handled the second term in ${\Psi^1}$ i.e; by grouping the mixed items together, let $\tilde{f}$ is the antiderivative of $f$ then
           \begin{align}\label{lipf}
                      \int_t^T\! \!\int_\Omega\!(f_0(w^n)\!-\!f_0 (w^m))(w_t^n-w_t^m)  = \int_{\Omega} [ \tilde{f}_0 (w^m) + \tilde{f}_0(w^m) ]|_t^T \!-\!\! \int_t^T\!\!\int_\Omega \left[f_0(w^n) w_t^m\!+\!f_0(w^m) w_t^n\right].
           \end{align}
For the first two terms using MVT and Young's inequality, we have,
\begin{align}\label{wsc-f1-2}
			&\abs{\int_\Omega\tilde{f}(w^n) - 
                  \int_\Omega\tilde{f}(w)} 
                  \leq C_{T,\mathcal{B}}|\Omega|^{1/2} \int_\Omega |w^n-w|
                  \leq C_{T,\mathcal{B}}|\Omega|^{1/2} \|w^n-w\|_0,
	\end{align}
	where $C_{T,\cB}>0$.
	It follows from the compact embedding and inequality \eqref{wsc-f1-2} that
	\vskip-.15in
	\begin{align}
		&\lim_{n\to\infty}\left.\int_\Omega\tilde{f_0}(w^n)\right|_0^T
		= \left.\int_\Omega\tilde{f}_0(w)\right|_0^T \quad \mbox{and }\notag \\
		&\lim_{n\to\infty}\liminf_{m\to\infty}\left\{
		\left.\int_\Omega \tilde{f_0}(w^n)\right|_0^T
		+ \left.\int_\Omega \tilde{f_0}(w^m)\right|_0^T
		\right\}
		= 2\left.\int_\Gamma\tilde{f_0}(w)\right|_0^T, \label{wsc-f1-3}
	\end{align}
for the mixed terms in the RHS of \eqref{lipf}, we have $w^m_t\to w_t$ weakly in $L^2(Q_0^T)$, on the other hand, since $w^n\to w$ almost everywhere in $Q_0^T$, it follows from the continuity of $f_0$ that $f_0(w^n)\to f(w)$ almost everywhere in $Q_0^T$. Therefore, we must have
	$f(w^n)\to f(w)$ weakly in $L^2(Q_0^T)$. Thus after taking the iterated limits,
	we have,
	\vskip-.15in
	\begin{equation}\label{wsc-f0-4}
		\liminf_{n\to\infty}\liminf_{m\to\infty}
		\int_{\Omega}\int_0^T\left[f_0(w^n)w^m_t+f_0(w^m)w^n_t\right]
		= 2\left.\int_\Omega\tilde{f_0}(w)\right|_0^T,
	\end{equation}
 thus we get \eqref{flim} by combining \eqref{wsc-f0-4} and \eqref{wsc-f1-3}.\\
For the last term, in ${\Psi^1}$ we deal  again with the product of weak and strong convergence. 
 This can be seen directly  from the calculations below. 
 \begin{align}
    &\int_t^T \int_\Omega\beta(w_y^n-w_y^m) \cdot (w^n_t-w^m_t)=\beta \int_t^T\int_\Omega(w^n_y w_t^n+w_y^mw_t^m)-\beta \int_t^T\int_\Omega (w^n_y w^m_t+w_y^mw_t^n)\notag \\
  &\lim_{n \to 
       \infty}\lim_{m \to 
       \infty}\int_t^T\int_\Omega (w^n_y w_t^n+w_y^m w_t^m)= 2 \int_t^T\int_\Omega w_y w_t, \notag \\
  \intertext{\mbox{for the mixed terms we use the 
         weak convergence of $w_t$ in $L^2$ and strong convergence, }}
  &     
 \mbox{ of $w_y$ in $L^2$ to conclude }\lim_{n\to\infty}\lim_{m\to\infty}
		\int_{\Omega}\int_0^T \! \!(w^n_y w^m_t+w_y^m w_t^n)=2 \int_\Omega \int_0^T w_y w_t. \notag
 \end{align}
The above equalities then give us,
\begin{equation}\label{5}
   \lim_{n\to\infty}\lim_{m\to\infty} \int_t^T \int_\Omega\beta(w_y^n-w_y^m) \cdot (w^n_t-w^m_t)=0.
\end{equation}
Therefore we have shown that all the items \eqref{kappa} \eqref{adf}, \eqref{flim}, \eqref{5} on the right hand side of \eqref{subseq} goes to $0$ individually when taking the limits on the subsequences. This proves \eqref{subseq}.
Now for any given $\epsilon>0$, choosing sufficiently large $T$ in \eqref{id2} and choosing $\epsilon_0< \epsilon$ we have the Proposition \ref{thm:C17}  proved since the function $\mathcal{K}$ is continuous and zero at the origin. Thus we have, for any $\epsilon>0$
\begin{equation}\label{psinnm}
			\mbox{dist}_\cH(S_T\mbf{y}_1,S_T\mbf{y}_2) 
			\leq \epsilon + \Psi_{\epsilon,\cB,T}(\mbf{y}_1,\mbf{y}_2), \quad \mbf{y}_i \in \cB.
		\end{equation}
%\end{proof}
The proof of Theorem \ref{Asym-sm1} is completed by evoking Proposition \ref{thm:C17}. 
\end{proof}
%\end{proof}

\noindent We are now in a position to establish the existence of global attractors in both degenerate and non-degenerate cases-as stated in {\bf the first part of Theorem \ref{thm:Att}}. Indeed, this follows by combining the results of Theorem \ref{Asym-sm1} with Corollary \ref{ball} and the following known result:  

\begin{corollary}{[see \cite{Memoir}, page 18]}
    Let $(\cH, S_t)$ be an ultimately dissipative dynamical system in a complete metric space $\cH$. Then $(\cH, S_t)$ possesses a compact global $\mathfrak{A}$ if and only if $(\cH, S_t)$ is asymptotically smooth.  
\end{corollary}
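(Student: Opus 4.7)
The plan is to establish both implications of this well-known equivalence via the standard $\omega$-limit-set construction from the theory of infinite-dimensional dissipative systems. The forward direction is essentially tautological: if a compact global attractor $\mathfrak{A}$ exists, then by uniform attraction $\mathrm{dist}_\cH(S_t B_0, \mathfrak{A}) \to 0$ for any bounded forward invariant $B_0 \subset \cH$, and the compact set $K := \mathfrak{A} \cap \overline{B_0}$ (intersected with $\overline{B_0}$ only to meet the containment requirement of Definition \ref{def:AS}) witnesses asymptotic smoothness. Ultimate dissipativity is automatic here, because $\mathfrak{A}$ together with any neighborhood of it in $\cH$ is bounded and absorbing. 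Thus almost all of the work lies in the converse.

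For the converse, assume ultimate dissipativity together with asymptotic smoothness. Ultimate dissipativity provides a bounded, closed, forward invariant absorbing set $\cB_0 \subset \cH$, as asserted by Theorem \ref{ball}. The natural candidate for the attractor is the $\omega$-limit set
$$\mathfrak{A} := \omega(\cB_0) = \bigcap_{s \geq 0} \overline{\bigcup_{t \geq s} S_t \cB_0},$$
and I would verify the three defining properties in turn. Non-emptiness and compactness: asymptotic smoothness applied to $\cB_0$ yields a compact $K \subset \overline{\cB_0}$ with $\mathrm{dist}_\cH(S_t \cB_0, K) \to 0$; this forces $\mathfrak{A} \subset K$, so $\mathfrak{A}$ inherits compactness, while the standard diagonal extraction on any sequence $\{S_{t_n} y_n\}$ with $t_n \to \infty$, $y_n \in \cB_0$, shows $\mathfrak{A} \neq \emptyset$. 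Uniform attraction: if $B \subset \cH$ is bounded, then $S_t B \subset \cB_0$ for $t \geq t_0(B)$, hence $\mathrm{dist}_\cH(S_t B, \mathfrak{A}) \leq \mathrm{dist}_\cH(S_{t-t_0}\cB_0, K) \to 0$ as $t \to \infty$.

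The one delicate step, and the place where I would expect the main technical subtlety, is the strict invariance $S_t \mathfrak{A} = \mathfrak{A}$. The inclusion $S_t \mathfrak{A} \subset \mathfrak{A}$ follows from continuity of $S_t$ on $\cH$ combined with the closure in the definition of $\omega$. For the reverse inclusion, given $y \in \mathfrak{A}$, one picks $y_n \in \cB_0$ and $t_n \to \infty$ with $S_{t_n} y_n \to y$; asymptotic smoothness, equivalently asymptotic compactness by Proposition 2.2.4 of \cite{Chu15}, then guarantees that the sequence $S_{t_n - t} y_n$ has a convergent subsequence, whose limit $y^{\ast}$ lies in $\mathfrak{A}$ and satisfies $S_t y^{\ast} = y$ by continuity. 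This invariance argument is the only point at which dissipativity and asymptotic compactness are used simultaneously, and it is the technical heart of the result; the rest is bookkeeping over the definitions. In the concrete application to the bridge model, Theorem \ref{ball} supplies the required absorbing set $\cB_0$ and Theorem \ref{Asym-sm1} supplies asymptotic smoothness, so the hypotheses of this Corollary are met and the existence of the compact global attractor $\mathfrak{A}$ claimed in Part I of Theorem \ref{thm:Att} follows at once.
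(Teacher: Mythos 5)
Your argument is correct, and it is the classical $\omega$-limit-set proof of the cited equivalence; the paper itself does not prove this corollary at all --- it is quoted from \cite{Memoir} (see also Theorem 2.3.5 and Proposition 2.2.4 in \cite{Chu15}) and is used only as the bridge that combines Theorem \ref{ball} with Theorem \ref{Asym-sm1} to yield Part I of Theorem \ref{thm:Att}. So there is no competing argument in the paper to compare against: your writeup supplies exactly the material the authors delegate to the reference, and your identification of the strict invariance $S_t\mathfrak{A}=\mathfrak{A}$ as the one step where asymptotic compactness is genuinely needed agrees with the standard treatment. Two small repairs are worth making. First, in the uniform-attraction step the inequality $\mathrm{dist}_\cH(S_tB,\mathfrak{A})\leq \mathrm{dist}_\cH(S_{t-t_0}\cB_0,K)$ points the wrong way, since $\mathfrak{A}\subset K$ makes the distance to $\mathfrak{A}$ the larger quantity; what you actually need is that $S_s\cB_0$ converges uniformly to $\omega(\cB_0)$ itself, and this follows from the same contradiction-plus-extraction argument you already use for invariance: if $\mathrm{dist}_\cH(S_{s_n}y_n,\omega(\cB_0))\geq\epsilon$ with $s_n\to\infty$ and $y_n\in\cB_0$, asymptotic compactness gives a convergent subsequence whose limit lies in $\omega(\cB_0)$, a contradiction. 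Second, in the forward direction your choice $K=\mathfrak{A}\cap\overline{B_0}$ does satisfy Definition \ref{def:AS}, but only because $B_0$ is forward invariant, so that limit points of sequences $S_{t_n}y_n$ with $y_n\in B_0$ land in $\overline{B_0}$ as well as in $\mathfrak{A}$; that use of forward invariance should be stated explicitly, since for a general bounded set the intersection could even be empty. With these points made precise, the proof is complete and is the one standing behind the citation.
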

\subsection{Proof of Theorem \ref{thm:Att}--Part II}\label{attractor11}
The next step is to prove {the second part of Theorem \ref{thm:Att}} under the stronger assumption $ g(0) =  b_0 >0$. In that case, we will be able to establish a stronger ``squeezing'': type of property referred to as {\it quasistability.} Quasi-stability is a powerful tool  used for characterization of   additional properties of attractors such as smoothness, dimensionality and  exponential attraction \cite{Memoir,LasMaMo}.

%\noindent{\Large \bf Quasistability.}\label{Quasistability} 

When $g(0)$ is positive i.e., $b_0>0$ in \eqref{aspg}, we have additional information regarding the geometry of the attractor $\mathfrak{A}$ which will be discussed in the next section.

\ifdefined\xxxxx
%XXXXXXXXXXXXXXXXX copied in preliminaries
\textcolor{red}{
\begin{definition}\label{def:QS}
    The dynamical system $\{\cH,S_t\}$ is said to be \textbf{quasi-stable} on a set $B\subset\cH$, if there exists $t_\ast>0$, a Banach space $Z$, a globally Lipschitz mapping $K:B\to Z$ and a compact seminorm $n_Z$ on $Z$, such that for every $\mbf{y}_1,\mbf{y}_2\in B$ with $0\leq p<1$, we have
                \begin{equation*}
                           \|S_{t_\ast}\mbf{y}_1-S_{t_\ast}\mbf{y}_2\|_\cH
                          \leq p\|\mbf{y}_1-\mbf{y}_2\|_\cH + \textrm{n}_Z(K\mbf{y}_1-K\mbf{y}_2).
                \end{equation*}
\end{definition}
Quasi-stability is a powerful tool  used for characterization of   additional properties of attractors such as smoothness, dimensionality and  exponential attraction \cite{Memoir,LasMaMo}.}
%XXXXXXXXXXXXXXXXXXXXXXXXXXXX
 \fi
 \subsubsection{Quasi-stability.}
 Quasistability of a related model with a  {\it linear} damping was previously shown in \cite{BGLW22}.
%The proof of Proposition \ref{pro:QS} is based on several lemmas. 
According to Proposition 3.4.17 on \cite{Chu15}, the quasi-stability property can be obtained via {\it asymptotic quasi-stability}, defined in the Section \ref{mainresults}. See also  \cite{yuming,Mato,Mato1} for applications of this property to several different models. .
\begin{proposition}\label{pro:QS}
Under the assumptions of Theorem \ref{thm:Att},  with $g(0) > 0 $, the dynamical system $\{\cH,S_t\}$ is quasi-stable on every bounded forward invariant set of $\cH$.
\end{proposition}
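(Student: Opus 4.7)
The plan is to establish, for every bounded forward invariant set $\cB\subset\cH$, the stabilizability inequality
\[
\|S_{t_*}\mbf y_1 - S_{t_*}\mbf y_2\|_{\cH}^2 \le p\,\|\mbf y_1 - \mbf y_2\|_{\cH}^2 + C_{t_*}\sup_{s\in[0,t_*]}\|z(s)\|_{2-\eta}^2,
\]
for some $t_*>0$, $0\le p<1$, $\eta\in(0,2]$, where $z=u-v$ and $\mbf y_i\in\cB$; quasi-stability then follows by the Chueshov--Lasiecka framework (Proposition 3.4.17 in \cite{Chu15}), since $H^{2-\eta}\hookrightarrow L^2$ compactly and the map $\mbf y\mapsto z$ is globally Lipschitz on $\cB$. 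The starting point is the recovery inequality \eqref{13lem:EE-03} already derived for the asymptotic smoothness proof; the new ingredient is the non-degeneracy $g(0)=b_0>0$, which together with monotonicity of the remaining part of $D$ yields the pointwise lower bound $(D(u_t)-D(v_t),z_t)_\Omega \ge b_0\|z_t\|_0^2$. This direct linear control of $\|z_t\|_0^2$ by the damping replaces the concave function $\mathcal{K}$ used in the degenerate case and eliminates the need for the weak-convergence ``liminf'' arguments.

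From the energy identity \eqref{1.lem:EE-01} and the preceding bound, $\int_0^T\!\|z_t\|_0^2 \le b_0^{-1}[\tilde E(\mbf z(0))-\tilde E(\mbf z(T))+\int_0^T(F(u)-F(v),z_t)_\Omega]$. Substituting this into \eqref{13lem:EE-03} shows that the whole stabilizability question reduces to estimating, by a small multiple of $\int_0^T \tilde E(\mbf z)$ plus a compact seminorm of $z$, the three quantities
\[
\int_0^T|(D(u_t)-D(v_t),z)_\Omega|,\qquad \int_0^T|(F(u)-F(v),z)_\Omega|,\qquad \int_0^T\!\!\int_t^T(F(u)-F(v),z_t)_\Omega.
\]
The first two are handled by Lemmas \ref{Dutu.} and \ref{fz}, producing $C_{\cB}T\sup_{[0,T]}\|z\|_{2-\eta}^2$ and $\gamma\int_0^T \tilde E(\mbf z)$ for arbitrarily small $\gamma>0$, respectively. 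For the double integral, integration by parts in time is applied piece by piece: for $\kappa(u^+-v^+)$ one uses convexity of $s\mapsto\tfrac{1}{2}(s^+)^2$ and compactness of $H^2_*\hookrightarrow L^2$; for $f_0(u)-f_0(v)$ the antiderivative structure yields boundary terms at $t=0,T$ plus a compact lower-order remainder (cf.\ \eqref{wsc-f1-2}--\eqref{wsc-f0-4}, but now applied pointwise in $n,m$ rather than in the liminf sense); for the Berger nonlinearity $(\alpha-\delta\|u_x\|_0^2)u_{xx}-(\alpha-\delta\|v_x\|_0^2)v_{xx}$ an explicit time-derivative structure arises when paired with $z_t$, reducing the bulk contribution to evaluations at $t=0,T$ of $\|u_x\|_0^2,\|v_x\|_0^2,\|u_x\|_0^4,\|v_x\|_0^4$, which are compact in $H^1$.

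The main obstacle is the non-conservative term $-\beta z_y$ paired with $z_t$: unlike the conservative pieces it admits no gradient structure. The plan is to apply Young's inequality in the form $|\beta(z_y,z_t)_\Omega|\le \epsilon\|z_t\|_0^2 + C_{\epsilon,\beta}\|z\|_1^2$, absorb the $\|z_t\|_0^2$ piece into the damping dissipation (which is possible precisely because $b_0>0$), and interpret $\sup_{[0,T]}\|z\|_1^2$ as a compact seminorm via $H^2_*\Subset H^1$. After collecting all contributions and absorbing $\gamma\int_0^T \tilde E(\mbf z)$ on the left, one arrives at
\[
\tfrac{T}{2}\,\tilde E(\mbf z(T)) \;\le\; C_1\,\tilde E(\mbf z(0)) + C_T\sup_{s\in[0,T]}\|z(s)\|_{2-\eta}^2,
\]
with $C_1$ independent of $T$. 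Dividing by $T/2$ and choosing $t_*=T$ so large that $p:=2C_1/T<1$ yields the desired stabilizability inequality on $\cB$. Passing from strong to generalized solutions by density and Hadamard well-posedness (Theorem \ref{thm:EU}) completes the proof.
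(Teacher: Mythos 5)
Your overall architecture is the same as the paper's: reuse the recovery estimate \eqref{13lem:EE-03}, replace the concave function $\mathcal{K}$ by the linear bound $b_0\|z_t\|_0^2\le (D(u_t)-D(v_t),z_t)_\Omega$ available when $g(0)=b_0>0$, reduce everything to the three source quantities, absorb for $T$ large so that $p=2C_1/T<1$, and conclude via the quasi-stability framework of \cite{Chu15}. The gap is in your treatment of the double integral $\int_0^T\!\!\int_t^T(F(u)-F(v),z_t)_\Omega$. You propose to transplant the antiderivative/compactness computations of the asymptotic-smoothness proof ``pointwise in $n,m$ rather than in the liminf sense''. That computation only makes sense in the liminf sense: for a \emph{fixed pair} of solutions, integrating $\kappa(u^+-v^+,z_t)_\Omega$, $(f_0(u)-f_0(v),z_t)_\Omega$ or the Berger difference by parts in time produces boundary and mixed terms built from the individual trajectories ($\|u_x(t)\|_0^4$ and $\|v_x(t)\|_0^4$ at $t=0,T$, $\int_\Omega\tilde f_0(u)$, cross terms such as $(u^+,v_t)_\Omega$), not from the difference $z$. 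These are neither small nor dominated by any compact seminorm of $z$, so they cannot be placed in the lower-order term of the stabilizability inequality; in particular your claim that the Berger contribution reduces to evaluations of $\|u_x\|_0^2,\|u_x\|_0^4$ ``which are compact in $H^1$'' confuses compactness of an embedding with the actual requirement that the remainder be a seminorm of $K\mbf y_1-K\mbf y_2$, i.e.\ of the difference of the two trajectories.

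The repair is exactly what the paper does in Step 2 of its proof: exploit the difference structure quantitatively. For the $\kappa$ and $f_0$ pieces use $|u^+-v^+|\le|z|$ and the local Lipschitz bound coming from $f_0\in C^1$ together with $H^2_\ast\subset L^\infty(\Omega)$, as in \eqref{pro:AC-08_1} and \eqref{pro:QS-03a}, giving $\gamma\tilde{E}(\mbf z)+C_\gamma\|z\|_0^2$; for the Berger term use the identity \eqref{pro:AC-08_3}, whose exact time-derivative part is $-\frac{d}{dt}\{\frac12[\alpha+\delta\|u_x\|_0^2]\|z_x\|_0^2\}$, so the boundary terms carry $\|z_x\|_0^2$ (a compact lower-order norm of $z$) and the remainders are bounded by $C_B\|z_x\|_0(\|z_t\|_0+\|z_x\|_0)$, leading to \eqref{pro:QS-02b}. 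With these replacements your scheme closes as in \eqref{pro:QS-04}. Two smaller points: the absorption of $\|z_t\|_0^2$ arising from the $\beta(z_y,z_t)_\Omega$ term does not require $b_0>0$ (use $\|z_t\|_0^2\le2\tilde{E}(\mbf z)$ and absorb into $\gamma\int_0^T\tilde{E}(\mbf z)$, cf.\ \eqref{pro:AC-08_2}); and to invoke the quasi-stability framework you still need the Gronwall estimate $\|S_t\mbf y_1-S_t\mbf y_2\|_\cH^2\le e^{L_Bt}\|\mbf y_1-\mbf y_2\|_\cH^2$, which is what makes the map from initial data to the trajectory globally Lipschitz on $\cB$; the paper records it as \eqref{def:AQS-01}, and it also justifies the passage from strong to generalized solutions by density that you invoke at the end.
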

\noindent The proof of Proposition \ref{pro:QS} is based on several lemmas.
\iffalse
Quasistability of a related model with a  {\it linear} damping was previously shown in \cite{BGLW22}.
The proof of Proposition \ref{pro:QS} is based on several lemmas. 
According to Proposition 3.4.17 on \cite{Chu15}, the quasi-stability property can be obtained via {\it asymptotic quasi-stability}, defined in the Section 2. 
\fi
\begin{proof}
\ifdefined\xxxxxx
\textcolor{red}{
\begin{definition}
    $\{\cH,S_t\}$ is said to be \textbf{asymptotically quasi-stable} on a set $B\subset\cH$ if there exists a compact seminorm $\mu_{H^2_\ast}$ on $H^2_\ast$ and non-negative functions $h(t)$, $g_0(t)$ and $d(t)$ on $\R_+$ such that: [i.] $h(t)$ and $d(t)$ are locally bounded on $[0,\infty)$, [ii.] $g_0(t)\in L^1(\R_+)$ possess the property $\lim_{t\to\infty}g_0(t)=0$, and [iii.] for every $\mbf{y}_1,\mbf{y}_2\in B$ and $t>0$ the following holds
                 \begin{equation*}
                                \|S_t\mbf{y}_1-S_t\mbf{y}_2\|_\cH^2 \leq h(t)\|\mbf{y}_1-\mbf{y}_2\|_\cH^2 \quad \mbox{and}
                 \end{equation*}
                \begin{align*}
                              \|S_t\mbf{y}_1-S_t\mbf{y}_2\|_\cH^2
                                \leq g_0(t)\|\mbf{y}_1-\mbf{y}_2\|_\cH^2
                               + d(t)\sup_{0\leq s\leq t}\left[\mu_{H^2_\ast}(y^1(s)-y^2(s))\right]^2,
                   \end{align*}
    where $S_t\mbf{y}_i=\{y^i(t),y^i_t(t)\}$, $i=1,2$, are the corresponding solutions of \eqref{sbe}-\eqref{IC}.
\end{definition}}
\fi

{\bf Step 1:} [Energy recovery Estimate.] 
We look at the difference of trajectories of two solutions. The following estimate holds. 
\begin{lemma}\label{lem:EE}
    Let $T>0$ arbitrarily fixed. Let also $\mbf{u}=\{u,u_t\}$ and $\mbf{v}=\{v,v_t\}$ be strong solutions of \eqref{sbe}-\eqref{IC} uniformly bounded in time, i.e. $\|\mbf{u}(t)\|_\cH,\|\mbf{v}(t)\|_\cH\leq R$ for every $t\geq0$ and for some $R>0$. Therefore, if $\mbf{z}=\mbf{u}-\mbf{v}$ then there are positive constants $C$ [independent on T]  and $C_{R,T}$ such that
    \begin{align}
        T\tilde{E}(\mbf{z}(T)) 
        + \int_0^T\!\!\tilde{E}(\mbf{z})
        \leq 
        C\left[\tilde{E}(\mbf{z}(0)) 
        + \int_0^T\!\!(F(u)-F(v),z)_\Omega
        + \int_0^T\!\!(F(u)-F(v),z_t)_\Omega \right. \notag\\ \left.
        + \int_0^T\!\!\int_t^T\!\!(F(u)-F(v),z_t)_\Omega \right]
        + C_{R,T}\lot{[0,T]}{z}, \label{lem:EE-ineq}
    \end{align}

    \begin{equation*}\label{lem:EE-lot}
       \mbox{where, }\quad \lot{[0,T]}{z} = \sup_{t\in[0,T]}\left\{\|z(t)\|_{2-\eta}^2:\eta\in(0,2]\right\}.
    \end{equation*}
\end{lemma}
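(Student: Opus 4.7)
The plan is to follow the same two-identity strategy used in the proof of Theorem \ref{Asym-sm1} (equations \eqref{1lem:EE-01}--\eqref{13lem:EE-03}), but exploit the non-degeneracy hypothesis $g(0)=b_0>0$ (in force throughout this subsection) to eliminate the nonlinear $\mathcal{K}$-concavity detour. The difference $\mathbf{z}=\mathbf{u}-\mathbf{v}$ solves \eqref{1abs-z}, and I will combine three identities: (i) the energy equality obtained by testing with $z_t$, which on $[s,t]$ reads
\[
\tilde{E}(\mathbf{z}(t)) + \int_s^t (D(u_t)-D(v_t),z_t)_\Omega = \tilde{E}(\mathbf{z}(s)) + \int_s^t (F(u)-F(v),z_t)_\Omega;
\]
(ii) the equipartition identity obtained by testing with $z$ and integrating by parts in time, which gives
\[
\int_0^T a(z,z) = \int_0^T \|z_t\|_0^2 - \bigl[(z_t,z)_\Omega\bigr]_0^T - \int_0^T (D(u_t)-D(v_t),z)_\Omega + \int_0^T (F(u)-F(v),z)_\Omega;
\]
(iii) integrating identity (i) in $s\in[0,T]$ (with $t$ replaced by $T$) to produce the $T\tilde{E}(\mathbf{z}(T))$ term together with the iterated integral $\int_0^T\!\!\int_t^T (F(u)-F(v),z_t)_\Omega$.

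The crucial new ingredient is that, by monotonicity of $D$ together with $g(0)=b_0>0$,
\[
(D(u_t)-D(v_t),z_t)_\Omega \geq b_0 \|z_t\|_0^2,
\]
so that the energy identity upgrades to the absorption bound
\[
\int_0^T \|z_t\|_0^2 \leq \frac{1}{b_0}\Bigl[\tilde{E}(\mathbf{z}(0)) + \int_0^T (F(u)-F(v),z_t)_\Omega\Bigr].
\]
Adding $\int_0^T\|z_t\|_0^2$ to identity (ii) yields $2\int_0^T \tilde{E}(\mathbf{z})$ on the left, and summing with the bound from (iii) produces exactly the left-hand side $T\tilde{E}(\mathbf{z}(T))+\int_0^T\tilde{E}(\mathbf{z})$ of \eqref{lem:EE-ineq}.

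It remains to handle the cross term $\int_0^T|(D(u_t)-D(v_t),z)_\Omega|$, which is the only delicate point because $D$ is \emph{nonlocal} through $g(\|\cdot\|_0)$. Writing
\[
D(u_t)-D(v_t) = g(\|u_t\|_0)z_t + \bigl[g(\|u_t\|_0)-g(\|v_t\|_0)\bigr]v_t,
\]
and using $\|u_t\|_0,\|v_t\|_0\leq R$ together with the $C^1$ character of $g$ on bounded intervals of $\R_+$, one obtains the local Lipschitz estimate $\|D(u_t)-D(v_t)\|_0 \leq C_R\|z_t\|_0$. Then Cauchy--Schwarz and Young's inequality give
\[
\int_0^T |(D(u_t)-D(v_t),z)_\Omega| \leq \varepsilon\int_0^T \|z_t\|_0^2 + C_{R,\varepsilon}\int_0^T \|z\|_0^2,
\]
and the first term is absorbed by the $b_0$-bound above (choose $\varepsilon$ small). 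The boundary contribution $|[(z_t,z)_\Omega]_0^T|$ is bounded by $\tilde{E}(\mathbf{z}(0))+\tilde{E}(\mathbf{z}(T))$ (using Young and Poincar\'e), and the latter is dominated by $\tilde{E}(\mathbf{z}(0))+\int_0^T(F(u)-F(v),z_t)_\Omega$ via identity (i), hence already accounted for by the right-hand side of \eqref{lem:EE-ineq}. The remaining $\int_0^T\|z\|_0^2$ is absorbed into the lower order term $\lot{[0,T]}{z}$ with constant $C_{R,T}$.

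The only genuine obstacle is to make sure all $T$-dependence stays isolated in the lower order term: the constant $C$ in front of the four RHS summands must be independent of $T$. This is guaranteed because the coefficient $1/b_0$ from the damping absorption and the absorption of $[(z_t,z)_\Omega]_0^T$ do not depend on $T$, while the Lipschitz bound on $D$ produces only a $C_R$-dependent constant multiplying $\int_0^T\|z\|_0^2$, which is exactly the term we move into $C_{R,T}\lot{[0,T]}{z}$ after noting $\int_0^T\|z\|_0^2\leq T\sup_{[0,T]}\|z\|_0^2$. Collecting the estimates completes the proof.
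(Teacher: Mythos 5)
Your proposal is correct and follows essentially the same route as the paper's proof: the two multipliers $z_t$ and $z$, the integration of the energy identity in $s$ to generate $T\tilde{E}(\mbf{z}(T))$ and the iterated source integral, the lower bound $(D(u_t)-D(v_t),z_t)_\Omega\geq b_0\|z_t\|_0^2$ from $g(0)=b_0>0$, the re-use of the energy identity to control the boundary term $[(z_t,z)_\Omega]_0^T$, and the relegation of $\int_0^T\|z\|_0^2$ to $C_{R,T}\,\lot{[0,T]}{z}$. The only cosmetic deviation is your treatment of the cross term via the local Lipschitz bound $\|D(u_t)-D(v_t)\|_0\leq C_R\|z_t\|_0$ and Young's inequality, whereas the paper expands the polynomial damping difference termwise; both yield the same pointwise bound $C\|z_t\|_0^2+C_{R}\|z\|_0^2$ with $T$-independent constants.
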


\begin{proof}
    Let $z=u-v$ be the (strong) solution of the following  problem
    \begin{align}\label{abs-z}
        z_{tt} + Az + D(u_t)-D(v_t )= F(u)-F(v),
    \end{align}
    with the  initial data $\mbf{z}_0=\mbf{u}_0-\mbf{v}_0$.
    In addition, $\mbf{z}=\{z,z_t\}$ satisfies the energy identity
    \begin{equation}\label{lem:EE-01}
        \tilde{E}(\mbf{z}(t)) + \int_s^t(D(u_t)-D(v_t),z_t)_\Omega
        = \tilde{E}(\mbf{z}(s)) + \int_s^t(F(u)-F(v),z_t)_\Omega,\quad s\leq t.
    \end{equation}
    Let $T>0$ arbitrary. Applying \eqref{lem:EE-01} for $\{s,t\}=\{t,T\}$ and integrating over $[0,T]$, we obtain
    \begin{align}\label{lem:EE-02}
        T\tilde{E}(\mbf{z}(T)) + \int_0^T\!\!\int_t^T(D(u_t)-D(v_t),z_t)_\Omega
        = \int_0^T\tilde{E}(\mbf{z})
        + \int_0^T\!\!\int_t^T(F(u)-F(v),z_t)_\Omega .
    \end{align}
    Multiplying \eqref{abs-z} by $z$ and integrating by parts over $(0,T)\times\Omega$, we obtain
    \begin{align}\label{lem:EE-04}
        2\int_0^T \tilde{E}(\mbf{z})
        = - \left.(z_t,z)_\Omega\right|_0^T
        + \int_0^T\left[ 2\|z_t\|_0^2
        - (D(u_t)-D(v_t),z)_\Omega \right]
        + \int_0^T(F(u)-F(v),z)_\Omega.
    \end{align}
    Combining \eqref{lem:EE-02} and \eqref{lem:EE-04} we have
    \begin{align}
        T\tilde{E}(\mbf{z}(T)) + \int_0^T \tilde{E}(\mbf{z})
        \leq
        - \left.(z_t,z)_\Omega\right|_0^T
        + \int_0^T\left[ 2\|z_t\|_0^2
        + |(D(u_t)-D(v_t),z)_\Omega| \right] \notag\\
        + \int_0^T(F(u)-F(v),z)_\Omega
        + \int_0^T\!\!\int_t^T(F(u)-F(v),z_t)_\Omega. \label{lem:EE-03}
    \end{align}
    
    \noindent\textit{Pointwise energy terms:}
    The pointwise energy terms in the RHS of \eqref{lem:EE-03} are handle as follows
    \begin{equation}\label{lem:EE-PWT}
        |\left.-(z,z_t)_\Omega\right|_0^T|
        \leq 2\tilde{E}(\mbf{z}(0)) + \int_0^T\!\!(Fu-Fv,z_t)_\Omega,
    \end{equation}
    where we have used the ``energy identity'' \eqref{lem:EE-01} for $\{s,t\}=\{0,T\}$.
    
    \noindent\textit{Damping terms:}
    The damping terms in the RHS of \eqref{lem:EE-03} are handled next. Since $b_0 >0 $, the assumption (\ref{aspg} )  
    \begin{align}
        \quad  b_0\|z_t\|_0^2
        \leq(D(u_t)-D(v_t),z_t)_\Omega \quad \text{and} \label{lem:EE-05}
    \end{align}
    \hspace{1pt}
    \begin{align}
        &|(D(u_t)-D(v_t),z)_\Omega|
        =
        |(g(\|u_t\|_0)u_t - g(\|v_t\|_0)v_t,z)_\Omega| \notag\\
        &\leq
        b_0|(z_t, z)|
        +\sum_{j=1}^{q}b_j|(\|u_t\|_0^ju_t-\|v_t\|_0^jv_t,z)_\Omega| \notag\\
        &\leq
        \|z_t\|_0^2 + \frac{b_0^2}{4}\|z\|_0^2
        +\sum_{j=1}^{q}b_j
        \left[
            \|u_t\|_0^j|(z_t,z)_\Omega|
            +|(\|u_t\|_0^j-\|v_t\|_0^j)||(v_t,z)_\Omega|
        \right] \notag\\
        &\leq
        \|z_t\|_0^2 + \frac{b_0^2}{4}\|z\|_0^2
        +\sum_{j=1}^{q}b_j
        \left[
            \|u_t\|_0^j\|z_t\|_0\|z\|_0
            +\|z_t\|_0\left(\sum_{l=0}^{j-1}\|u_t\|_0^{j-1-l}\|v_t\|_0^{l}\right)\|v_t\|_0\|z\|_0
        \right] \notag\\
        &\leq 
        2\|z_t\|_0^2 + \frac{b_0^2}{4}\|z\|_0^2
        + \frac{1}{2}\left(\sum_{j=1}^{q}b_j\|u_t\|_0^j\right)^2\|z\|_0^2
        + \frac{1}{2}\left[\sum_{j=1}^{q}b_j\sum_{l=0}^{j-1}\|u_t\|_0^{j-1-l}\|v_t\|_0^{l+1}\right]^2\|z\|_0^2 \notag\\
        &\leq 
        2\|z_t\|_0^2 
        +\frac{1}{2}\left\{\frac{b_0^2}{2}+\left(\sum_{j=1}^{q}b_j\|u_t\|_0^j\right)^2
        + \left[\sum_{j=1}^{q}b_j\sum_{l=0}^{j-1}\|u_t\|_0^{j-1-l}\|v_t\|_0^{l+1}\right]^2
        \right\}\|z\|_0^2. \label{lem:EE-06}
    \end{align}
    Since $B$ is bounded forward invariant, we have
    \begin{equation}\label{lem:EE-07}
        \left\{\left(\sum_{j=1}^{q}b_j\|u_t\|_0^j\right)^2
        + \left[\sum_{j=1}^{q}b_j\sum_{l=0}^{j-1}\|u_t\|_0^{j-1-l}\|v_t\|_0^{l+1}\right]^2
        \right\}
        \leq C_{B,q},\quad\mbox{for}~t\geq0,
    \end{equation}
    for some positive constant $C_{B,q}$.
    Using \eqref{lem:EE-05} and \eqref{lem:EE-07}, it follows from \eqref{lem:EE-06} that
    \begin{align}\label{lem:EE-DT}
        \int_0^T\left[2\|z_t\|_0^2+|(D(u_t)-D(v_t),z)_\Omega|\right]
        &\leq 4\int_0^T\!\!(D(u_t)-D(v_t),z_t)_\Omega
        + C_{B,q}\int_{Q^T}|z|^2.
    \end{align}
    Combining \eqref{lem:EE-03} with \eqref{lem:EE-PWT} and \eqref{lem:EE-DT}, and using the ``energy identity'' \eqref{lem:EE-01}  in order to estimate $\int_0^T ( D(u_t) - D( v_t), z_t) dt $ we obtain
    \begin{align}
        T\tilde{E}(\mbf{z}(T)) + \int_0^T \tilde{E}(\mbf{z})
        %&\leq
        %E(\mbf{z}(T))+E(\mbf{z}(0)) + \frac{4}{b_0}\int_0^T\!\!(Du_t-Dv_t,z_t)_\Omega \notag\\
        %&\quad
        %+ \int_0^T(Fu-Fv,z)_\Omega
        %+ \int_0^T\!\!\int_t^T(Fu-Fv,z_t)_\Omega
        %+ C_{B,\varepsilon,q}\int_{Q^T}|z|^2 \notag\\
        \leq
        6\left[E(\mbf{z}(0))
        + \int_0^T(F(u)-F(v),z)_\Omega
        + \int_0^T(F(u)-F(v),z_t)_\Omega \right. \notag\\ \left.
        + \int_0^T\!\!\int_t^T(F(u)-F(v),z_t)_\Omega \right]
        + C_{B,\varepsilon,q}\int_{Q^T}|z|^2. \label{lem:EE-08}
    \end{align}
    Inequality \eqref{lem:EE-ineq} thus follows from \eqref{lem:EE-08} by  adjusting the constants $C$ and $C_{R,T}$.
\end{proof}

\begin{remark}
    It is important to notice that the positive constant $C$ in Lemma \ref{lem:EE} above does not depend on time.
\end{remark}

The above Lemma provides an estimate for the energy of the difference of trajectories that are uniformly bounded in time. The above estimate  can  be  extended by density to all  generalized solution. This is done by  using density of $D(A)\times H^2_\ast$ in $\cH$.
We are now in position to prove Proposition \ref{pro:QS}.\\

{\bf Step 2: }[Completion of the Proof of Proposition \ref{pro:QS}.]
%\begin{proof}
    Let $B\subset\cH$ be a bounded forward invariant set. As observed before, it is sufficient to show that $\{\cH,S_t\}$ is asymptotically quasi-stable on $B$. If $\mbf{u}_0,\mbf{v}_0\in B$ and $\mbf{u}=\{u,u_t\}$ and $\mbf{v}=\{v,v_t\}$ are the corresponding (generalized) solutions of \eqref{sbe}-\eqref{IC}, we set $\mbf{z}=\mbf{u}-\mbf{v}$, solution to the problem \eqref{abs-z}, that verifies the following ``energy identity''
    \begin{align} \label{pro:QS-01}
        \tilde{E}(\mbf{z}(t)) + \int_0^t(D(u_t)-D(v_t),z_t)_\Omega
        = \tilde{E}(\mbf{z}(0)) + \int_0^t(F(u)-F(v),z_t)_\Omega,\quad t>0.
    \end{align}
    Since the damping operator $D$ is monotone and the operator $F$ is locally Lipschitz from $H^2_\ast$ to $L^2(\Omega)$ (see Lemmas \ref{lem:op_D} and \ref{lem:op_F}, respectively), inequality \eqref{pro:QS-01} implies
    \begin{align*}
        \tilde{E}(\mbf{z}(t))
        \leq \tilde{E}(\mbf{z}(0))
        + L_B\int_0^t \tilde{E}(\mbf{z}), \quad\mbox{for}~t>0,
    \end{align*}
    where $L_B>0$ is the Lipschitz constant of $F$ on $B$.
    Applying Gronwall's inequality and setting $h(t)=e^{tL_R}$, we conclude
    \begin{equation}\label{def:AQS-01}
        \|S_t\mbf{u}_0-S_t\mbf{v}_0\|_\cH^2
        \leq h(t)\|\mbf{u}_0-\mbf{v}_0\|_\cH^2,\quad\mbox{for}~t>0.
    \end{equation}
    Note that $h(t)$ is locally bounded in $[0,\infty)$. Next, by applying Lemma \ref{lem:EE}, we have
    \begin{align}
        T\tilde{E}(\mbf{z}(T))
        + \int_0^T\!\!\tilde{E}(\mbf{z})
        \leq C\left[\tilde{E}(\mbf{z}(0))
        + \int_0^T\!\!(F(u)-F(v),z)_\Omega
        + \int_0^T\!\!(F(u)-F(v),z_t)_\Omega \right. \notag\\ \left.
        + \int_0^T\!\!\int_t^T(F(u)-F(v),z_t)_\Omega \right]
        + C_{B,T}\lot{[0,T]}{z} \label{pro:QS-02}
        ,
    \end{align}
    for every $T\!>\!0$ and for some constant $C_{B,T}\!>\!0$, where lower order terms are given in \eqref{lem:EE-lot}.
    
    \noindent\textit{Source Terms.}
   For the source $F(u) - F(v)  $   we have 
   %Since $F$ is locally Lipschitz from $H^2_\ast$ to $L^2(\Omega)$, we have
    \begin{align} \label{pro:QS-02a}
        \abs{\int_0^T(F(u)-F(v),z)_\Omega}
        \leq L_B\int_0^T\!\!\|z\|_{2,\ast}\|z\|_0
        \leq \gamma\int_0^T\!\!\tilde{E}(\mbf{z})
        + \frac{L_B^2T}{4\gamma}\lot{[0,T]}{z},
    \end{align}
    for every $\gamma>0$ (arbitrarily small) , where $L_B>0$ is the Lipschitz constant of $F$ on $B$.
    And,
    \begin{align}
        (Fu-Fv,z_t)_\Omega
       \! =\! -\!\kappa(u^+\!-\!v^+,z_t)_\Omega
        \!+\! (\Lambda{u} \!-\! \Lambda{v},z_t)_\Omega
        - (f(u)-f(v),z_t)_\Omega
        - \beta(z_y,z_t)_\Omega, \label{pro:AC-08}
    \end{align}
    where $\Lambda u=(\alpha-\delta\|u_x\|_0^2)u_{xx}$ and similar expression holds for $\Lambda{v}$. Note
    \begin{align}
        \abs{-\kappa(u^+-v^+,z_t)_\Omega}
        \leq \gamma \tilde{E}(\mbf{z}) + \frac{\kappa^2}{2\gamma}\|z\|_0^2, \label{pro:AC-08_1}
    \end{align}
    for any $\gamma>0$. Also,
    \begin{align}
        \abs{\beta(z_y,z_t)_\Omega}
        \leq \gamma \tilde{E}(\mbf{z})
        + \frac{\beta^2}{2\gamma}\|z\|_{2-\eta}^2, \label{pro:AC-08_2}
    \end{align}
    for any $\gamma>0$ and some $\eta\in(0,2]$.
    Finally, calculations involving the $\Lambda$ operator  [terms of critical regularity] will follow the steps in \cite{BGLW22}. Integrating by parts in $\Omega$, and having in mind that the solutions $\mbf{u}=\{u,u_t\}$ and $\mbf{v}=\{v,v_t\}$ are strong, we have
    \begin{align}
        (\Lambda{u}-&\Lambda{v},z_t)_\Omega
        = \alpha(z_{xx},z_t)_\Omega
        + \delta(\|u_x\|_0^2u_{xx} - \|v_x\|_0^2v_{xx},z_t)_\Omega \notag\\
        %&= - \dfrac{d}{dt}\frac{\alpha}{2}\|z_x\|_0^2
        %-\delta(\|u_x\|_0^2z_{xx},z_t)_\Omega
        %-\delta([\|u_x\|_0^2-\|v_x\|_0^2]v_{xx},z_t)_\Omega \notag\\
        %&= - \dfrac{d}{dt}\frac{\alpha}{2}\|z_x\|_0^2
        %+\delta\|u_x\|_0^2\dfrac{d}{dt}\frac{1}{2}\|z_x\|_0^2
        %-\delta[\|u_x\|_0^2-\|v_x\|_0^2](v_{xx},z_t)_\Omega \notag\\
        %&= - \dfrac{d}{dt}\frac{\alpha}{2}\|z_x\|_0^2
        %+\dfrac{d}{dt}\left[\frac{\delta}{2}\|u_x\|_0^2\|z_x\|_0^2\right]
        %- \frac{\delta}{2}\|z_x\|_0^2\dfrac{d}{dt}\|u_x\|_0^2 \notag\\
        %&\quad
        %- \delta[\|u_x\|_0^2-\|v_x\|_0^2](v_{xx},z_t)_\Omega \notag\\
        &= \!- \dfrac{d}{dt}\left\{\frac{1}{2}\left[\alpha \!
        +\!\delta\|u_x\|_0^2\right]\|z_x\|_0^2\right\}
        \!-\! \delta\|z_x\|_0^2(u_{xx},u_t)_\Omega 
        + \delta[\|u_x\|_0^2\!-\!\|v_x\|_0^2](v_{xx},z_t)_\Omega  \label{pro:AC-08_3}
    \end{align}
    Integrating \eqref{pro:AC-08} over $[s,t]$, for $0\leq s < t\leq T$, and using \eqref{pro:AC-08_1}, \eqref{pro:AC-08_2} and \eqref{pro:AC-08_3} we obtain
    \begin{align*}
        \int_s^t\!\!(Fu-Fv,z_t)_\Omega
        &\leq 
        2\gamma\int_{\textcolor{black}{s/0}}^{\textcolor{black}{t/T}}\!\!\tilde{E}(\mbf{z})
        + \int_s^t\!\!(\Lambda{u}-\Lambda{v},z_t)_\Omega \notag\\
        &\quad
        \textcolor{black}{- or +} \int_s^t\!\!(fu-fv,z_t)_\Omega
        + \left(\frac{\kappa^2+\beta^2}{2\gamma}\right)(t-s)\sup_{[0,T]}\|z\|_{2-\eta}^2 \notag\\
        &\leq
        2\gamma\int_{0}^T\!\!\tilde{E}(\mbf{z})
        + \abs{\left.\frac{1}{2}[\alpha-\delta\|u_x\|_0^2]\|z_x\|_0^2\right|_s^t}
        + \delta\int_s^t\|z_x\|_0^2\|u_{xx}\|_0\|u_t\|_0 \notag\\
        &\quad
        + \delta\int_s^t\!\!\|z_x\|_0[\|u_x\|_0+\|v_x\|_0]\|v_{xx}\|_0\|z_t\|_0 \notag\\
        &\quad
        + \int_s^t\!\!(f(u)-f(v),z_t)_\Omega
        + \left(\frac{\kappa^2+\beta^2}{2\gamma}\right)(t-s)\sup_{[0,T]}\|z\|_{2-\eta}^2.
    \end{align*}
    Using once again the fact that solutions $\mbf{u}=\{u,u_t\}$ and $\mbf{v}=\{v,v_t\}$ belong to $B$ and, consequently, are uniformly bounded in $\cH$, we conclude
    \begin{align}
        \int_s^t\!\!(F(u)-F(v),z_t)_\Omega
        %&\leq 3\gamma\int_s^t\!\!E(\mbf{z})
        %+ C_B\sup_{[0,T]}\|z\|_{2-\eta}^2
        %+ C_B\delta T\sup_{[0,T]}\|z\|_{2-\eta}^2 \notag\\
        %&\quad
        %+ \int_s^t\!\!(fu-fv,z_t)_\Omega
        %+ \left(\frac{\kappa^2+\beta^2}{2\gamma}\right)T\sup_{[0,T]}\|z\|_{2-\eta}^2 \notag\\
        &\leq 3\gamma\int_s^t\!\!\tilde{E}(\mbf{z})
        + \int_s^t\!\!(f(u)-f(v),z_t)_\Omega
        + C_{B,T,\gamma}\sup_{[0,T]}\|z\|_{2-\eta}^2 \label{pro:QS-02b}
    \end{align}
    for $s\leq t$, every $\gamma>0$ and for some positive constant $C_{T,B,\gamma}$.
    Returning to \eqref{pro:QS-02} and using \eqref{pro:QS-02a} and \eqref{pro:QS-02b}, we obtain
    \begin{align}
        T\tilde{E}(\mbf{z}(T))
        + \int_0^T\!\!\tilde{E}(\mbf{z})
        \leq C\left[\tilde{E}(\mbf{z}(0))
        + \gamma(4+3T)\int_0^T\!\!\tilde{E}(\mbf{z}) 
        + (1+T)\int_0^T\abs{(f(u)-f(v),z_t)_\Omega} \right]
        \notag\\
        + \left[C_{B,T} + C\left(\frac{L_B^2T}{4\gamma} + (1+T)C_{B,T,\gamma}\right)\right]\lot{[0,T]}{z}. \label{pro:QS-03}
    \end{align}
    The last source term in the RHS of \eqref{pro:QS-03} is handled using the $C^1(\R)$ property of $f$ and the embedding $H^2_\ast\subset L^\infty(\Omega)$, as follows.
               \begin{align}
                          (1+T)\int_0^T&\abs{(f(u)-f(v),z_t)_\Omega}
                         \leq (1+T)\int_0^T\max_{|s|\leq\|u\|_{L^\infty(\Omega)}+\|v\|_{L^\infty(\Omega)}}|f'(s)| \|z\|_0\|z_t\|_0 \notag\\
                        &\leq \underbrace{(1+T)\max_{|s|\leq R_B}|f'(s)|}_{C_{f,T,B}}\int_0^T\|z\|_0\|z_t\|_0 \leq \gamma\int_0^T\!\!\tilde{E}(\mbf{z})
                          + \frac{T[C_{f,T,B}]^2}{4\gamma}\lot{[0,T]}{z}. \label{pro:QS-03a}
                \end{align}
    Combining \eqref{pro:QS-03} and \eqref{pro:QS-03a}, we conclude
                  \begin{align*}
                           T\tilde{E}(\mbf{z}(T)) + &(1-\gamma(5+3T)C)\int_0^T\!\!\tilde{E}(\mbf{z}) \notag\\
                          &\leq C\tilde{E}(\mbf{z}(0))
                           + \left[C_{B,T} + C\left(\frac{L_B^2T}{4\gamma} + (1+T)C_{B,T,\gamma} \frac{T[C_{f,T,B}]^2}{4\gamma}\right)\right]\lot{[0,T]}{z}.
                 \end{align*}
    Finally, choosing $\gamma=\gamma(T)>0$ small enough and $T>0$ sufficiently large, we arrive at
    \begin{align}\label{pro:QS-04}
        \tilde{E}(\mbf{z}(T))
        \leq &\underbrace{\frac{C}{T}}_{\theta}\tilde{E}(\mbf{z}(0))
        + \underbrace{\frac{1}{T}\left[C_{B,T} + C\left(\frac{L_B^2T}{4\gamma} + (1+T)C_{B,T,\gamma} + \frac{T[C_{f,T,B}]^2}{4\gamma}\right)\right]}_{C_{0,T,B}}\lot{[0,T]}{z} \notag\\
       &\leq \theta \tilde{E}(\mbf{z}(0)) + C_{0,T,B}\lot{[0,T]}{z},
    \end{align}
    where $0<\theta<1$ and $C_{0,T,B}>0$ is constant. Let us consider $m\in\N$. By standard iteration via semigroup property, the above inequality \eqref{pro:QS-04} implies
    \begin{align*}
        \tilde{E}(\mbf{z}(mT)) 
        %\leq \sigma E((m-1)T) + C_{0,T,B}\lot{[mT,(m-1)T]}{z} \notag\\
        \leq \theta^m \tilde{E}(\mbf{z}(0)) + C_{0,T,B}\sum_{j=0}^{m-1}\lot{[(m-j)T,(m-j-1)T]}{z}.
    \end{align*}
    Therefore, if $t>0$ then there exists $m\in\N$ such that $t=mT+r$, where $r\in[0,1]$ and, consequently,
    \begin{align*}
        \tilde{E}(\mbf{z}(t)) = C_A\tilde{E}(\mbf{z}(mT))
        %\leq C_A\theta^m E(\mbf{z}(0)) + C_AC_{0,T,B}\lot{[0,t]}{z} \notag\\
        %\leq C_Ae^{\ln \theta^m}E(\mbf{z}(0)) + C_AC_{0,T,B}\lot{[0,t]}{z} \notag\\
        \leq C_Ae^{-r\ln\theta/T}e^{\frac{\ln\theta}{T}t}\tilde{E}(\mbf{z}(0)) + C_AC_{0,T,B}\lot{[0,t]}{z},
    \end{align*}
    where $C_A>0$ is constant that depend on the linear semigroup generated by the plate operator $A$, and do not depend on $t$. Finally, the above inequality can be rewritten as follows
    \begin{equation}\label{def:AQS-02}
        \|S_t\mbf{u}_0-S_t\mbf{v}_0\|_\cH^2
        \leq \underbrace{C_Ae^{\frac{|\ln\theta|}{T}}}_{C_{A,T}}e^{\frac{\ln\theta}{T}t}\|\mbf{u}_0-\mbf{v}_0\|_\cH^2
        + \underbrace{2C_AC_{0,T,B}}_{d(t)} \lot{[0,t]}{z}.
    \end{equation}
    Set $g_0(t) = C_{A,T}e^{\frac{\ln\theta}{T}t}$, $d(t)=2C_AC_{0,T,B}$ and $\mu_{H^2_\ast}(u(s)-v(s))=\|u(s)-v(s)\|_{2-\eta}^2$, for $s\in[0,t]$ and for any $\eta\in(0,2]$. Notice that $l(t)$ is constant and, therefore, locally Lipschitz in $[0,\infty)$. Also $g_0(t)\in L^1(\R)$ with property $g_0(t)\to0$ as $t\to\infty$ ($0<\theta<1$). Finally, $\mu_{H^2_\ast}$ is a compact seminorm in $H^2_\ast$. 
    
    Inequalities \eqref{def:AQS-01} and \eqref{def:AQS-02} implies that $\{\cH,S_t\}$ is asymptotically quasi-stable on $B$ and, consequently, quasi-stable on $B$. Since $B$ is an arbitrary bounded forward invariant set on $\cH$, the proof is complete.
\end{proof}
%\end{proof}

\subsubsection[Dimension and regularity]{Fractal Dimension and Regularity of the Attractor -Completion of the Proof od Part II, Theorem \ref{thm:Att} }
The following geometrical properties of the attractor are a direct consequence of the asymptotic quasi-stability property proved in the previous section.

\begin{corollary}[Finite Fractal Dimension]\label{cor:FFD}
    $\mathfrak{A}$ has finite fractal dimension in $\cH$.    
\end{corollary}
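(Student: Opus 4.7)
The plan is to deduce finite fractal dimensionality directly from the quasi-stability property established in Proposition \ref{pro:QS}, via a standard abstract result in the theory of dynamical systems (see Theorem 3.4.19 and the discussion in Section 3.4 of \cite{Chu15}, or \cite{Memoir}). Since $\mathfrak{A}$ is a compact global attractor, it is in particular a bounded, closed, forward invariant subset of $\cH$. Hence Proposition \ref{pro:QS} applies on $\mathfrak{A}$: there exist $t_\ast>0$, a constant $0\leq p<1$, a Banach space $Z$, a globally Lipschitz map $K:\mathfrak{A}\to Z$ and a compact seminorm $n_Z$ on $Z$ such that
\begin{equation*}
\|S_{t_\ast}\mbf{y}_1-S_{t_\ast}\mbf{y}_2\|_{\cH}
\leq p\,\|\mbf{y}_1-\mbf{y}_2\|_{\cH}
+ n_Z(K\mbf{y}_1-K\mbf{y}_2),\qquad \mbf{y}_1,\mbf{y}_2\in\mathfrak{A}.
\end{equation*}
In our concrete situation, $Z$ can be taken as $C(0,t_\ast;H^{2-\eta}(\Omega))$ for some $\eta\in(0,2]$, with $n_Z$ the supremum of the compactly embedded $H^{2-\eta}$-norm along the trajectory segment, as already produced in \eqref{def:AQS-02}.

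From this inequality I would invoke the abstract criterion (\cite[Theorem 3.4.19]{Chu15}, or \cite[Theorem 7.9.6]{Memoir}) which asserts that quasi-stability on an invariant compact set $\mathfrak{A}$ implies $\mathrm{dim}_f\mathfrak{A}<\infty$. The mechanism underlying that criterion is the following: since $S_{t_\ast}\mathfrak{A}=\mathfrak{A}$, one covers $\mathfrak{A}$ by balls of radius $\varepsilon$, applies $S_{t_\ast}$, and uses the quasi-stability estimate together with the compactness of $n_Z$ to cover the image by a controlled number of balls of radius $p\varepsilon+\varepsilon/2$. Iterating yields a geometric bound on the covering number $n\{\mathfrak{A},\tau\}$, hence a logarithmic bound on $\mathrm{dim}_f\mathfrak{A}$ expressed through the contraction rate $p$ and the covering dimension of the unit ball of $Z$ in the seminorm $n_Z$.

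The verification is therefore essentially mechanical once Proposition \ref{pro:QS} is in place; the heavy lifting has already been done in the previous subsection. The only subtlety I would double-check is that the Lipschitz map $K$ and the seminorm $n_Z$ produced in \eqref{def:AQS-02} indeed satisfy the precise hypotheses of the abstract dimension theorem in \cite{Chu15}---in particular, that $K$ maps $\mathfrak{A}$ (viewed in $\cH$) Lipschitz-continuously into $Z$ and that $n_Z$ is a compact seminorm on $Z$. Both are immediate: $K\mbf{y}:=\{S_s\mbf{y}\}_{s\in[0,t_\ast]}$ is Lipschitz into $C(0,t_\ast;\cH)$ by Hadamard well-posedness (Theorem \ref{thm:EU}), and the seminorm $\sup_{s\in[0,t_\ast]}\|\cdot\|_{2-\eta}$ is compact relative to $\sup_{s\in[0,t_\ast]}\|\cdot\|_{2,\ast}$ thanks to the Aubin-Lions lemma. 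This completes the argument.
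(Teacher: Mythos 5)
Your proposal is correct and follows essentially the same route as the paper: apply the quasi-stability estimate of Proposition \ref{pro:QS} on the (bounded, forward invariant) attractor and invoke the abstract dimension theorem for quasi-stable systems in \cite{Chu15}. The only slip is the citation: the dimension result is Theorem 3.4.5 in \cite{Chu15} (the one the paper uses), while Theorem 3.4.19 is the regularity statement used later for Corollary \ref{cor:Reg}.
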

\begin{proof}
    Under the assumption that $b_0 >0 $, the finite dimension of $\mathfrak{A}$ follows from Theorem 3.4.5 on \cite{Chu15} page 131.
\end{proof}

Since $\{\cH,S_t\}$ is quasi-stable on $\mathfrak{A}$ (see Definition \ref{def:QS}) for every $T>0$, an estimate for the fractal dimension of the attractor is also provided by Theorem 3.4.5 in \cite{Chu15}, namely
\begin{equation*}
    \textrm{dim}_f\mathfrak{A} \leq \left[\ln\frac{2}{1+p}\right]^{-1}\cdot\ln \textrm{m}_Z\left(\frac{4L_K}{1-p}\right)
\end{equation*}
where $0<p<1$ (depending on $T$),
$$Z=W_1(0,T)\equiv\left\{z\in L^2(0,T;H^2_\ast):\|z\|_Z^2\equiv \|z\|_{L^2(0,T;H^2_\ast)}^2+\|z_t\|_{L^2(0,T;L^2(\Omega))}^2<\infty\right\}$$
and $L_K$ is the Lipschitz constant of $K:\cH\to Z$ given by $K\mbf{y}_0 = y$ where $\{y(t),y_t(t)\}=S_t\mbf{y}_0$ is the corresponding (generalized) solution of \eqref{sbe}-\eqref{IC}, and $\textrm{m}_Z(R)$ is the maximum number of elements $z_i$ in the ball $\{z\in Z:\|z\|_Z\leq R\}$ possessing the property $$1<\textrm{n}_Z(z_i-z_j)\equiv C_T\sup_{s\in[0,T]}\|z_i(s)-z_j\|_0$$ 
for $i\neq j$, where $C_T>0$ is constant.
%\end{proof}

\begin{corollary}[Regularity]\label{cor:Reg}
 Under the assumption that $b_0>0$, any full trajectory \\ \(\gamma=\left\{\mbf{u}(t)=(u(t),u_t(t)):t\in\R\right\}\) that belongs to $\mathfrak{A}$ enjoys the  following regularity properties:
    \begin{equation*}
        u_t\in L^\infty(\R,H^2_\ast)\cap C(\R,L^2(\Omega))\quad\mbox{and}\quad u_{tt}\in L^\infty(\R,L^2(\Omega)).
    \end{equation*}
    Moreover, there exists $R>0$ such that
    \begin{equation*}
        \|u_t(t)\|_{2,\ast}^2 + \|u_{tt}(t)\|_0^2 \leq R,\quad t\in\R.
    \end{equation*}
\end{corollary}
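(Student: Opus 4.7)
The plan is to exploit the quasi-stability property established in Proposition \ref{pro:QS}, applied to time-shifts of a full trajectory. Fix a full trajectory $\gamma=\{\mbf{u}(t):t\in\R\}\subset\mathfrak{A}$. Since $\mathfrak{A}$ is strictly invariant under $S_t$, the shifted orbit $t\mapsto\mbf{u}(t+h)$ is also a full trajectory contained in $\mathfrak{A}$, so for every $s\in\R$ and every $h\neq 0$ both $\mbf{u}(s)$ and $\mbf{u}(s+h)$ lie in the bounded set $\mathfrak{A}$. Setting $\mbf{z}^h(t)=h^{-1}[\mbf{u}(t+h)-\mbf{u}(t)]$, applying the quasi-stability inequality of Definition \ref{def:QS} with $\mbf{y}_1=\mbf{u}(s)$, $\mbf{y}_2=\mbf{u}(s+h)$ at the quasi-stability time $t_\ast$, and dividing by $h$, yields
\begin{equation*}
    \|\mbf{z}^h(s+t_\ast)\|_\cH \leq p\,\|\mbf{z}^h(s)\|_\cH + \tfrac{1}{h}\,\mathrm{n}_Z\bigl(K\mbf{u}(s)-K\mbf{u}(s+h)\bigr),
\end{equation*}
with contraction constant $p\in(0,1)$ independent of $s$ and $h$.

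The first step is to bound the right-hand compact-seminorm term uniformly. From the explicit form of $K$ and $\mathrm{n}_Z$ recalled after Corollary \ref{cor:FFD}, $K\mbf{u}(s+h)-K\mbf{u}(s)$ is the $H^2_\ast$-valued curve $\tau\mapsto u(s+\tau+h)-u(s+\tau)$ on $[0,t_\ast]$, and $\mathrm{n}_Z$ is proportional to $\sup_{\tau\in[0,t_\ast]}\|\cdot\|_0$. Since $\mathfrak{A}$ is bounded in $\cH=H^2_\ast\times L^2(\Omega)$, the second component of every trajectory satisfies $\|u_t(\tau)\|_0\leq C_\mathfrak{A}$ pointwise, whence $\|u(s+\tau+h)-u(s+\tau)\|_0\leq C_\mathfrak{A}\,h$ and $h^{-1}\mathrm{n}_Z(\cdot)\leq C$ uniformly in $s$ and $h$.

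Iterating the recursion backward $n$ steps from a chosen $s_0$ produces
\begin{equation*}
    \|\mbf{z}^h(s_0)\|_\cH \leq p^n\,\|\mbf{z}^h(s_0-nt_\ast)\|_\cH + \tfrac{C}{1-p}.
\end{equation*}
Since $\mbf{u}(\cdot)$ lies in the bounded set $\mathfrak{A}$, the crude estimate $\|\mbf{z}^h(t)\|_\cH\leq 2\operatorname{diam}_\cH(\mathfrak{A})/h$ holds for every $t$, so the prefactor $p^n\|\mbf{z}^h(s_0-nt_\ast)\|_\cH$ vanishes as $n\to\infty$ for each fixed $h>0$. Consequently $\sup_{s\in\R,\,h\neq 0}\|\mbf{z}^h(s)\|_\cH\leq C/(1-p)$. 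Extracting a weak-$\ast$ subsequential limit in $L^\infty(\R,\cH)$ as $h\to 0$, the distributional limit is $(u_t,u_{tt})$ and lower semicontinuity of the norm transfers the bound to $\|u_t(t)\|_{2,\ast}^2+\|u_{tt}(t)\|_0^2\leq R$ for every $t\in\R$. The additional continuity $u_t\in C(\R,L^2(\Omega))$ then follows by an Aubin--Lions argument from $u_t\in L^\infty(\R,H^2_\ast)$, $u_{tt}\in L^\infty(\R,L^2(\Omega))$, and the compact embedding $H^2_\ast\subset\subset L^2(\Omega)$.

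The main technical obstacle is precisely the absorption of the factor $1/h$ in front of the compact seminorm. This forces $\mathrm{n}_Z$ to involve only the $L^2$-norm of the first component --- strictly weaker than the energy $H^2_\ast$-norm --- which, combined with the automatic bound $\|u_t\|_{L^\infty(\R,L^2(\Omega))}\leq C_\mathfrak{A}$ inherited from the invariance of $\mathfrak{A}$, produces exactly the required cancellation. Were $\mathrm{n}_Z$ to control a norm of the same strength as $\|u\|_{2,\ast}$, the $1/h$ factor could not be absorbed and the bootstrap would fail.
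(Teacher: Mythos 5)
Your proposal is correct in substance, but it follows a different route than the paper: the paper's proof is essentially a citation --- it restates the stabilizability (asymptotic quasi-stability) inequality obtained in the proof of Proposition \ref{pro:QS} and then invokes Theorem 3.4.19 of \cite{Chu15}, the abstract ``quasi-stability implies regularity of trajectories on the attractor'' result --- whereas you unpack that black box and prove the statement directly, via difference quotients of time-shifted trajectories, backward iteration of the quasi-stability inequality along the full trajectory, and a weak-$\ast$ passage to the limit. This is precisely the mechanism inside the cited theorem, so your argument buys self-containedness and makes explicit where $b_0>0$ enters, at the cost of reproving a known abstract result.

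One step deserves more care. Your uniform bound $h^{-1}\mathrm{n}_Z\bigl(K\mbf{u}(s)-K\mbf{u}(s+h)\bigr)\le C$ hinges on the compact seminorm involving only $\sup_\tau\|\cdot\|_0$, which you take from the discussion following Corollary \ref{cor:FFD}. What is actually derived in the proof of Proposition \ref{pro:QS} is the stabilizability inequality with seminorm $\mu_{H^2_\ast}(z)=\|z\|_{2-\eta}$, and the treatment of the term $\beta(z_y,z_t)$ there effectively requires $2-\eta\ge 1$; with an $H^1$-level seminorm the estimate $\|u(\cdot+h)-u(\cdot)\|_{2-\eta}\lesssim |h|$ is not available, since only $\|u_t\|_0$ is controlled on $\mathfrak{A}$. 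Passing from the $H^{2-\eta}$-level seminorm to the $L^2$-level one you use requires the interpolation $\|z\|_{2-\eta}^2\le \epsilon\|z\|_{2,\ast}^2+C_\epsilon\|z\|_0^2$ together with absorption of the $\epsilon$-term into the contraction (using the Lipschitz estimate \eqref{def:AQS-01}, taking $T$ large and then $\epsilon$ small) --- exactly the step carried out inside the cited theorem. Since the paper asserts the $L^2$-form of $\mathrm{n}_Z$ before Corollary \ref{cor:Reg}, your reliance on it is defensible, but a fully self-contained proof should include this reduction; your closing remark identifies the strength of the seminorm as the crux, yet the reduction itself is not performed.
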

\begin{proof}
    As in the proof of Proposition \ref{pro:QS}, the dynamical system $\{\cH,S_t\}$ is asymptotically quasi-stable on $\mathfrak{A}$, with \textit{stabilizability inequality}
    \begin{align}
        \|S_t\mbf{u}_0-S_t\mbf{v}_0\|_\cH^2
        \leq g_0(t)\|\mbf{u}_0-\mbf{v}_0\|_\cH^2
        + l(t)\sup_{0\leq s\leq t}\left[\mu_{H^2_\ast}(u(s)-v(s))\right]^2,\notag
    \end{align}
    for $\mbf{u}_0,\mbf{v}_0\in\mathfrak{A}$, 
    where $u$ and $v$ are the corresponding (generalized) solutions, and 
    $g_0(t) = ce^{-\omega t}$, for some constants $c>0$ and $\omega>0$, $l(t)>0$ is constant for every $t$, and $\mu_{H^2_\ast}(z)=\|z\|_{2-\eta}$, for $\eta\in(0,2]$ is a compact seminorm in $H^2_\ast$. Therefore, the conclusion follows from Theorem 3.4.19 in \cite{Chu15} page 140.
\end{proof}

\section{Proof of  Abstract Theorem \ref{0.5.1}}\label{ultdisabs}
\textbf{ Ultimate dissipativity for a model with non-conservative forces:}
In this section we provide a proof of the ultimate dissipativity property for a general abstract model with non-conservative force $ N(u)$.  Thus, the resulting model is non-dissipative where  classical Lyapunov's function approaches fail, as recognized in \cite{Memoir,BGLW22,pata} where various scenarios of  non-dissipative models were studied.  Particularly challenging is the case when {\it nonlinear damping }is also present in the system. This is the case under consideration below. The main idea of the proof is based nn the so called ``barrier's method'' inspired by works of \cite{haraux} where questions of ultimate boundedness of nonlinearly damped  abstract ``harmonic oscillator'' with non-autonomous forcing has been considered. 

With reference to the system $(\cH,S_t)$, under the assumptions formulated in Theorem  \ref{0.5.1},  the positive energy, the total energy  and the energy relation associated  with \eqref{model2} have the following form:
            \begin{align}
                  &E(u(t),u_t(t))\!=\!\frac{1}{2}\left(|\cA^{1/2}(u)|^2+ | u_t|^2\right)\!+\!\Pi_0(u), \quad \cE(u(t),u_t(t))\!=\!E(u(t),u_t(t))\!+\!\Pi_1(u),
                  \label{en1} \\ 
                 &\cE(u(t),u_t(t))+k\int_0^t(Du_t(s),u_t(s))ds= \cE(u_0,u_1)+\int_0^t(N(u(s)),u_t(s))ds.\label{en2}
            \end{align}
We also have the following inequality on energy which follows from the assumption \eqref{1.5} for $\eta=\frac{1}{4}$ and $c=c_\eta=c_{1/4}$;
     \begin{equation}\label{EnergyIn}
         \frac{1}{2}E(u,u_t)-c \leq \cE(u,u_t) \leq 2E(u,u_t)+c.
    \end{equation}
    
\begin{proof}{Proof of Theorem \ref{0.5.1}:}
   The proof is based on the barrier's method as discussed before. \\
   
   %In what follows we shall take  mass operator  $M\equiv \cI $, as an extension to more  general  mass operators is  relatively straightforward.  
   {\bf Step 1:} [Preliminaries.]
To begin with, consider the following   family of functionals
     \begin{equation}\label{lya}
         V_\varepsilon(u,u_t)= \cE(u,u_t)+\varepsilon (u_t,u), \quad \text{$\varepsilon >0$ .%will be chosen later.
         }
     \end{equation}
     Since $\cD(\cA^{\frac{1}{2}})$ is continuously embedded in $L_2(\Omega) $  we have that 
     \begin{equation}\label{eniq}
        (\cI u_t,u) \leq \frac{1}{2}\left(|u_t|^2+\lambda |\cA^{\frac{1}{2}}u|^2\right)\leq max\{1,\lambda\}E(u,u_t),
     \end{equation} where $\lambda> 0$ is the (embedding) constant. 
     Combining above inequality with \eqref{EnergyIn}, we deduce for some constants $C_1, C_2, c$,
     \begin{align}\label{V-E}
         C_1\cdot E(u,u_t)-c \leq V_\varepsilon(u, u_t) \leq C_2 \cdot E(u,u_t)+c.
     \end{align}
  Differentiating the function \eqref{lya} [justified for strong solutions], and replacing $\cE$ using $\eqref{eniq}$ yields, 
           \begin{align}
              \frac{d}{dt}V_\varepsilon&(u,u_t)
         = -\kappa(Du_t, u_t)+(N(u),u_t)+\varepsilon \cdot \left((u_t,u_t)+ (u_{tt},u) \right)\notag\\
         &=-\kappa(Du_t, u_t)\!+\!(N(u),u_t)+\varepsilon [c_0+c_1(Du_t, u_t)]+ \varepsilon \! \cdot \! \left(-\cA u-\kappa Du_t-\Pi^\prime(u)\!+\!N(u),u\right)\notag\\
               \intertext{\text{ using \eqref{A3b} and then assumption \eqref{A2}, \eqref{A3a} on the non-conservative term, we have}}
          & \quad \quad \quad +\varepsilon \left( \eta |\cA^{1/2} u|^2-c_2 \Pi_0(u)+c_3+c_4\left[1+E(u,u_t)\right]^\gamma (Du_t, u_t)\right) +\varepsilon c_0 \notag \\ 
          & \leq (-\kappa+\eta \kappa+\varepsilon c_1)(Du_t,u_t)+\delta E(u,u_t)-\varepsilon((1-\eta) |\cA^{1/2} u|^2-c_2 \Pi_0(u)) \notag \\
          & \quad \quad \quad +b(1/\delta)+\varepsilon c_4\left[1+E(u,u_t)\right]^\gamma (Du_t, u_t)+\varepsilon (c_0+c_3)\notag \\ 
            \intertext{\mbox{since $\!\varepsilon \!\left(\!-\!(1\!-\!\eta)|\cA^{1/2}u|^2\!-\!c_2\Pi_0(u) \!\right)\!\leq\! -\! 2d^\prime \varepsilon E(u,u_t)\!+\!d^\prime \varepsilon(u_t,u_t), d^\prime\!=\!min\{\!(1\!-\eta),c_2\}$}}
               & \quad  \text{using \eqref{A2}, }(u_t,u_t)\! \leq \! c_0+c_1(Du_t,u_t)] \text{and therefore}\notag \\
          & \leq (-\kappa+\eta \kappa+2\varepsilon c_1) (Du_t,u_t)+(\delta-2d^\prime \varepsilon) E(u,u_t) \notag \\
          & \quad \quad \quad +b(1/\delta)+\varepsilon \left(c_4\left[1+E(u,u_t)\right]^\gamma \right)(Du_t, u_t)+\varepsilon (2c_0+c_3) \notag \\
              \text{ since  }\eta<1,& \mbox{ suitable choice of } \varepsilon \mbox{ gives us } d_3'>0, \mbox{ where } d_3'=\kappa(1-\eta)-2\varepsilon c_1 \notag \\
            \intertext{\mbox{ as well as choose appropriate  $\delta $ proportional to $\varepsilon$  so that  $\delta< d^{\prime}\varepsilon$, for $d_0'=2c_0+c_3$ }}
          & \leq -\varepsilon E(u,u_t)+\varepsilon d_0'+b(1/\delta)+\big(-d_3'+\varepsilon \cdot c_4 \left[1+E(u,u_t)\right]^\gamma \big)(Du_t, u_t). \notag
    \end{align}
   Using the relation obtained in \eqref{V-E} in between the positive energy and the Lyapunov functional we get ($V_\varepsilon(t)$ should be understood as $V_\varepsilon(u(t),u_t(t))$),
\begin{equation*}
    \frac{d}{dt}V_\varepsilon(t) \leq -\frac{\varepsilon}{C_2}(V_\varepsilon(t)-c)+\varepsilon d_0'+b(1/\delta)+\big(\!\varepsilon \cdot c_4\left[1+E(t)\right]^\gamma -d_3'\big)(Du_t(t), u_t(t)),
   \end{equation*}
   once again by rescaling $\varepsilon:=\varepsilon/C_2$, 
   \begin{equation*}
       \frac{d}{dt}V_\varepsilon (t)\leq -\varepsilon V_\varepsilon(t)+\varepsilon c +\varepsilon d_0'+b(1/\delta)+\big(\varepsilon \cdot c_4 \left[1+E(u,u_t)\right]^\gamma -d_3' \big)(Du_t(t), u_t(t)).
   \end{equation*}
 For $d_0=c+d_0', d_2=c_4,  d_3=d_3'/d_2$ \mbox{ and }$ d_1$,\mbox{ satisfies } $b(d_1/\delta)\geq \frac{1}{d_0}b(1/\delta),$ \hfill \break
 rewriting the above inequality in terms of the new constants,
\begin{align}
 \frac{d}{dt}&V_\varepsilon(t) +\varepsilon V_\varepsilon(t) \leq d_0\!\left\{\varepsilon+ b\left(\frac{d_1}{\varepsilon}\right)\!\right\}+d_2\left\{\varepsilon \left[ 1+E(t)\right]^{\gamma} -d_3 \right\}\cdot(Du_t(t),u_t(t)), \notag \\
                      \intertext{\text{ multiplying by $e^{\varepsilon t}$ and integrating on $t\geq s \geq 0$ we get,}}
  %\frac{d}{dt}&\left(e^{\varepsilon\cdot t}V_\varepsilon(u,u_t)\right)  \leq e^{\varepsilon \! \cdot t} \! \! \cdot d_0\!\! \left\{\!\varepsilon+ b\left(\frac{d_1}{\varepsilon}\right)\!\!\right\}+e^{\varepsilon \cdot t} \cdot d_2\left\{\varepsilon \left[ 1+E(u,u_t)\right]^{\gamma} -d_3 \right\}\cdot(Du_t,u_t) \notag \\
                % &\text{integrating both side on $(s,t)$ we get,}\notag \\
       % &V_\varepsilon(u(t),u_t(t) \leq e^{-\varepsilon(t-s)}V_\varepsilon(u(s),u_t(s))+  \left(1-e^{-\varepsilon\cdot(t-s)}\right) \frac{d_0}{\varepsilon}\left\{\varepsilon+ b\left(\frac{d_1}{\varepsilon}\right)\!\right\}\notag \\
       %  &\hspace{2cm}+\int_s^te^{-\varepsilon\cdot(t-\tau)\!} \cdot d_2\left\{\varepsilon \left[ 1+E(u,u_t)\right]^{\gamma} -d_3 \right\}\cdot(Du_t,u_t) d\tau
    & V_\varepsilon(t) \leq e^{-\varepsilon(t-s)}V_\varepsilon(s)\!+ \! d_0\! \left\{\!\!1\!+\! \frac{1}{\varepsilon} b\left(\frac{d_1}{\varepsilon}\right)\!\!\right\}\notag \\
        &\quad \quad \quad \quad +\!d_2\int_s^t \!\!e^{-\varepsilon(t\!-\!\tau)}\left\{\varepsilon \left[\! 1\!+\!E(\tau)\right]^{\gamma}\! -\!d_3 \right\}\!\cdot\!(Du_t(\tau),u_t(\tau)\!) d\tau \label{55a}\\
                 % &\text{using the relation between kinetic energy and Lyapunov functional on \ref{V-E}, } \notag \\
       % V_\varepsilon&(t) \leq V_\varepsilon(s)+  d_0 \left\{1+ \frac{1}{\varepsilon} b\left(\frac{d_1}{\varepsilon}\right)\!\right\}+\int_s^t \!d_2\left\{\varepsilon \left[ 1+V_\varepsilon(\tau)\right]^{\gamma} -d_3 \right\}\cdot(Du_t(\tau),u_t(\tau)) d\tau  \\ 
             % & \mbox{ where }d_2=d_2 \cdot max\{1+\frac{c}{C_1},\frac{1}{C_1}\}, \quad d_3=d_2 d_3/d_2\text{} \notag
   & \mbox{or, }C_1 \cdot E \left(t \right)-c \leq e^{-\varepsilon(t-s)}\left(C_2 E(s)+c\right)+d_0\left\{1+\frac{1}{\varepsilon} b\left(\frac{d_1}{\varepsilon}\right)\right\}\notag \\
         &\hspace{2cm}+d_2 \int_s^te^{-\varepsilon(t-\tau)}\left\{\varepsilon \left[ 1+E(\tau)\right]^{\gamma} -d_3 \right\}(Du_t(\tau),u_t(\tau) )d\tau \notag\\
               \intertext{\text{using property of exponential function and rescaling of $d_i$  after dividing by $C_1$, we obtain}}
                 % &\text{ on both sides and let, $c_0=2c/C_1$  and rescaling $d_0$ again}\notag \\
       & E \left(t \right) \leq (C_2/C_1) \cdot E(u(s),u_t(s))+{\frac{2c}{C_1}}+d_0\left\{1+\frac{1}{\varepsilon} b\left(\frac{d_1}{\varepsilon}\right)\right\}\notag \\
       &\hspace{2cm}+d_2 \int_s^t e^{-\varepsilon\cdot(t-\tau)}\left\{\varepsilon \left[ 1+E(\tau)\right]^{\gamma} -d_3 \right\}(Du_t(\tau),u_t(\tau)) d\tau. \label{55b}
\end{align} 
 Note that achieving dissipativity from the above relation is straightforward when $\gamma=0$ by simply selecting $\varepsilon<d_3$. However, when $\gamma>0$, the integrand grows significantly. Trying to control  it by choosing a small $\varepsilon$ results in losing control over the third term, as its growth is impacted by $\frac{1}{\varepsilon}$. To address this second case, we continue the proof  in three distinct steps. Step 1 precisely shows that for a particular choice of $\varepsilon$ the integrand in the third term can be controlled.
 \paragraph{Step  2.} 
   We are now in position to apply barrier's method. The goal is to prove \eqref{55a} without the integral term. Let $\sigma:\mathbb{R}^+ \to \mathbb{R}^+\!-\!\{0\}$ is a map defined by $ E(t) \mapsto \sigma(E(t)) $  where $\sigma(E(t))$ is the solution of the following equation [we denote $\sigma(E(t))$ by $\sigma(t)$]

 \begin{equation}\label{sigma}
     \left[1+\frac{C_2}{C_1}E(t)+\frac{2c}{C_1}+d_0\left\{1+\sigma(t) \cdot b(d_1\sigma(t))\right\}\right]^\gamma=\frac{d_3\sigma(t)}{2}.
 \end{equation}
It is clear that the solution of the above equation not only exists but also unique  due to (\ref{balance}) and  as the left side is a continuous increasing function which exhibits slower growth rate than the right side due to $\gamma <1$. 
   We claim that $\mbox{for } \varepsilon_s:=\left[\sigma(s)\right]^{-1} $(the inverse function) the integrand 
 \begin{equation}\label{claim}
     \varepsilon_s \left[ 1+E(t)\right]^{\gamma} -d_3 \leq 0 \quad \mbox{ for all } t\geq s. 
 \end{equation} 
\begin{align*}\label{t=s}
      \mbox{at }t=s,\quad\varepsilon_s \left[ 1+E(s)\right]^{\gamma} -d_3 &\leq \varepsilon_s \left[1+\frac{C_2}{C_1}E(s)+\frac{2c}{C_1}+d_0\left\{1+\sigma(s) \cdot b(d_1\sigma(s))\right\}\right]^\gamma-d_3\notag \\
      &\leq \varepsilon_s \cdot \frac{d_3 \cdot \sigma(s)}{2}-d_3\leq\frac{-d_3}{2}<0,
  \end{align*}
  and the function $\varepsilon_s \left[ 1+E(t)\right]^{\gamma} -d_3$ is continuous hence there is an interval $[s,s+T)$ where \eqref{claim} holds. If $T$ is infinite then we are done. In the case of $T \!<\! \infty$ there must exists $T^\ast\!>\!s$ so that         \begin{equation}\label{Tast}
               \begin{cases}
                      \varepsilon_s \left[ 1+E(t)\right]^{\gamma} -d_3 < 0;\text{  for } t \in [s, s+T^\ast), \\
                     \varepsilon_s \left[ 1+E(t)\right]^{\gamma} -d_3 = 0;\text{  for } t=T^\ast
               \end{cases}
             \end{equation}
             \begin{equation}\label{Tast2}
                \mbox{applying \eqref{Tast} on \eqref{55a} we get, }\quad E \left(t \right) \leq \frac{C_2}{C_1} \cdot E(s)+{\frac{2c}{C_1}}+d_0\left\{1+\frac{1}{\varepsilon_s} b\left(\frac{d_1}{\varepsilon_s}\right)\right\}.
            \end{equation}
However, by \eqref{sigma} and \eqref{Tast2} give us, for $t \in [s, s+T^\ast)$
                   \begin{align}
                              \varepsilon_s \! \left[ 1\!+\!E(t)\right]^{\gamma} \!-\!d_3 \!\leq \!\varepsilon_s \left[ 1\!+\!\frac{C_2}{C_1} \cdot E(s)\!+\!{\frac{2c}{C_1}}\!+\!d_0\left\{1\!+\!\frac{1}{\varepsilon_s} b\left(\frac{d_1}{\varepsilon_s}\right)\right\}\right]^{\gamma} \!\!-\!d_3 \!\leq\! \varepsilon_s \frac{d_3\sigma(s)}{2}\!-\!d_3\!=\!\frac{-d_3}{2}.\notag
                      \end{align}

This is a contradiction to the fact that the function $\varepsilon \left[ 1+E(t)\right]^{\gamma} -d_3 $ is continuous at $T^\ast$.
 Which concludes that $\varepsilon \left[ 1+E(u(t),u_t(t))\right]^{\gamma} -d_3 \leq 0; \hspace{.4cm} \forall t\geq s$ when $\varepsilon=\sigma\left[E(u(s),u_t(s))\right]^{-1}.$
 \\
According to \eqref{claim}, we have
           \begin{equation}\label{21}
                      V_\varepsilon(t) \leq e^{-\varepsilon(t-s)}V_\varepsilon(s)+  d_0\left\{1+\frac{1}{\varepsilon} b\left(\frac{d_1}{\varepsilon}\right)\right\}\text{for }  t\geq s \geq 0,
           \end{equation}
  where,  $\varepsilon=\sigma(E(s))^{-1}$ and $\sigma$ satisfies \eqref{sigma}. In particular for $s=0, \varepsilon=\sigma(E(0))^{-1}$
              \begin{align}\label{22}
                      &V_\varepsilon(t) \leq e^{-\varepsilon t}V_\varepsilon(0)+  d_0\left\{1+\frac{1}{\varepsilon} b\left(\frac{d_1}{\varepsilon}\right)\right\}\text{for }  t \geq 0 \notag \\
                       \text{hence, }&V_\varepsilon(t) \leq V_\varepsilon(0)+  d_0\left\{1+\frac{1}{\varepsilon} b\left(\frac{d_1}{\varepsilon}\right)\right\}\text{for }  t \geq 0.
                \end{align}
\paragraph{Step 3.}  In this step we will prove that $V_\varepsilon(t)$ is uniformly bounded for all $t>0$ when $V_\varepsilon(u_0,u_1)\leq R$ for some $R>0$. From \eqref{22} we have,
        \begin{align}\label{23}
                &V_\varepsilon(t) \leq R+  d_0\left\{1+\frac{1}{\varepsilon_R} b\left(\frac{d_1}{\varepsilon_R}\right)\right\}=K_R(say)  \text{ for }t \geq 0 \quad \mbox{ implies }\sigma(V_\varepsilon(t))<\sigma(K_R) \notag\\
             &\mbox{or,  }\varepsilon(V_\varepsilon(s))>\varepsilon(K_R)= \varepsilon_R(say) \text{ for }s \geq 0,  \notag\\
                   &\text{which implies, } V_\varepsilon(t)\leq e^{-\varepsilon(V_\varepsilon(s)) (t-s)}V_\varepsilon(s)+d_0\left\{1+\frac{1}{\varepsilon} b\left(\frac{d_1}{\varepsilon}\right)\right\}\notag\\
                &\text{hence, }V_\varepsilon(t)\leq e^{-\varepsilon_R (t-s)}V_\varepsilon(s)+d_0\left\{1+\frac{1}{\varepsilon(V_\varepsilon(s))} b\left(\frac{d_1}{\varepsilon(V_\varepsilon(s))}\right)\right\} \text{ for }  t\geq s \geq 0.
           \end{align}
Define, $W_R(t)=\sup_{V_\varepsilon(0)\leq R}V_\varepsilon(t)$. Then from \eqref{23} we have, 
         \begin{align}\label{31}
                 &W_R(t)\leq e^{-\varepsilon_R (t-s)}W_R(s)+d_0\left\{1+\frac{1}{\varepsilon(W_R(s))} b\left(\frac{d_1}{\varepsilon(W_R(s))}\right)\right\} \text{ for }  t\geq s \geq 0 \notag \\
              & \text{ when } t\to \infty, \quad  W_R(\infty)\leq d_0\left\{1+\frac{1}{\varepsilon(W_R(s))} b\left(\frac{d_1}{\varepsilon(W_R(s))}\right)\right\} \text{ for }  s \geq 0 \notag \\
              & \text{ since } \varepsilon=\sigma^{-1},\quad W_R(\infty)\leq d_0\left\{1+\sigma(W_R(s)) b\left(\sigma(W_R(s))\right)\right\} \text{ for }  s \geq 0 \notag \\
               &\implies 1 \leq \frac{d_0}{W_R(\infty)}+\frac{d_0 \cdot \sigma(s) b\left(\sigma(s)\right) }{W_R(\infty)}, \quad \text{where } \sigma(s)=\sigma(W_R(s)).
             \end{align}
Note that it is sufficient to prove that $W_R (\infty)$ is bounded uniformly [does NOT depend on $R$] in order to claim the uniform bound on $V_\epsilon$.\\
 We prove this by contradiction. If our claim is not true then $W_R(\infty)\to\infty$, and we consider the following two cases:
\begin{itemize}
    \item Case 1: When $\sigma(s) b\left(\sigma(s)\right) \leq M$ for some $M>0; \forall s>0$, then \eqref{31} implies $1 \leq 0$!! Hence $W_R(\infty)$ is uniformly bounded from above.
    \item Case 2: When $\sigma(s) b\left(\sigma(s)\right)$ is not uniformly bounded, we observe the following:
    {\allowdisplaybreaks
                  \begin{align}\label{32}
                       &[1+C\cdot E+c_0+d_0\left(1+\sigma \cdot b(d_1\sigma)\right)]^\gamma=\frac{d_3\sigma}{2} \notag \\
                            &  \implies \frac{(1+c_0+d_0)}{\sigma^{1/\gamma}}+\frac{C \cdot E}{\sigma^{1/\gamma}}+d_0\frac{\sigma \cdot b(d_1\sigma)}{\sigma^{1/\gamma}}=\left[\frac{d_3}{2}\right]^{\frac{1}{\gamma}}, \notag \\
                                 \intertext{\text{taking limit $\sigma \to \infty,  \quad \lim_{\sigma \to \infty}\frac{C \cdot E}{\sigma^{1/\gamma}}+d_0 \cdot \lim_{\sigma \to \infty}\sigma^{1-1/{\gamma}}b(d_1\sigma)=\left[\frac{d_3}{2}\right]^{\frac{1}{\gamma}}$, }}
                            \text{using \eqref{A3a} we }\text{conclude that,}   \lim_{\sigma \to \infty}&\frac{C \cdot E}{\sigma^{1/\gamma}}=\left[\frac{d_3}{2}\right]^{\frac{1}{\gamma}}.
                 \end{align}}
     Since $V_\varepsilon$ is bounded by $E$ from below by \eqref{V-E}, using the above limit in \eqref{32} we obtain, $\lim_{\sigma \to \infty}\frac{V_\varepsilon}{\sigma^{1/\gamma}}\geq d_\gamma$ for some $d_\gamma(=C_1\left[\frac{d_3}{2}\right]^{\frac{1}{\gamma}}-c)$. On the other hand we have from \eqref{31} that, when $W_R(\infty) \to \infty$, 
    \begin{equation}\label{33}
        1 \leq 0+\lim_{W_R(\infty) \to \infty}\frac{d_0 \cdot \sigma(s) b\left(\sigma(s)\right) }{W_R(\infty)}\leq \lim_{W_R(\infty) \to \infty}\frac{d_0 \cdot \left( \sigma(s)\textcolor{black}{/ \sigma(s)^{\frac{1}{\gamma}}}\right) b\left(\sigma(s)\right) }{W_R(\infty)/\textcolor{black}{\sigma(s)^{\frac{1}{\gamma}}}}\leq \frac{d_0 \cdot 0}{d_\gamma}=0.
    \end{equation}
  We used the increasing properties of the function $\sigma$ and \eqref{32}.
    This contradicts  $W_R(\infty)\to \infty$. 
    \end{itemize}
    Hence, $W_R(\infty)$ is uniformly bounded i.e., there exists $V^\ast$(independent of $R$) so that, $W_R(\infty)<V^\ast$ and dissipativity follows from \eqref{V-E}. 
    We construct the forward invariant absorbing set $\cB_0$ by $\cB_0 := \bigcup_{t \geq t_{\cB}}S_t \cB$, where $\cB=\{(u,v) \in \cH: \|(u,v)\|_{\cH}^2 \leq R\}$ is absorbing set i.e., $S_t\cB \subset \cB$. This completes the proof of  abstract Theorem \ref{0.5.1}.
 \end{proof}
 \ifdefined\xxxxxx
\section{Appendix}
In this section, we prove the inequality that we have been using throughout the paper.
\begin{lemma}
Let $u\!=\!u(x, y, t)\! \in\! \cH^1(\Omega)$, where $\Omega\!=\![0, \pi] \times [-l, l]$  such that,
$u=0$ on $\{0, \pi \}\times [-l, l]$, there exists a constant $C>0$ such that $\int_\Omega |u|^2 \leq C\cdot \int _\Omega |u_x|^2$.
\end{lemma}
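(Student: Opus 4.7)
The plan is to establish this one-dimensional Poincar\'e-type inequality by slicing $\Omega$ along $x$-lines at each fixed $y$, exploiting that $u$ vanishes at both endpoints $x=0$ and $x=\pi$. In fact, this is exactly the computation already carried out in Step~1 of the proof of Proposition~\ref{P:0}, where the constant $C = \pi^2$ was obtained; so the argument is essentially a reproduction of that reasoning, isolated here as a stand-alone lemma.

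First I would fix $y \in [-l,l]$ and, by the fundamental theorem of calculus together with the boundary condition $u(0,y)=0$, write
\[
u(x,y) = \int_0^x u_x(z,y)\,dz.
\]
Applying the Cauchy--Schwarz (H\"older) inequality to the right-hand side and using $x \leq \pi$ yields the pointwise estimate $|u(x,y)|^2 \leq \pi \int_0^\pi |u_x(z,y)|^2\,dz$.

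Next I would integrate successively in $y$ over $[-l,l]$ and in $x$ over $[0,\pi]$. Since the right-hand side above is independent of $x$, the final $x$-integration simply contributes a factor of $\pi$, producing
\[
\int_\Omega |u|^2\,dx\,dy \;\leq\; \pi^2 \int_\Omega |u_x|^2\,dx\,dy,
\]
so the inequality holds with explicit constant $C = \pi^2$.

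The argument is entirely routine and I do not anticipate any genuine obstacle. The only conceptual point worth emphasizing is the asymmetric role of the two parts of $\partial\Omega$: the inequality uses \emph{only} the hinged condition on $\Gamma_1 = \{0,\pi\}\times[-l,l]$, with no hypothesis whatsoever on the free edges $\Gamma_0 = [0,\pi]\times\{-l,l\}$. This is exactly what makes the constant depend on the $x$-extent of $\Omega$ alone, and what allows the inequality to be applied freely in the preceding arguments (in particular in the proof of Proposition~\ref{P:0} and in the interpolation step~\eqref{poincare} used in Lemma~\ref{L5.2}).
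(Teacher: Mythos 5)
Your proposal is correct and follows the same argument as the paper: the fundamental theorem of calculus from $u(0,y)=0$, H\"older's inequality, and successive integration in $y$ and then $x$, yielding the constant $C=\pi^2$ exactly as in Step~1 of the proof of Proposition~\ref{P:0} (and the appendix lemma). No differences worth noting.
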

\begin{proof}
By the fundamental theorem of calculus, 
\begin{align}
    &  \text{since, } u(0,y)=0, \quad\int_0^x u_x(z,y)dz=u(x,y) \quad
     \mbox{implies,} \quad |u(x,y)| \leq  \int_0^x |u_x(z,y)|dz \notag \\
    & \text{applying Holder's Inequality, one can get,}\quad |u(x,y)| \leq  \left( \int_0^x |u_x(z,y)|^2dz \right)^\frac{1}{2} \cdot \left( \int_0^x |1|^2dz \right)^\frac{1}{2} \notag \\
    & \mbox{hence,} \quad |u(x,y)|^2 \!\leq\! \pi \cdot \int_0^\pi |u_x(z,y)|^2dz.\quad \text{Integrating on both sides with respect to $y$ we get,} \notag \\
    & \int_{-l}^{l} |u(x,y)|^2 dy \leq {\pi} \cdot \int_{-l}^{l}\int_0^\pi |u_x(z,y)|^2dz dy={\pi}\cdot \int_\Omega |u_x|^2 d\Omega. \quad \text{Once again integrating both }\notag \\
    &\mbox{the sides with respect to $x$,} \int_{\Omega} |u(x,y)|^2 dy dx \leq \left( {\pi}\cdot  \int_\Omega |u_x|^2 d\Omega \right)\int_0^\pi dx = C \cdot \int_\Omega |u_x|^2 d\Omega \notag
    \end{align}
    Hence, the theorem is proved with $C=\pi^{2}$.
\end{proof}

\fi
{\bf Acknowledgments}

Research work of I. Lasiecka has been partially supported by the National Science Foundation, NSF Grant DMS-2205508.
% and NCN, Grant Opus, Agreement
%UMO-2023/49/B/ST1/04261. 

%% Bibliography
\bibliographystyle{abbrv} 
%\bibliography{ref.bib}

\end{document}